\newcommand{\vect}[1]{\boldsymbol{#1}}
\newcommand{\norm}[1]{\left\lVert#1\right\rVert}
\newcommand{\abs}[1]{\left\lvert#1\right\rvert}
\newcommand{\rank}[1]{\ensuremath{r\left(#1\right)}}
\newcommand{\Med}{\operatorname{\mathbb{M}ed}}
\newcommand{\bP}{\mathbb{P}}
\newcommand{\E}{\mathbb{E}}
\newtheorem{definition}{Definition}
\newtheorem{example}{Example}
\newtheorem{proposition}{Proposition}
\newtheorem{lemma}{Lemma}
\newtheorem{corollary}{Corollary}
\newtheorem{remark}{Remark}
\newenvironment{proof}[1][Proof]{\noindent\textbf{#1.} }{\hfill \rule{0.5em}{0.5em}}
\begin{document}

\title{Central subspace data depth}
\author[1]{Giacomo Francisci}
\author[1]{Claudio Agostinelli}

\affil[1]{Department of Mathematics, University of Trento, Trento, Italy \texttt{giacomo.francisci@unitn.it}, \texttt{claudio.agostinelli@unitn.it}}

\date{\today}

\maketitle

\begin{abstract}
  Statistical data depth plays an important role in the analysis of multivariate data sets. The main outcome is a center-outward ordering of the observations that can be used both to highlight features of the underlying distribution of the data and as input to further statistical analysis. An important property of data depth is related to symmetric distributions as the point with the highest depth value, the center, coincides with the point of symmetry. However, there are applications in which it is more natural to consider symmetry with respect to a subspace of a certain dimension rather than to a point, i.e.\ a subspace of dimension zero. We provide a general framework to construct statistical data depths which attain maximum value in a subspace, providing a center-outward ordering from that subspace. We refer to these data depths as central subspace data depths. Moreover, if the distribution is symmetric with respect to a subspace, then the depth is maximized at that subspace. We introduce general notions of symmetry about a subspace for distributions, study the properties of central subspace data depths and provide asymptotic convergence for the corresponding sample versions. Additionally, we discuss connections with projection pursuit and dimension reduction. An application based on custom data fraud detection shows the importance of the proposed approach and strengthens its potential. \\

\noindent \textbf{Keywords}: Central subspace, Custom data fraud detection, Depth-based dispersion measure, Symmetry with respect to a subspace, Statistical data depth.
\end{abstract}

\section{Introduction}
\label{sec:introduction}

Statistical data depths \citep{liu1990,zuoserfling2000} are bounded functions $d(\vect{x}, F)$ that are non-increasing along any ray departing from the center (i.e.\ the point with the highest depth) yielding a center-outward ordering of points in $\mathbb{R}^m$ with respect to a distribution function $F$. This allows the definition of central regions as sets of points with depth higher than a threshold that can be calibrated according to their probability content and thus provide quantiles for multivariate distributions \citep{zuoserfling2000c}. Using data depths several descriptive statistics can be defined, e.g.\ measures of outlyingness \citep{liu1993,liu1999} and robust location measures that coincide with the point of symmetry for symmetric distributions \citep{zuoserfling2000b}. In the last years, this concept has been extended to functional spaces \citep{chakraborty2014,nagy2016,nieto2016,gijbels2017}. Applications include but are not limited to classification and outlier detection \citep{ghosh2005,chen2008,li2012,hubert2015,hubert2017}. In some applications, it is more natural to consider symmetry with respect to a subspace of a certain dimension rather than to a point, i.e.\ a subspace of dimension zero. As an example, we consider European Union (EU) foreign trade data. EU member states report weights and prices of products imported to the member state from a state outside the EU. Of particular interest for European authorities are misdeclarations. Specifically, a low unit price may indicate undervaluation of the product to avoid import duties. Figure \ref{figure_pod33} (left) shows an example of such a data set, labeled as POD 33, see Section \ref{sec:data_analysis} for more details. We clearly observe a linear structure, with the data lying along straight lines. Therefore, in this context, it is more natural to consider a central subspace of dimension one (i.e., a straight line) rather than a single point.

We propose a notion of data depth that measures centrality with respect to a given central subspace of dimension $0 \leq p \leq m-1$, where the choice $p=0$ corresponds to the usual depth for points. We call this central subspace data depth. The orthogonal subspace of dimension $q=m-p$ is obtained via the minimization of a dispersion measure based on data depth \citep{romanazzi2009}. Early works on measures of scatter include those of \citet{wilks1960,bickel1976,bickel1979,eaton1982} and \citet{oja1983}, who provides a formal definition of scatter measure. \citet{giovagnoli1995} studies dispersion orderings and \citet{zuoserfling2000d} compares the scatter measures of \citet{bickel1976,bickel1979,eaton1982,oja1983} with the scatter measure given by the volume (i.e.\ Lebesgue measure) of depth regions \citep{liu1999}. Recently, \citet{wang2019} generalizes this scatter measure by allowing a general measure on $\mathbb{R}^m$. \citet{paindaveine2018} introduce halfspace depth for symmetric and positive definite matrices and provide, along the lines of \citet{liu1990} and \citet{zuoserfling2000}, desirable properties for a depth for scatter.

To highlight the advantage of our generalization we compare, in Figure \ref{figure_pod33}, the data depth ($p=0$, left panel) with the central subspace data depth ($p=1$, right panel), here $m=2$. All points are colored based on a gray scale according to their depth values. On the left panel, the data point of maximum depth is highlighted in yellow, while the data points corresponding to quantiles of order $0.5$ or lower are in green. Similarly, in the right panel, the straight line of maximum depth is in yellow. Since $m=2$ and $p=1$, the remaining dimension is $q=1$ and we can distinguish between lower and upper quantiles. Points lying in a line with quantiles of order smaller than $0.25$ or larger than $0.75$ are in green. Since the interest is in possible undervaluation of the prices, we also highlight, in blue the points lying on a line with quantiles of order between $0.95$ and $0.975$ and, in red, the points lying in a line with quantiles of order higher than $0.975$. Given the focus on the median price of the product and deviations from this value, the central subspace data depth (right panel) clearly provides a better ordering than the data depth (left panel). In particular, observations on the bottom-left and top-right close to the straight line of maximum depth possess a high depth value even if they are not in the center of the data cloud.
\begin{figure}
\centering{  
  \includegraphics[width=0.48\textwidth]{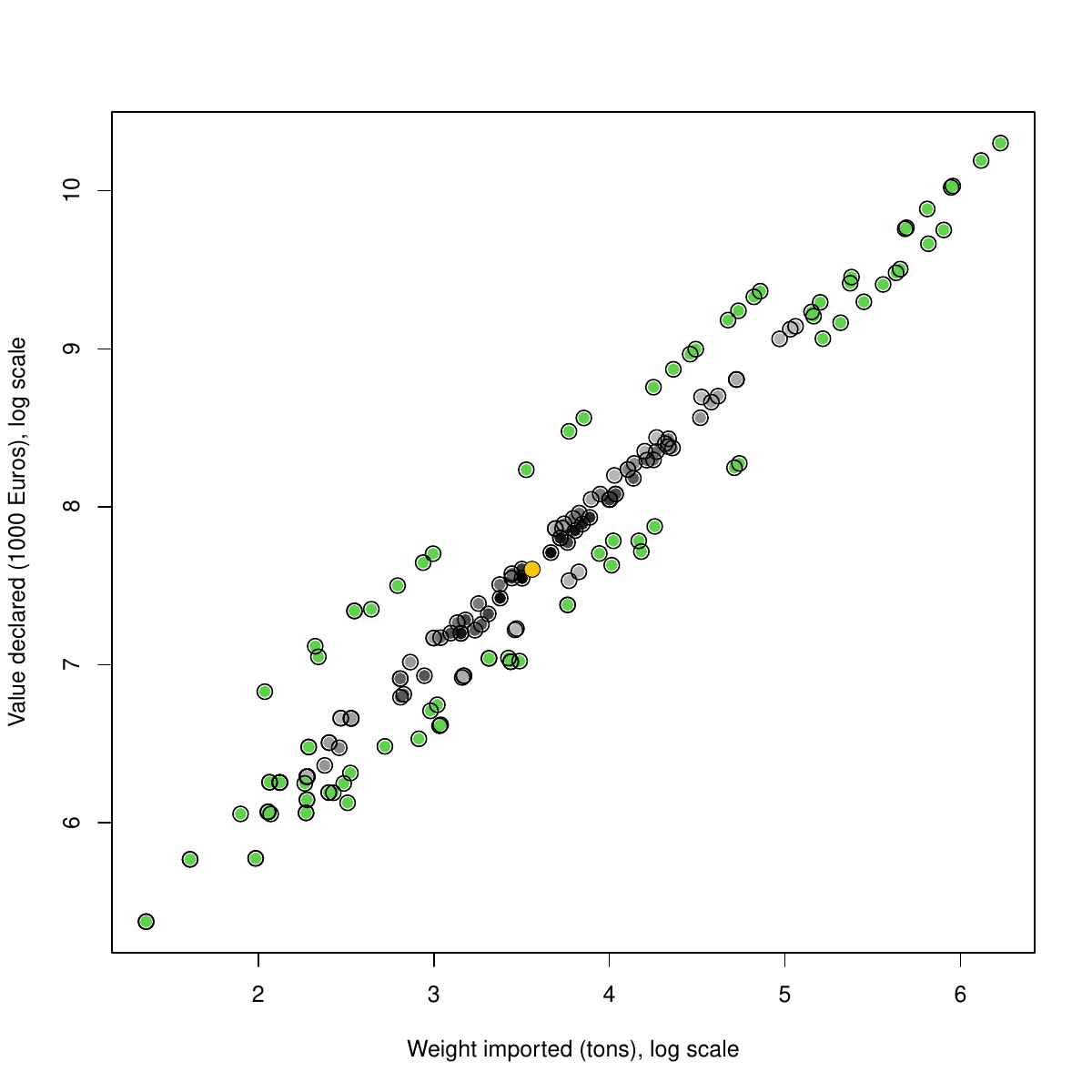}
  \includegraphics[width=0.48\textwidth]{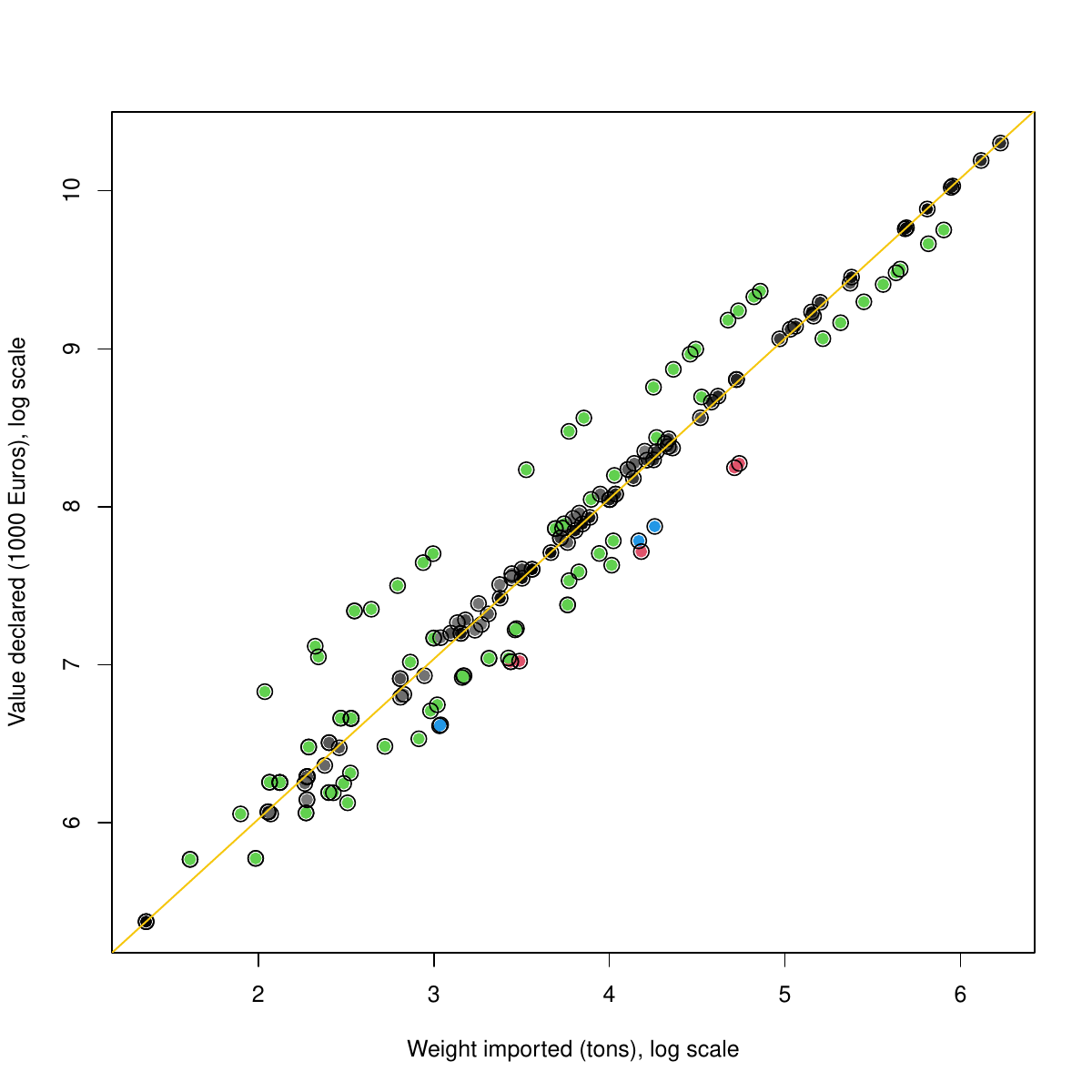}
}
\caption{POD 33 data set. Weights and prices are in log scale. Usual data depth  (left panel), central subspace data depth (right panel). Maximum depth is highlighted in yellow, value of the depth in gray scale for the central region with $0.5$ probability content. Points in green are in the outer region. In the right panel we further mark points with quantiles between $0.95$ and $0.975$ (blue) and those in a region with quantile order greater than $0.975$ (red). The analysis is performed using halfspace depth.}
\label{figure_pod33}
\end{figure}

Our proposed method is related to orthogonal regression in that it identifies an orthogonal direction in which the data are least dispersed. It also connects to dimension reduction and projection pursuit as the dispersion measure provides a measure of data spread in different directions. Projection pursuit is the systematic search for interesting linear projections of multivariate data \citep{friedman1974,jones1987}. These projections are obtained by optimizing a projection index. Classical examples of projection indices are entropy and kurtosis, and aim for departure from Gaussianity and spherical symmetry. A related method is Principal Component Analysis (PCA), which searches for directions capturing the largest variation in the data. Projection pursuit can also be used in regression and density estimation (see \citet{huber1985} and the references therein). Other approaches seek projections that reveal clusterings, for instance, linear discriminant analysis (LDA). In this case, departure from unimodality is of interest \citep{krause2005}. In this paper, we establishing equivalence of the minimization (and maximization) of the dispersion measure based on data depth and PCA for elliptically symmetric distributions. Whereas PCA relies on the existence of the covariance matrix, our approach can be fully non-parametric and suitable for distributions with arbitrary shape.

The paper is organized as follows. Section \ref{sec:symmetry} introduces the notation and extends the concepts of symmetry for multivariate distributions with respect to a point to symmetry with respect to a subspace. Some results are further investigated in Section \ref{sm:section_symmetry} of the Supplemental Material. Section \ref{sec:immersion} discusses the concepts of deeply immersion and central subspaces. Section \ref{sec:depthsubspaces} generalizes statistical data depths to central subspace data depths. Section \ref{sec:selection_of_q} discusses the choice of the optimal subspace dimension, including the choice of $p$ and $q$, and illustrates the method by applying it to some simulated data sets. Section \ref{sec:properties_of_sigma} contains several useful properties of the dispersion measure. Section \ref{sec:dimension_reduction} discusses dimension reduction techniques using the dispersion measure. Section \ref{sec:data_analysis} contains the analysis of real data sets with the aim of identifying potential custom fraud declarations and Section \ref{sec:concluding_remarks} provides some concluding remarks. In the Supplemental Material, Section \ref{sm:sec:depthproperties} recalls the usual properties of statistical data depth. Section \ref{sm:sec:dispersion} provides the theoretical background for the proposed generalization of statistical data depths. In particular, the properties of the dispersion measure introduced in Section \ref{sec:immersion} are studied including conditions for finiteness, continuity, existence and uniqueness of subspace minimizers of the dispersion measure as well as finite sample properties. Section \ref{sm:sec:further_examples} contains the analysis of some additional real data sets.

\section{Symmetry}
\label{sec:symmetry}

Let $\vect{X}$ be a random variable in $\mathbb{R}^{m}$ with distribution function $F$. For $0 \le p \le m-1$ and $q=m-p$, we denote by $S_p$ and $S_q$ orthogonal linear subspaces of $\mathbb{R}^m$ with dimension $p$ and $q$, respectively. The set of all such pairs $(S_p, S_q)$ is denoted by $\mathcal{S}_{p,q}$. In particular, $S_{p,\vect{\mu}} = S_p + \vect{\mu}$ and $S_{q,\vect{\mu}} = S_q + \vect{\mu}$ are orthogonal affine subspaces for all $\vect{\mu} \in \mathbb{R}^m$. This second representation is useful in the following for distributions $F$ having a center of symmetry $\vect{\mu} \neq \vect{0}$. An orthonormal basis of a subspace $S_{k}$ of dimension $k$ is represented by a $k \times m$ matrix $B_{k}$ with orthonormal row vectors. In particular, the $m \times m$ matrix $B_{k}^{\top} B_{k}$ is the orthogonal projection from $\mathbb{R}^m$ onto $S_k$. For simplicity, we use in the following the matrix $B_{k}$ instead of the projection $B_{k}^{\top} B_{k}$. Various notions of symmetry have been proposed for multivariate random variables $\vect{X}$ in $\mathbb{R}^m$ \citep{serfling2006}. The most general notion is halfspace symmetry, which is defined as follows.
\begin{definition}[Halfspace symmetry]
$\vect{X}$ is \textbf{halfspace symmetric} about $\vect{\mu} \in \mathbb{R}^m$ if
\begin{equation*}
  \bP(\vect{X} \in H_{\vect{\mu}}) \ge \frac{1}{2}
\end{equation*}
for any closed halfspace $H_{\vect{\mu}}$ with $\vect{\mu}$ on the boundary.
\end{definition}
Other useful notions of symmetry with respect to a point (in decreasing order of generality) are: angular, central, elliptical, and spherical symmetry. For the definition of these notions of symmetry we refer to Subsection \ref{sm:subsec:symmetries} of the Supplemental Material. We extend these concepts of symmetry to a more general setting by introducing symmetry with respect to a subspace.
\begin{definition}[Symmetry with respect to a subspace]
\label{definition_symmetry_in_a_subspace}
$\vect{X}$ is symmetric with respect to the subspace $S_p$ if the random variable $\vect{Y} = B_q \vect{X}$ is symmetric in $\mathbb{R}^q$.
\end{definition}
The definition entails that the random variable $B_q^\top \vect{Y}$ exhibits symmetry within $S_q$. This symmetry is independent of the specific orthonormal bases chosen for $S_p$ and $S_q$, as the property is invariant under rigid-body transformations. Furthermore, symmetry is preserved under the projection $B_{q}$. Specifically, if $\vect{X}$ is symmetric in $\mathbb{R}^m$ at $\vect{\mu}$, then $\vect{Y} = B_q \vect{X}$ is also symmetric in $\mathbb{R}^q$ at $B_q \vect{\mu}$ or all aforementioned notions of symmetry. Detailed proofs can be found in Propositions \ref{sm:prop:sym:sub:spherical}--\ref{sm:prop:sym:sub:halfspace} of the Supplemental Material. 

\section{Deeply immersion, central subspace and data depth}
\label{sec:immersion}

In this section we discuss the concepts of deeply immersion and central subspace, which refer to the subspace where the distribution is least dispersed and its complement. This framework allows us to define a data depth that is maximized at the central subspace, and it is therefore referred to as the central subspace data depth (see Section \ref{sec:depthsubspaces} below). It is well known that statistical data depths $d(\cdot , \cdot)$ extend the concept of location (median) in a univariate space to the multivariate setting (multivariate median, i.e.\ deepest point). However, data depths can also serve as a scalar dispersion measure for multivariate random variables. An example of such a dispersion measure is given below.

\begin{definition}[\citet{romanazzi2009}]
\label{def:dispersion}
The dispersion measure $\sigma(F)$ is the (Lebesgue) integral of the data depth $d(\vect{x}, F)$, that is
\begin{equation*}
\sigma(F) = \int_{\mathbb{R}^m} d(\vect{x}, F) \ d\vect{x}.
\end{equation*}
\end{definition}
Notice that $\sigma(F)$ does not require the evaluation of any location parameter. See Section \ref{sm:sec:dispersion} in the Supplemental Material for a more complete treatment of dispersion measures based on statistical data depths. Dispersion measures can be used to identify subspaces where a random variable shows low variability. This leads to the following definition.

\begin{definition}[Deeply immersion] \label{def:DeeplyImmersion}
Let $\vect{X}$ be a random variable in $\mathbb{R}^m$ with distribution function $F$. For $(S_p, S_q) \in \mathcal{S}_{p,q}$ consider the random variable $\vect{Y} = B_q \vect{X} \sim F_{B_q}$ and
\begin{equation*}
(\hat{S}_p, \hat{S}_q) = \arg\min_{(S_p, S_q) \in \mathcal{S}_{p,q}} \sigma(F_{B_q}).
\end{equation*}
We say that $\vect{X}$ is deeply immersed in the subspace $\hat{S}_q$ with respect to the dispersion measure $\sigma(\cdot)$.
\end{definition}
The subspaces $(\hat{S}_p, \hat{S}_q)$ are independent of the specific bases $B_p$, $B_q$ used to perform projections. It should be noted, however, that the subspace $\hat{S}_q$ which minimizes $\sigma(F_{B_q})$ is not necessarily unique. Let $\hat{\mathcal{S}}_{p,q}$ denote the set of all such minimizers of $\sigma(F_{B_q})$ and define $\tilde{\mathcal{S}}_p = \cap_{(S_p,S_q) \in \hat{\mathcal{S}}_{p,q}} S_p$. Deeply immersion identifies a subspace $S_q$, where the dispersion of $\vect{X}$ is minimized. The corresponding orthogonal subspace, $S_p$, is referred to as the central subspace.

\begin{definition}[Central subspace]
A subspace $S_p$ is a \textbf{central subspace} of dimension $p$ if there exists $S_q \subset \mathbb{R}^m$ such that $(S_p,S_q) \in \hat{\mathcal{S}}_{p,q}$. The subspace $\tilde{\mathcal{S}}_p$ is the \textbf{core of the central subspaces}. 
\end{definition}

\begin{example}[Spherical symmetry]
\label{exe:spherically}
If $\vect{X}$ is spherically symmetric about $\vect{\mu} \in \mathbb{R}^m$, then any subspace $S_{1}$ of $\mathbb{R}^m$ is a central subspace of dimension $1$ and $\vect{0}$ is the core of the central subspaces. The subspaces $S_{1,\vect{\mu}}$ are also  central subspaces and their core is $\vect{\mu}$. For a convenient interpretation of these subspaces we prefer this second parametrization.
\end{example}

\begin{example}[Elliptical symmetry]
\label{exe:elliptically}
If $\vect{X}$ is elliptically symmetric about $\vect{\mu} \in \mathbb{R}^m$ and the largest eigenvalue of its covariance matrix has multiplicity one, then, its first principal component is the central subspace $\hat{S}_{1,\vect{\mu}}$ of dimension one centered at $\vect{\mu}$, and it corresponds to the core of the central subspaces of dimension one.
\end{example}
Proposition \ref{proposition:pca} in Section \ref{sec:dimension_reduction} formalizes Examples \ref{exe:spherically}-\ref{exe:elliptically} and generalizes them to subspaces of arbitrary dimension. In the next section, we introduce a class of statistical data depths for which the center is an appropriate central subspace, referred to as central subspace data depths.

\subsection{Central subspace data depths}
\label{sec:depthsubspaces}

Consider a subspace $S_{q}$ of dimension $q$ with basis $B_q$. We define the class $\mathcal{S}_{B_q}$ of $p$-dimensional subspaces in $\mathbb{R}^m$ orthogonal to $S_q$ as $\mathcal{S}_{B_q}=\{ S_{B_q}(\vect{y}) \, : \, \vect{y} \in \mathbb{R}^q \}$, where $S_{B_q}(\vect{y})=B_q^{-1} \vect{y}=\{ \vect{x} \in \mathbb{R}^m \, : \, \vect{y} = B_q \vect{x} \}$. Note that $\mathcal{S}_{B_q}$ is independent of the specific basis $B_q$ chosen for $S_{q}$. Indeed, any alternative basis can be expressed as $U B_q$, for some orthogonal $q \times q$ matrix $U$, and $S_{UB_q}(U\vect{y})=S_{B_q}(\vect{y})$ for $\vect{y} \in \mathbb{R}^q$. Using the previous notation we have $S_{p}=S_{B_q}(\vect{0})$ and $S_{p,\vect{\mu}}=S_{B_q}(B_q\vect{\mu})$. As it is easy to see from Examples \ref{exe:spherically}-\ref{exe:elliptically}, the set of central subspaces is not invariant under affine transformations. Consequently, it is appropriate to require the following properties for a central subspace data depth $d_S(\cdot , \cdot)$.

\begin{enumerate}[label=(\textbf{P\arabic*})]
\item \label{PropDepthAffineInvariance} Invariance. \\
(i) Location and scale invariance: $d_S( S_{B_q}(\vect{y}), F) = d_S(a S_{B_q}(\vect{y}) + \vect{b}, F_{a \vect{X}+\vect{b}})$, for any $a \neq 0$ and $\vect{b} \in \mathbb{R}^m$; \\
(ii) Rotation and reflection invariance: $d_S(S_{B_q}(\vect{y}), F) = d_S(U S_{B_q}(\vect{y}), F_{U \vect{X}})$, for any orthogonal matrix $U$.
\item \label{PropDepthMaximalityAtCenter} Maximality at center. If $F$ is symmetric in the subspace $S_q$, that is, there exists $\vect{\nu} \in \mathbb{R}^q$ such that $\vect{Y}=B_q \vect{X}$ is symmetric about $\vect{\nu}$, then, for all $\vect{y} \in \mathbb{R}^q$,
\begin{equation*}
d_S(S_{B_q}(\vect{y}),F) \leq d_{S}(S_{B_q}(\vect{\nu}),F).
\end{equation*}
\item \label{PropDepthMonotonicity} Monotonicity. If $\vect{\nu} \in \mathbb{R}^q$ satisfies $d_S(S_{B_q}(\vect{y}),F) \leq d_{S}(S_{B_q}(\vect{\nu}),F)$, for all $\vect{y} \in \mathbb{R}^q$, then
\begin{equation*}
d_S(S_{B_q}(\vect{y}),F) \leq d_{S}(S_{B_q}(\vect{\nu})+\alpha (S_{B_q}(\vect{y})-S_{B_q}(\vect{\nu})),F), \quad \alpha \in [0,1].
\end{equation*}
\item \label{PropDepthZero} Approaching zero.
\begin{equation*}
d_S(S_{B_q}(\vect{y}),F) \rightarrow 0, \text{ as } \norm{\vect{y}} \rightarrow \infty.
\end{equation*}
\end{enumerate}
In the special case where $p=0$ and $q=m$, properties \ref{PropDepthMaximalityAtCenter}--\ref{PropDepthZero} correspond exactly to the classical statistical data depth properties \ref{sm:PropDepthMaximalityAtCenter}--\ref{sm:PropDepthZero} in Section \ref{sm:sec:depthproperties} of the Supplemental Material. Furthermore, Property \ref{sm:PropDepthAffineInvariance} is satisfied specifically for translation, rotation, reflection, and scaling, rather than for all general affine transformations.

\begin{definition}[Central subspace data depths] \label{definition_depth_for_subspaces}
Let $\vect{X}$ be a random variable in $\mathbb{R}^m$ with distribution $F$ and $d(\cdot , \cdot)$ a statistical data depth in $\mathbb{R}^q$. The depth of $S_{B_q}(\vect{y}) \in \mathcal{S}_{B_q}$ with respect to $F$ is
\begin{equation*}
d_S(S_{B_q}(\vect{y}),F) = d(\vect{y},F_{B_q}).
\end{equation*}
If $B_q$ is the minimizer of $\sigma(F_{B_q})$ in Definition \ref{def:DeeplyImmersion}, we call $d_S(\cdot , \cdot)$ a central subspace data depth.
\end{definition}
It follows immediately from Definition \ref{definition_depth_for_subspaces} and the properties of statistical data depths (see \ref{sm:PropDepthAffineInvariance}--\ref{sm:PropDepthZero} in Section \ref{sm:sec:depthproperties} of the Supplemental Material) that, for any $p$ and $q$, $d_S(\cdot , F)$ satisfies \ref{PropDepthAffineInvariance}--\ref{PropDepthZero}. In particular, if $\vect{\nu} = \arg\max_{\vect{y} \in \mathbb{R}^q} d(\vect{y},F_{B_q})$ is the point in $\mathbb{R}^q$ with maximum depth, then $S_{B_q}(\vect{\nu})$ is the subspace with maximum depth with respect to $d_S(\cdot , F)$. Furthermore, if $F$ is symmetric about $\vect{\mu}$ then $\vect{\nu} = B_q \vect{\mu} = \arg\max_{\vect{y} \in \mathbb{R}^q} d(\vect{y},F_{B_q})$ as $F_{B_q}$ is symmetric about $\vect{\nu}$.

Several choice are possible for the depth function $d(\cdot , \cdot)$. Among the most popular are the halfspace and simplicial depths \citep{zuoserfling2000,mosler2022}. We recall their definition hereafter. The halfspace depth \citep{tukey1975} of a point $\vect{x} \in \mathbb{R}^m$ with respect to the distribution $F$ is given by
\begin{equation*}
d_{H}(\vect{x},F)=\inf_{\vect{u} \in S^{m-1}} \bP(\vect{X} \in H_{\vect{x},\vect{u}}),
\end{equation*}
where $S^{m-1}$ is the unit sphere in $\mathbb{R}^m$ and $H_{\vect{x},\vect{u}} = \{ \vect{y} \in \mathbb{R}^m : \, \langle \vect{u}, \vect{y} \rangle \leq \langle \vect{u}, \vect{x} \rangle \}$ is the closed halfspace with boundary point $\vect{x} \in \mathbb{R}^m$ and outer normal $\vect{u} \in S^{m-1}$. The simplicial depth \citep{liu1990} of $\vect{x}$ with respect to $F$ is
\begin{equation*}
d_\Delta(\vect{x},F) = \bP(\vect{x} \in \Delta[\vect{X}_1,\dots,\vect{X}_{m+1}]),
\end{equation*}
where $\vect{X}_i$ are independent and identically distributed (i.i.d.) with distribution $F$ and $\Delta[\vect{x}_1,\dots,\vect{x}_{m+1}]$ is the closed simplex with vertices $\vect{x}_1,\dots,\vect{x}_{m+1}$.

In the next subsection, we discuss in detail the practical issue of choosing the optimal subspace dimension $p$ and $q=m-p$. Several properties of the central subspace data depth and the dispersion measure $\sigma(\cdot)$ are discussed in Section \ref{sec:properties_of_sigma}.

\subsection{Selection of the optimal subspace dimension} \label{sec:selection_of_q}

In many phenomena, data tend to exhibit some form of symmetry. To extract as much information as possible, one should look for optimal subspace dimension such that, in the orthogonal subspace, the projected random variable exhibits spherical symmetry. This indicates that, within our approach, no further information can be extracted. Let $\mathcal{X}=\{\vect{X}_{1}, \vect{X}_{2}, \dots, \vect{X}_{n}\}$ be a sample of i.i.d.\ random variables in $\mathbb{R}^{m}$ with distribution function $F$. In order to find the optimal dimensions $p^{\ast}$ and $q^{\ast}=m-p^{\ast}$ we perform recursive uniformity test to verify the null hypothesis of spherical symmetry. We begin by taking $p=1$ and draw with repetition $k$ subsamples of equal size $s<n$ from $\mathcal{X}$ independently and uniformly at random. For each subsample $j \in \{1,\dots,k\}$ we compute an optimal direction $B_{p}^{j}=B_{1}^{j}$ as in Definition \ref{def:DeeplyImmersion}. If the distribution $F$ is spherically symmetric, there is no preferred direction and we expect $B_{1}^{j}$ to be uniformly distributed on the unit sphere $S^{m-1}$. To check this, we perform a test of uniformity for the $k$ directions obtained. Several test for uniformity on the unit sphere are available (see for instance \citet{garciaportugues2018,garciaportugues2023,banerjee2024}). We use the multivariate Raylegh test. Rayleigh test statistic is given by $R_{k} = k m \lVert \bar{B} \rVert^{2}$, where $\bar{B}$ is the average of $B_{1}^{j}$, and under the null hypothesis it is asymptotically distributed as $\chi_{m}^{2}$. If the null hypothesis is accepted, we stop and set $p^{\ast}=0$. If the null hypothesis is rejected, we compute the optimal matrices $B_{p}=B_{1}$ and $B_{q}=B_{m-1}$ and obtain a new sample $\tilde{\mathcal{X}}=B_{m-1} \mathcal{X}$ in $\mathbb{R}^{m-1}$. We now perform the uniformity test on the projected sample $\tilde{\mathcal{X}}$. If the null hypothesis is accepted, we stop and set $p^{\ast}=1$, whereas if it is rejected we set $p=2$ and find the optimal matrices $B_{p}=B_{2}$ and $B_{q}=B_{m-2}$ using the full sample $\mathcal{X}$. We repeat the test on the projected sample $\tilde{\mathcal{X}}=B_{m-2} \mathcal{X}$ and continue until the null hypothesis is accepted. Thus, the optimal dimension is the first nonnegative integer $p^{\ast} \in \{0, \dots, m-1 \}$ for which the uniformity test is accepted for the projected sample $\tilde{\mathcal{X}}$ in $R^{q^{\ast}}$, where $q^{\ast}=m-p^{\ast}$. If all the tests are rejected then we set $p^\ast = m-1$ and $q^\ast=1$. The optimal matrices $B_{p^{\ast}}$ and $B_{q^{\ast}}$ are then given by Definition \ref{def:DeeplyImmersion}. To illustrate the procedure, we examine three simulation scenarios corresponding to different distribution functions $F$. Specifically, we set $n=100$, $k=500$, and $s=20$ and consider (i) a multivariate normal in $\mathbb{R}^{3}$ with independent components and standard deviations equal to $1$, $1$, and $5$, respectively; (ii) a multivariate distribution in $\mathbb{R}^{3}$ with independent components, where the first and third components are standard normal's and the second component is uniformly distributed on the interval $[0,0.1]$; and (iii) a multivariate normal in $\mathbb{R}^{5}$ with independent components and standard deviations equal to $1$, $1$, $1$, $5$, and $5$, respectively. The analysis is performed using halfspace depth.

We begin with scenario (i). The hypothesis of spherical symmetry on the full sample $\mathcal{X}$ is rejected by Raylegh test with a p-value smaller than $0.0001$. The optimal matrices are given by
\begin{equation} \label{optimal_matrices_i}
  B_{p^{\ast}}=B_{1} \approx \begin{pmatrix}
    -0.029 \\
    0.021 \\
    -0.999
\end{pmatrix}
\text{ and } B_{q^{\ast}}=B_{2} \approx \begin{pmatrix}
    0.021 & -0.999 \\
    1.000 & 0.021 \\
    0.021 & 0.029
\end{pmatrix},
\end{equation}
which are very close to the true subspaces bases. The hypothesis of spherical symmetry on the projected sample $\tilde{\mathcal{X}} = B_{2} \mathcal{X}$ is accepted with a p-value of about $0.907$. Thus, the optimal dimensions are $p^{\ast}=1$ and $q^{\ast}=2$ and the optimal matrices are given by \eqref{optimal_matrices_i}. The sample $\mathcal{X}$ is plotted in Figure \ref{simulation_i}, where points are colored based on a gray scale according to their (central subspace) depth values. The region of maximum depth is in yellow.
\begin{figure} \label{simulation_i}
\centering{    
  \includegraphics[width=0.32\textwidth]{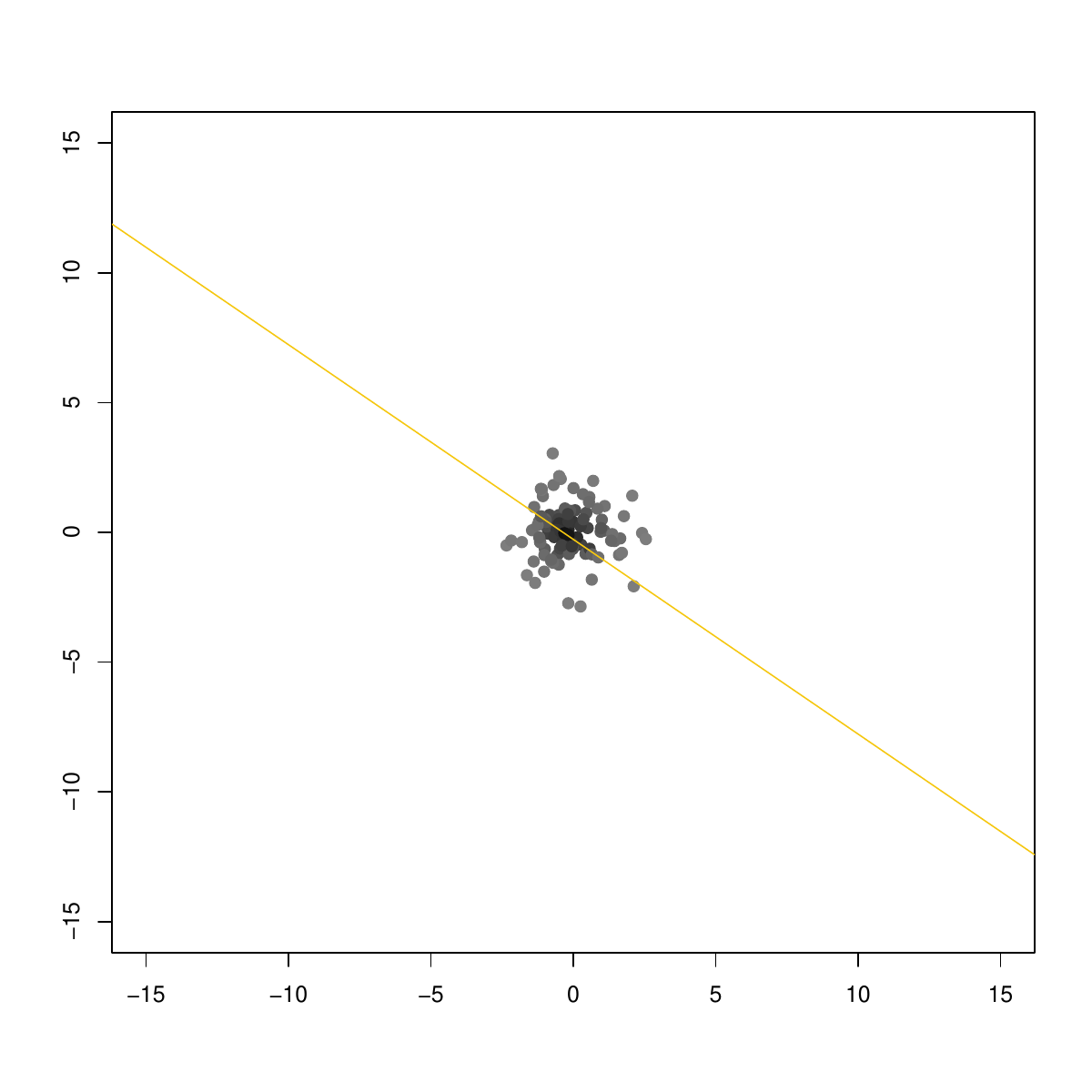}
  \includegraphics[width=0.32\textwidth]{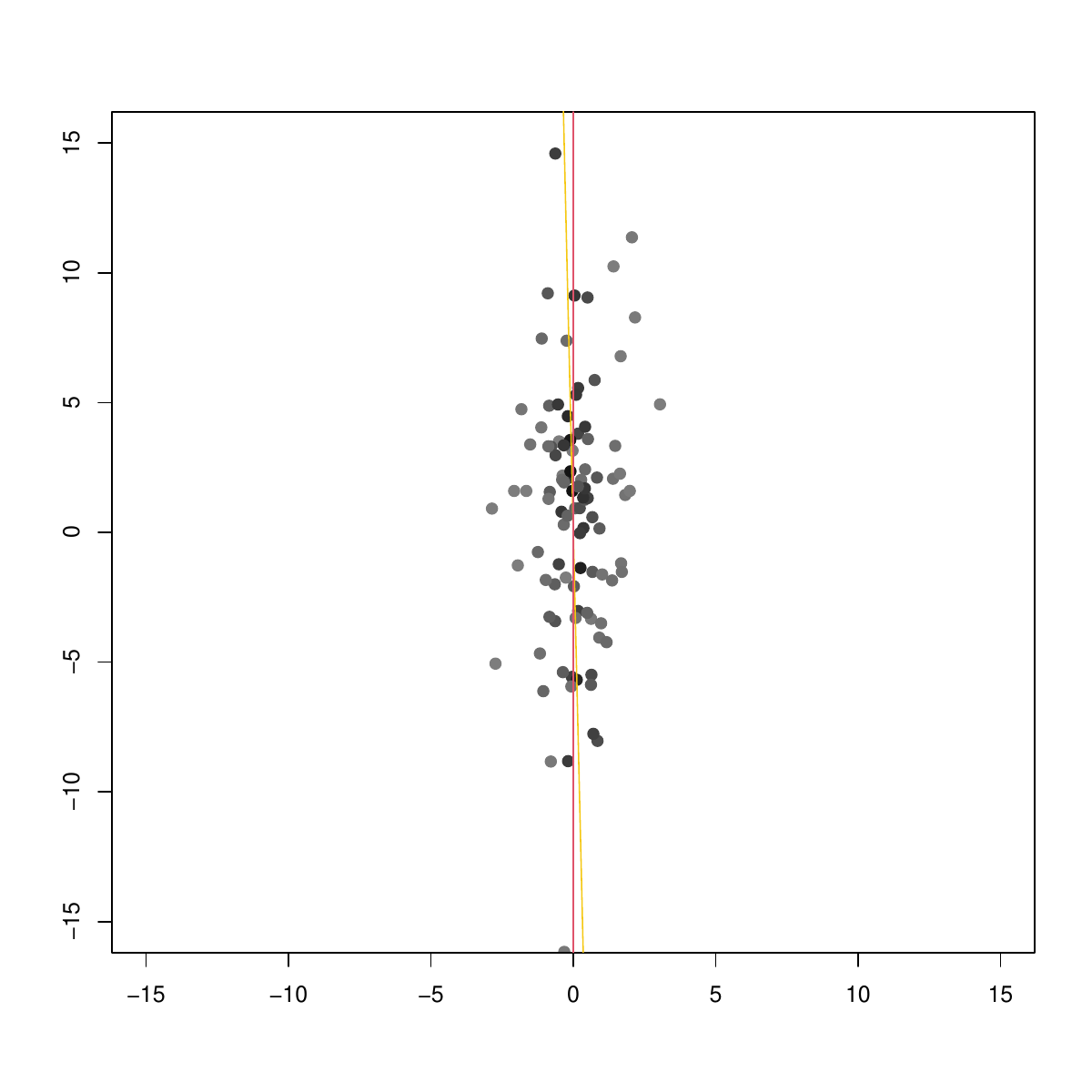}
  \includegraphics[width=0.32\textwidth]{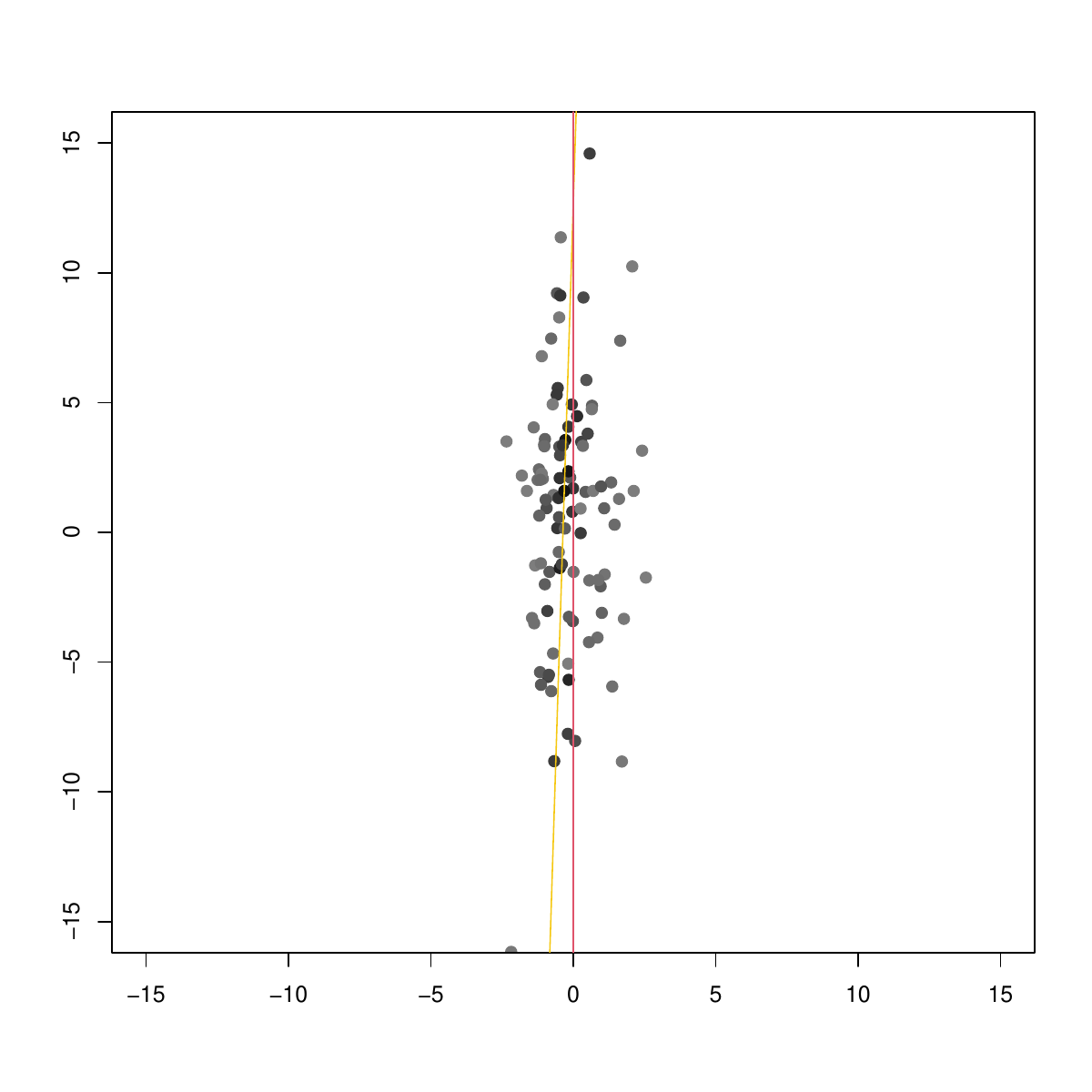}
}
\caption{Sample points in simulation scenario (i). First and second coordinate (left panel), second and third coordinate (central panel), and first and third coordinate (right panel). The region of maximum depth is highlighted in yellow. The true maximal direction is plotted in red.}
\label{simulation_i}
\end{figure}
Turning to scenario (ii), Raylegh test on the full sample $\mathcal{X}$ yields a p-value of $0.041$, whereas testing on the projected sample $\tilde{\mathcal{X}}$ in $\mathbb{R}^{2}$ gives the p-value $0.454$. Thus, the optimal dimensions are $p^{\ast}=1$ and $q^{\ast}=2$. The optimal matrices are given by
\begin{equation*} \label{optimal_matrices_ii}
  B_{p^{\ast}} \approx \begin{pmatrix}
    -0.537 \\
    0.003 \\
    -0.843
\end{pmatrix}
\text{ and } B_{q^{\ast}} \approx \begin{pmatrix}
    0.003 & 0.843 \\
    1.000 & -0.002 \\
    0.002 & 0.537
\end{pmatrix}.
\end{equation*}
The sample points and their depth values are displayed in Figure \ref{simulation_ii}.
\begin{figure} \label{simulation_ii}
\centering{    
  \includegraphics[width=0.32\textwidth]{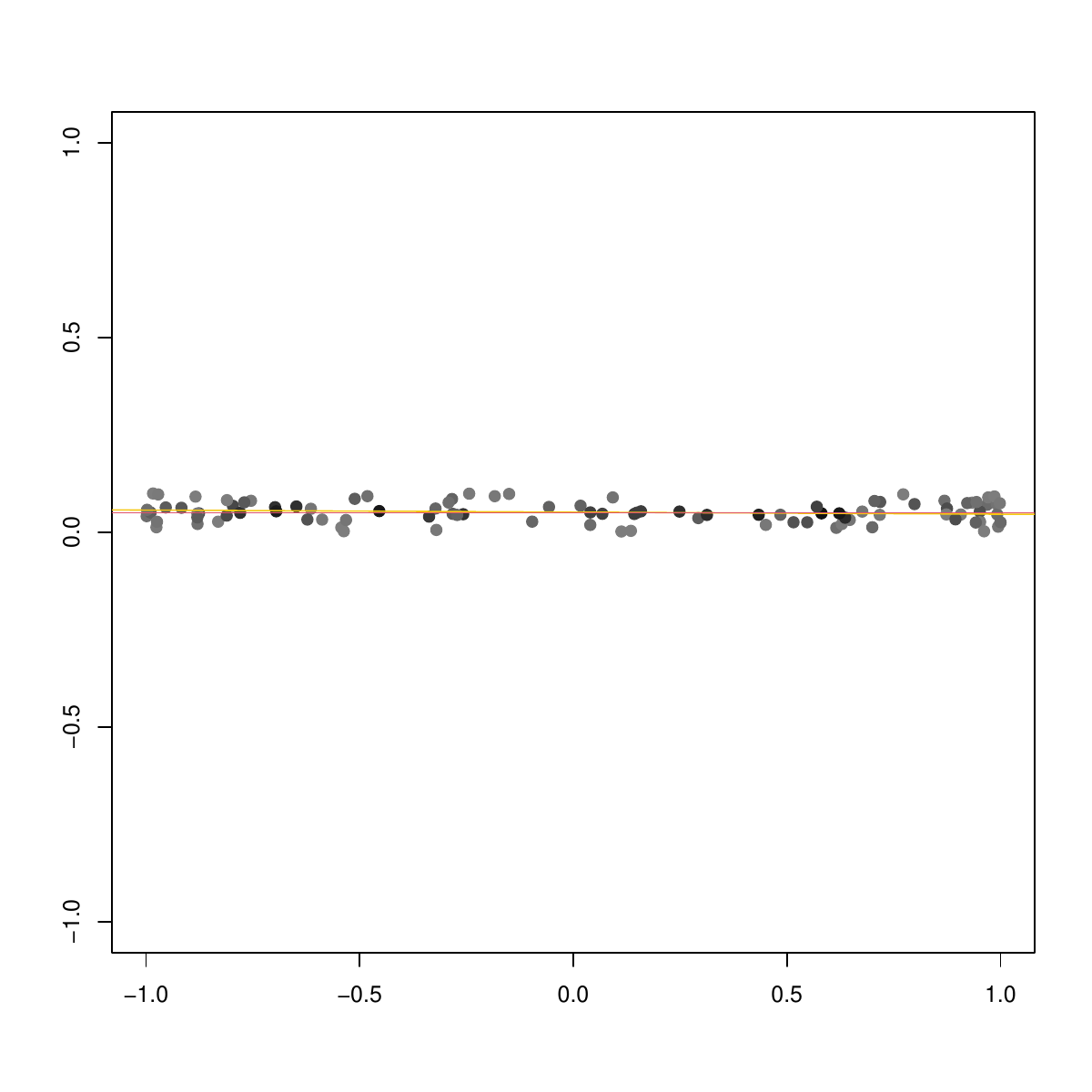}
  \includegraphics[width=0.32\textwidth]{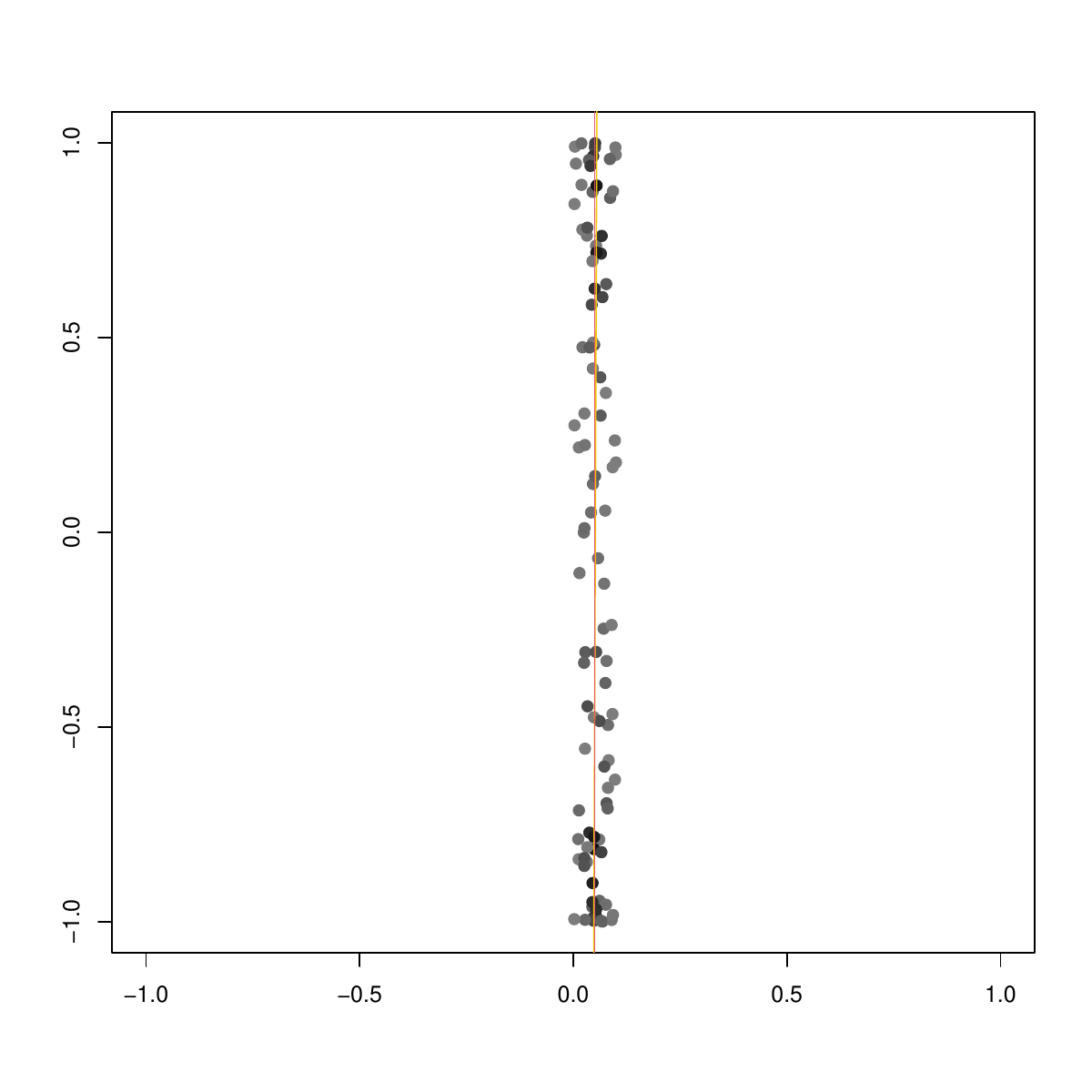}
  \includegraphics[width=0.32\textwidth]{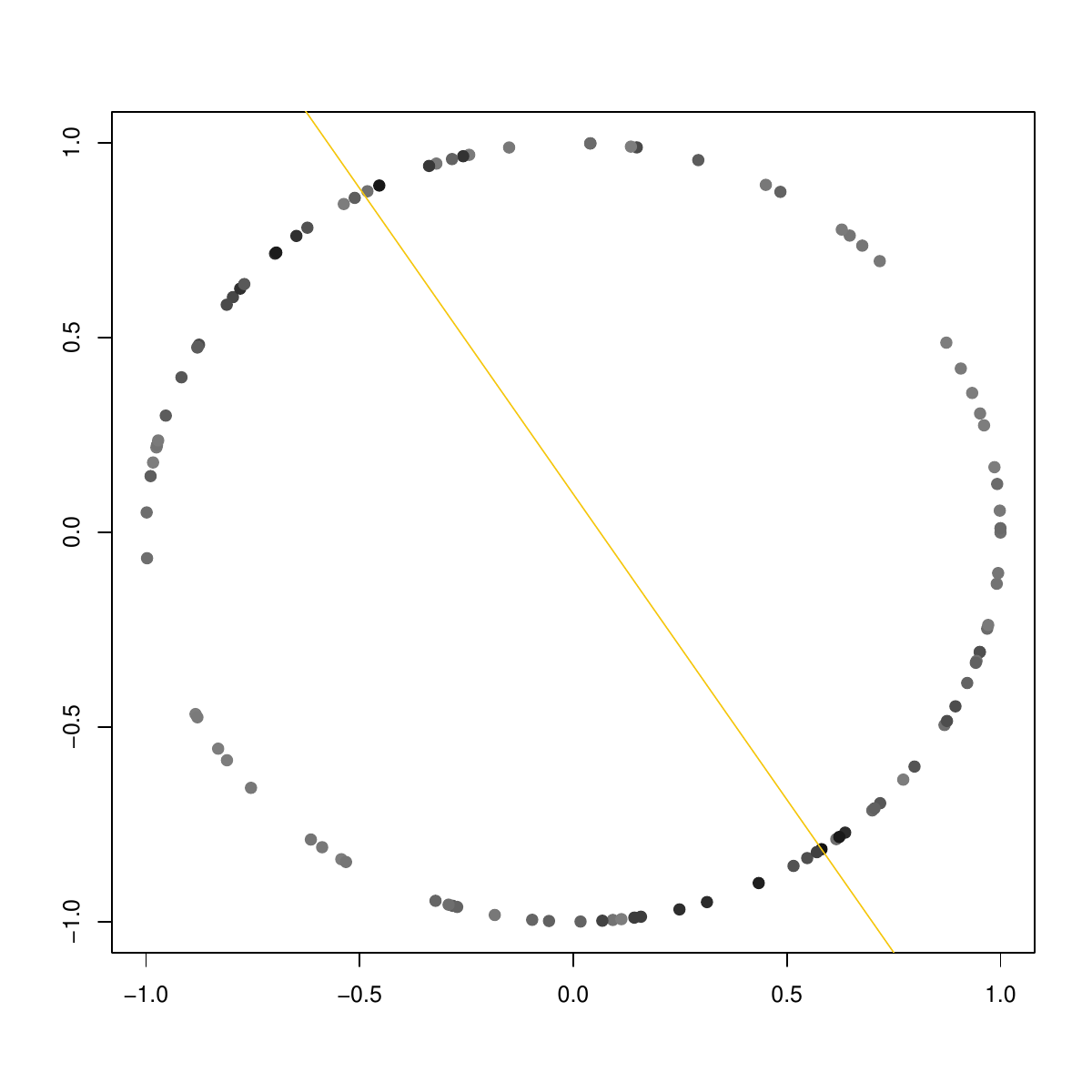}
}
\caption{Sample points in simulation scenario (ii). First and second coordinate (left panel), second and third coordinate (central panel), and first and third coordinate (right panel). The region of maximum depth is highlighted in yellow. The true maximal direction is plotted in red.}
\label{simulation_ii}
\end{figure}
Finally, in scenario (iii), the hypothesis of spherical symmetry on the full sample $\mathcal{X}$ is rejected by Raylegh test with a p-value smaller than $0.0001$. Testing on the projected sample $\tilde{\mathcal{X}}$ in $\mathbb{R}^{4}$ gives a p-value smaller than $0.0001$ and the first accepted test is obtained for $p^{\ast}=2$ and $q^{\ast}=3$, where the p-value is $0.079$. The optimal matrices are given by
\begin{equation*} \label{optimal_matrices_ii}
  B_{p^{\ast}} \approx \begin{pmatrix}
    0.112 & -0.015 \\
    -0.021 & -0.051 \\
    -0.007 & 0.003 \\
    0.124 & 0.991 \\
    -0.986 & -0.126
\end{pmatrix}
\text{ and } B_{q^{\ast}} \approx \begin{pmatrix}
    0.007 & -0.158 & 0.981 \\
    0.003 & -0.986 & -0.157 \\
    1.000 & 0.003 & -0.005 \\
    0.003 & 0.052 & -0.021 \\
    -0.006 & 0.009 & 0.112
\end{pmatrix}.
\end{equation*}

\subsection{Some properties of the dispersion measure and of the central subspace data depth} \label{sec:properties_of_sigma}
We summarize several results regarding the properties of the dispersion measure $\sigma(\cdot)$ and of the central subspace data depth $d_S(\cdot,\cdot)$ when the underlying data depth is either the halfspace or the simplicial depth. Detailed statements and proofs are in the Supplemental Material. The first question is when the dispersion measure $\sigma(\cdot)$ is finite. Propositions \ref{sm:Prop1dDepFin} and \ref{sm:PropDispFinHalfSimp} show that the dispersion measure is finite whenever a moment of order greater than $1$ is finite. Several distributions with infinite variance have a finite dispersion measure based on $\sigma(\cdot)$, e.g., \citet{matsui2016} shows that infinitely divisible distributions (stable distributions, Pareto distribution, geometric stable distribution, Linnik distribution and compound Poisson distributions) have a fractional moment of order smaller than $2$ which is finite. The multivariate t-distribution with $\nu > 0$ degrees of freedom has finite fractional moments of order smaller than $\nu$; of interest is the case $1 < \nu \le 2$ where the variance is infinite but the dispersion measure $\sigma(\cdot)$ is finite (see Proposition \ref{sm:proposition_finiteness_of_fractional_moments_for_t-distribution}).

The second property concerns the continuity of the dispersion measure with respect to the probability distribution. This point is treated in Subsection \ref{sm:subsection_continuity_of_dispersion_measure}. Corollary \ref{sm:CorPolDecay} shows that the dispersion measure is continuous for polynomially decaying probability distributions. Furthermore, continuity is preserved after projection onto subspaces, as it is shown in Corollary \ref{sm:CorPolDecayHalfSimp} of Subsection \ref{sm:subsection_dispersion_measure_in_subspace}. Continuity holds also for empirical probability measures as it is shown in Corollaries \ref{sm:CorPolDecayContSigmatPempHalfSimp} and \ref{sm:CorPolDecayContSigminPempBqHalfSimp} of Subsection \ref{sm:subsection_asymptotic_properties_of_dispersion_measure}.

Third, given a random variable $\vect{X}$ we investigate if it is deeply immersed in a subspace (see Definition \ref{def:DeeplyImmersion}). Existence of such subspace is studied in Subsection \ref{sm:subsection_existence_and_uniqueness_of_minimizers}. If the probability measure of $\vect{X}$ is absolutely continuous with finite mean and decays polynomially, then by Corollary \ref{sm:CorMaxMinDispMeas}  there exist subspaces at which the dispersion measure attains maximum and minimum. When the optimal subspace is unique then Proposition \ref{sm:PropUniqueMaxMinDispMeas} ensures that maximum and minimum of the empirical dispersion measure converge almost surely to their population values.

Fourth, we prove that for elliptically symmetric distributions $F$ the minimization procedure in Definition \ref{def:DeeplyImmersion} is equivalent to Principal Components Analysis (PCA) in the sense that the space $\hat{S}_q$ in Definition \ref{def:DeeplyImmersion} is generated by the last $q$ principal directions obtained from PCA. We also show that this is equivalent to finding the $p$-dimensional subspace $\hat{S}_p$ in which the distribution is more dispersed, that is, finding
\begin{equation} \label{maximization}
  (\hat{S}_p, \hat{S}_q) = \arg\max_{(S_p, S_q) \in \mathcal{S}_{p,q}} \sigma(F_{B_p}).
\end{equation}
See Proposition \ref{proposition:pca} below. However, in general, the minimization procedure in Definition \ref{def:DeeplyImmersion} and the maximization procedure in \eqref{maximization} are not equivalent. To see this, we consider a mixture of four bivariate normal distributions with variance $\eta^{2} I$ and means the vertices $(1,1)$, $(1,-1)$, $(-1,1)$, and $(-1,-1)$ of a square. In Section \ref{sm:sec:mixture_of_normal_distributions} of the Supplemental Material, we compute the dispersion measure based on the halfspace depth of the projected distribution along the directions $B_{1}(u) = (u, \sqrt{1-u^{2}})$ for all $u \in [-1,1]$ and show that it is maximized at $u=-1,0,1$ and minimized at $u=-1/\sqrt{2}, 1/\sqrt{2}$. We deduce that maximization of the dispersion measure yields the directions given by the sides of the square whereas minimization gives the directions given by the diagonals of the square. The same calculations hold (letting $\eta\downarrow 0$) for the uniform distribution on the vertices of the square. The dispersion measure is plotted in Figure \ref{figure_dispersion_mixture_normal} as a function of $u$.

\begin{figure}
\centering{    
  \includegraphics[width=0.69\textwidth]{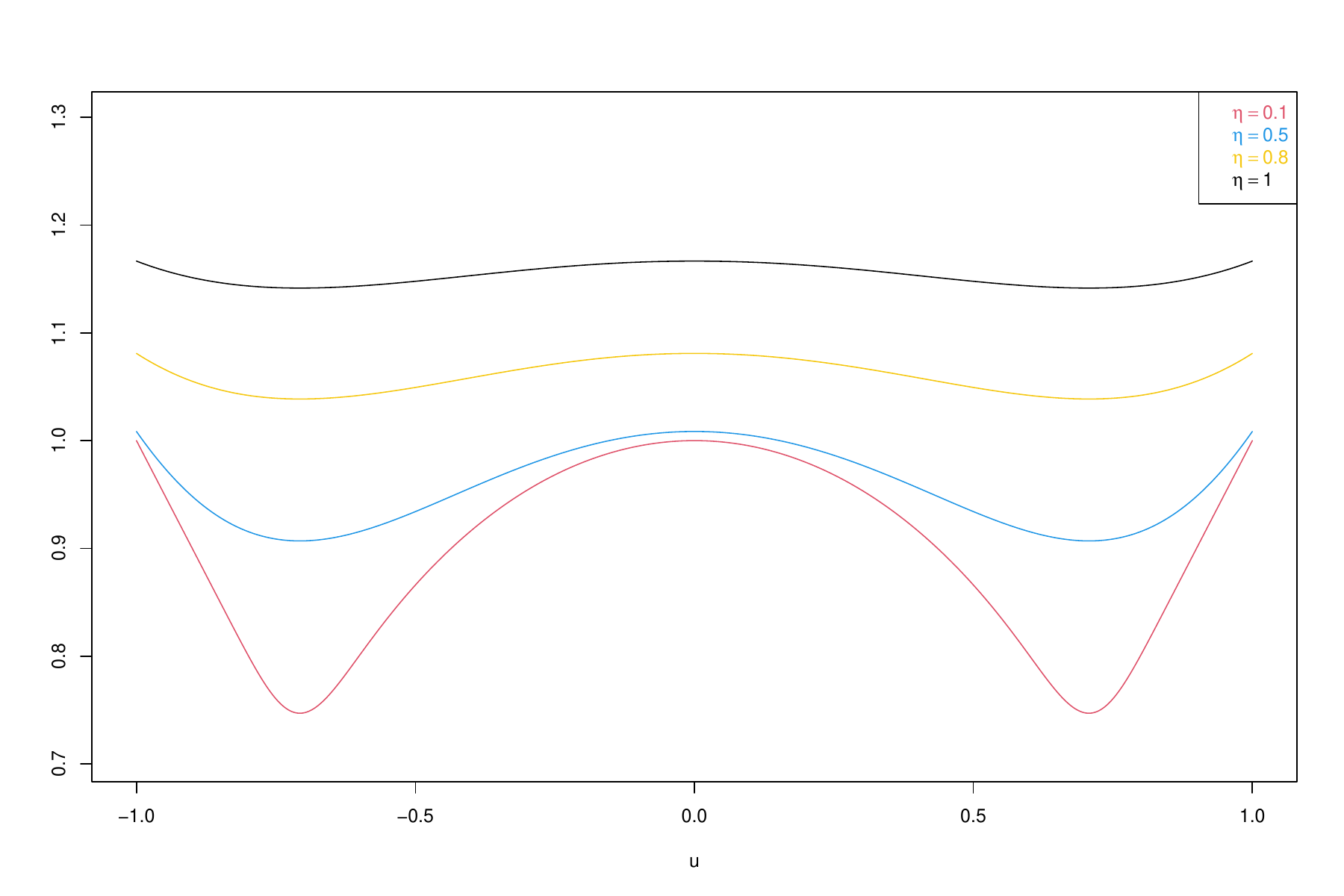}
}
\caption{Mixture of normal distributions. Dispersion measure of the projected distribution, as a function of $u \in [-1,1]$, of a mixture of normal distributions with variance $\eta^{2} I$ and means the vertices $(1,1)$, $(1,-1)$, $(-1,1)$, and $(-1,-1)$ of a square. The analysis is performed using halfspace depth.}
\label{figure_dispersion_mixture_normal}
\end{figure}

We conclude this section by establishing the equivalence of the minimization and maximization of the dispersion measure to PCA for elliptically symmetric distributions.
\begin{proposition} \label{proposition:pca}
  Let $\vect{X} \sim F$ be absolutely continuous and elliptically symmetric with mean $\vect{\mu}$ and covariance matrix $\Sigma$. Consider a statistical data depth $d(\cdot , \cdot)$ with respect to at least elliptical symmetry and the corresponding dispersion measure $\sigma(\cdot)$. If $\sigma(F)<\infty$, then
\begin{equation*}
(\hat{S}_p, \hat{S}_q) = \arg\min_{(S_p, S_q) \in \mathcal{S}_{p,q}} \sigma(F_{B_q})
\end{equation*}
if and only if $\hat{S}_p$ and $\hat{S}_q$ are orthogonal subspaces in $\mathbb{R}^m$ generated by the first $p$ and the last $q$ principal directions obtained from PCA. In turn, this is equivalent to
\begin{equation*}
  (\hat{S}_p, \hat{S}_q) = \arg\max_{(S_p, S_q) \in \mathcal{S}_{p,q}} \sigma(F_{B_p}).
\end{equation*}  
\end{proposition}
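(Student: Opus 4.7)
The plan is to reduce $\sigma(F_{B_q})$ to a factor depending on $B_q$ only through $\det(B_q\Sigma B_q^\top)$ and then invoke a classical eigenvalue extremal principle. First I would use the elliptical representation $\vect{X}=\vect{\mu}+A\vect{Z}$ with $AA^\top=\Sigma$ and $\vect{Z}$ spherically symmetric on $\mathbb{R}^m$. For $(S_p,S_q)\in\mathcal{S}_{p,q}$ with $B_qB_q^\top=I_q$, the marginal $\vect{Y}=B_q\vect{X}$ is elliptically symmetric on $\mathbb{R}^q$ with mean $B_q\vect{\mu}$ and scatter $B_q\Sigma B_q^\top=A_qA_q^\top$. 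Writing $\vect{Y}=B_q\vect{\mu}+A_q\vect{Z}_q$, the variable $\vect{Z}_q$ is spherically symmetric and its radial profile is pinned down by the elliptical generator of $F$ and the dimension $q$ alone---two choices of basis for the same $S_q$ differ by an orthogonal rotation of $\vect{Z}_q$, to which spherical distributions are invariant.

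Next I would convert this into a formula for $\sigma(F_{B_q})$. Because the assumed depth $d$ is affine-equivariant on the elliptical family (this is the content of the hypothesis ``with respect to at least elliptical symmetry'' combined with the standard invariance property of $d$), one has $d(\vect{y},F_{B_q})=d(A_q^{-1}(\vect{y}-B_q\vect{\mu}),F_{\vect{Z}_q})$. A change of variables then yields
\begin{equation*}
\sigma(F_{B_q})=|\det A_q|\,\sigma(F_{\vect{Z}_q})=|\det(B_q\Sigma B_q^\top)|^{1/2}\,C_q,
\end{equation*}
where $C_q:=\sigma(F_{\vect{Z}_q})$ depends only on $F$ and $q$. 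Finiteness of $C_q$ follows from $\sigma(F)<\infty$ applied with $q=m$ and nonsingular $\Sigma$. In particular, the minimization in Definition \ref{def:DeeplyImmersion} and the maximization in \eqref{maximization} are equivalent, respectively, to minimizing $\det(B_q\Sigma B_q^\top)$ and maximizing $\det(B_p\Sigma B_p^\top)$ over orthonormal frames.

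At this point the problem is purely linear-algebraic. Let $\lambda_1\ge\cdots\ge\lambda_m$ denote the eigenvalues of $\Sigma$. By the Poincar\'e separation theorem (equivalently, Ky Fan's extremal characterization),
\begin{equation*}
\prod_{i=p+1}^{m}\lambda_i\;\le\;\det(B_q\Sigma B_q^\top)\;\le\;\prod_{i=1}^{q}\lambda_i,
\end{equation*}
with the lower bound attained iff the rows of $B_q$ span the eigenspace of the $q$ smallest eigenvalues; orthogonality then forces $\hat{S}_p$ to be spanned by the first $p$ principal directions. The analogous inequality for $B_p$ shows that $\sigma(F_{B_p})=C_p|\det(B_p\Sigma B_p^\top)|^{1/2}$ is maximized exactly when the rows of $B_p$ span the first $p$ principal directions, and the orthogonality built into $\mathcal{S}_{p,q}$ forces the companion $\hat{S}_q$ to coincide with the one produced by the minimization problem, establishing the final equivalence.

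I expect the main subtlety to be the factorization $\sigma(F_{B_q})=C_q|\det(B_q\Sigma B_q^\top)|^{1/2}$. It requires that the depth transforms equivariantly on the elliptical family and that all $q$-dimensional marginals of an elliptical distribution share a common spherical generator up to dimension---this is precisely where the assumption ``with respect to at least elliptical symmetry'' is used, and it is also the point where one must verify that $C_q$ is genuinely independent of $B_q$ rather than only of the particular orthonormal representative of $S_q$. Once this identity is in hand, the remainder of the proof is a textbook application of the Poincar\'e/Ky Fan inequalities, and the two equivalences in the statement become complementary expressions of the same eigenvalue extremal principle.
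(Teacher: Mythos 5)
Your argument is correct, but the reduction you use in the second half is genuinely different from the paper's. Both proofs start the same way: $B_q\vect{X}$ is elliptically symmetric with mean $B_q\vect{\mu}$ and scatter $B_q\Sigma B_q^{\top}$, and by affine invariance the depth of the projected distribution is a fixed non-increasing function of the Mahalanobis distance. From there the paper applies the coarea formula, writes $\sigma(F_{B_q})=\int_0^\infty g(r^2)\,H^{q-1}(\mathcal{E}(B_q\vect{\mu},B_q\Sigma B_q^{\top},r))\,dr$, and minimizes the surface area of each ellipsoidal level set over $B_q$; you instead make one global change of variables to obtain the closed-form factorization $\sigma(F_{B_q})=C_q\,\det(B_q\Sigma B_q^{\top})^{1/2}$ and then apply the Poincar\'e separation theorem to the determinant. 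Your route buys two things. First, the objective becomes an explicit function of $B_q$, whereas the paper's claim that the eigenbasis minimizes the Hausdorff measure of $\mathcal{E}(\vect{0},B_q\Sigma B_q^{\top},r)$ over all orthonormal frames is asserted rather than proved (justifying it would itself require the interlacing inequalities together with monotonicity of surface area under inclusion of convex bodies, so your determinant reduction is doing the same work more transparently). Second, you make explicit that the radial profile $C_q$ --- equivalently the paper's function $g$ --- depends only on $q$ and not on the particular $B_q$, via the common generator of all $q$-dimensional elliptical marginals and the orthogonal invariance of spherical laws; this is needed for any comparison across subspaces and is left implicit in the paper. The one place where you are no more rigorous than the paper is the finiteness of $C_q$ for $q<m$: your appeal to $\sigma(F)<\infty$ directly controls only $C_m$, which mirrors the paper's unproved remark that $\sigma(F)<\infty$ implies $\sigma(F_{B_q})<\infty$; since your identity holds in $[0,\infty]$ by Tonelli, this shared gap affects only the degenerate case where every projection has infinite dispersion. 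The handling of ties among eigenvalues is also at the same level of detail in both arguments.
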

\begin{proof}
For any nonnegative scalar $r$, vector $\vect{b} \in \mathbb{R}^{k}$ and $k \times k$ matrix $A$, we define the ellipsoid 
\begin{equation*}
\mathcal{E}(\vect{b}, A, r) = \{ \vect{y} \in \mathbb{R}^{k} \, : \, (\vect{y}-\vect{b})^{\top} A^{-1} (\vect{y}-\vect{b}) = r^{2} \}.
\end{equation*} 
Since $\Sigma$ is symmetric there exists an orthogonal matrix $U$ such that $\Sigma=U^{\top} D U$, where $D$ is the diagonal matrix with diagonal elements the real and positive eigenvalues $(\lambda_i)_{i=1}^{m}$ of $\Sigma$. In particular, the column vectors $(\vect{u}_i)_{i=1}^{m}$ of $U$ are the eigenvectors of $\Sigma$ and they constitute the principal axes of the ellipsoid $\mathcal{E}(\vect{\mu},\Sigma,r)$. Additionally, the rescaled eigenvalues $(r^2 \lambda_i)_{i=1}^m$ yield the square of the semi-axes length of the elliptical surface. Now, the last $q$ principal directions from principal component analysis are the eigenvectors $(\vect{u}_{i_j})_{j=1}^{q}$ associated with the $q$ smallest eigenvalues $(\lambda_{i_j})_{j=1}^{q}$. We assume without loss of generality (w.l.o.g.) that $i_1<i_2<\dots<i_q$ and $\lambda_{i_1} \le \lambda_{i_2} \le \dots \le \lambda_{i_q}$. We show below that the minimization of the dispersion measure $\sigma(F_{B_{q}})$ over all matrix $B_{q}$ with orthonormal row vectors yields once again the vectors $(\vect{u}_{i_j})_{j=1}^{q}$, that is, the minimum is obtained by taking $B_{q} = (\vect{u}_{i_1} \dots \vect{u}_{i_q})^{\top}$. We notice that ties, such as $\lambda_{i_q}=\lambda_{i_{q+1}}$, are possible, in which case, the eigenvectors $\vect{u}_{i_q}$ and $ \vect{u}_{i_{q+1}}$ yield both the same minimal value. However, this is true for both methods. Similarly, one can show that maximization of the dispersion measure $\sigma(F_{B_{p}})$ is achieved for $B_{p} = (\vect{u}_{i_{q+1}} \dots \vect{u}_{i_m})^{\top}$ yielding the final statement.

For any matrix $B_{q}$ with orthonormal row vectors, $B_{q} \vect{X}$ is elliptically symmetric with mean $B_{q} \vect{\mu}$ and covariance matrix $B_{q} \Sigma B_{q}^{\top}$ (see Proposition \ref{sm:prop:sym:sub:elliptical} of the Supplemental Material). By Theorem 3.3 in \citet{zuoserfling2000c} (see also Lemma 3.1 in \citet{liu1993}), there exists a non-increasing function $g$ such that
\begin{equation*}
d(\vect{x},F_{B_{q}}) = g((\vect{x}-B_{q} \vect{\mu})^{\top} (B_{q} \Sigma B_{q}^{\top})^{-1} (\vect{x}-B_{q} \vect{\mu})).
\end{equation*}
Using the coarea formula we obtain that
\begin{equation*}
  \sigma(F_{B_{q}}) = \int_{0}^{\infty} g(r^2) \, H^{q-1}(\mathcal{E}(B_{q}\vect{\mu}, B_{q} \Sigma B_{q}^{\top},r)) \, dr,
\end{equation*}
where $H^{q-1}(\cdot)$ is the $(q-1)$-dimensional Hausdorff measure and is used to measure the surface area of the ellipsoid $\mathcal{E}(B_{q}\vect{\mu}, B_{q} \Sigma B_{q}^{\top},r)$. We notice that the assumption $\sigma(F)<\infty$ implies that $\sigma(F_{B_{q}})<\infty$. Using the translation invariance of the Hausdorff measure and $\Sigma=U^{\top} D U$, we see that
\begin{equation*}
  H^{q-1}(\mathcal{E}(B_{q}\vect{\mu}, B_{q} \Sigma B_{q}^{\top},r)) = H^{q-1}(\mathcal{E}(\vect{0}, B_{q} \Sigma B_{q}^{\top},r)) 
\end{equation*}  
and
\begin{equation*}
  \mathcal{E}(\vect{0}, B_{q} \Sigma B_{q}^{\top},r) = B_{q} U^{\top} \mathcal{E}(\vect{0}, D, r).
\end{equation*}
The matrix $B_{q}^{\prime}=B_{q} U^{\top}$ yielding the minimal surface area is $B_{q}^{\prime} = \left(\vect{e}_{i_1} \dots \vect{e}_{i_q} \right)^{\top}$ and corresponds to the centered ellipsoid
\begin{equation*}
  B_{q}^{\prime} \mathcal{E}(\vect{0}, D, r) = \mathcal{E}(\vect{0}, B_{q}^{\prime} D (B_{q}^{\prime})^{\top}, r) = \{ \vect{y} \in \mathbb{R}^{q} \, : \, \sum_{j=1}^q \lambda_{i_j}^{-1} y_{i_{j}}^2 =r^2 \}.
\end{equation*}
We conclude that $H^{q-1}(\mathcal{E}(B_{q}\vect{\mu}, B_{q} \Sigma B_{q}^{\top},r))$ is minimized by taking $B_{q}= B_{q}^{\prime} U =  \left(\vect{u}_{i_1} \dots \vect{u}_{i_q} \right)^{\top}$.
\end{proof}

\section{Dimension reduction}
\label{sec:dimension_reduction}

Dimension reduction techniques aim to reduce the number of components of a multivariate distribution $F$ in $\mathbb{R}^{m}$ while retaining as much of the relevant features as possible. In particular, PCA searches for the $p$-dimensional subspace of $\mathbb{R}^{m}$ where the projected distribution has maximal variance. A critical aspect of the analysis is the choice of the projecting dimensions $p$ and $q$. In the context of depth functions, the optimal dimension can be directly chosen using the dispersion measure $\sigma(\cdot)$. As before let $\mathcal{X}=\{\vect{X}_{1}, \vect{X}_{2}, \dots, \vect{X}_{n}\}$ be a sample of i.i.d.\ random variables in $\mathbb{R}^{m}$ with distribution function $F$. Assume without loss of generality that $\mathcal{X}$ is contained in a sufficiently small ball centered at the origin (if not, multiply all sample points by a positive scalar). We show in Proposition \ref{sm:prop_monotonicity} of the Supplemental Material that, if the sample is contained in a sufficiently small ball, the maximal dispersion measure is non-increasing with respect to $p$. Thus, we may plot the maximal dispersion for different values of $p \in \{1, \dots, m\}$ and identify the optimal value, for instance, by the elbow method. To illustrate this, we consider the  Iris dataset, which consists of $n=150$ observations with $m=4$ measurements each (sepal length, sepal width, petal length, and petal width) from 3 species (Iris Setosa, Iris Versicolour, and Iris Virginica), which are referred hereafter to as groups 1, 2, and 3. After rescaling the data so that their maximal norm is exactly $r_{H}$ (given in Proposition \ref{sm:prop_monotonicity}), we compute the maximal dispersion measure for all projecting dimensions $p=1,2,3,4$. This is plotted in Figure \ref{figure_iris.max} (left). Since the first value is considerably larger, we take $p=1$, which corresponds to projecting the data in one dimension. The projected data and their (halfspace) depth value are plotted in Figure \ref{figure_iris.max} (right).
\begin{figure}
\centering{
  \includegraphics[width=0.49\textwidth]{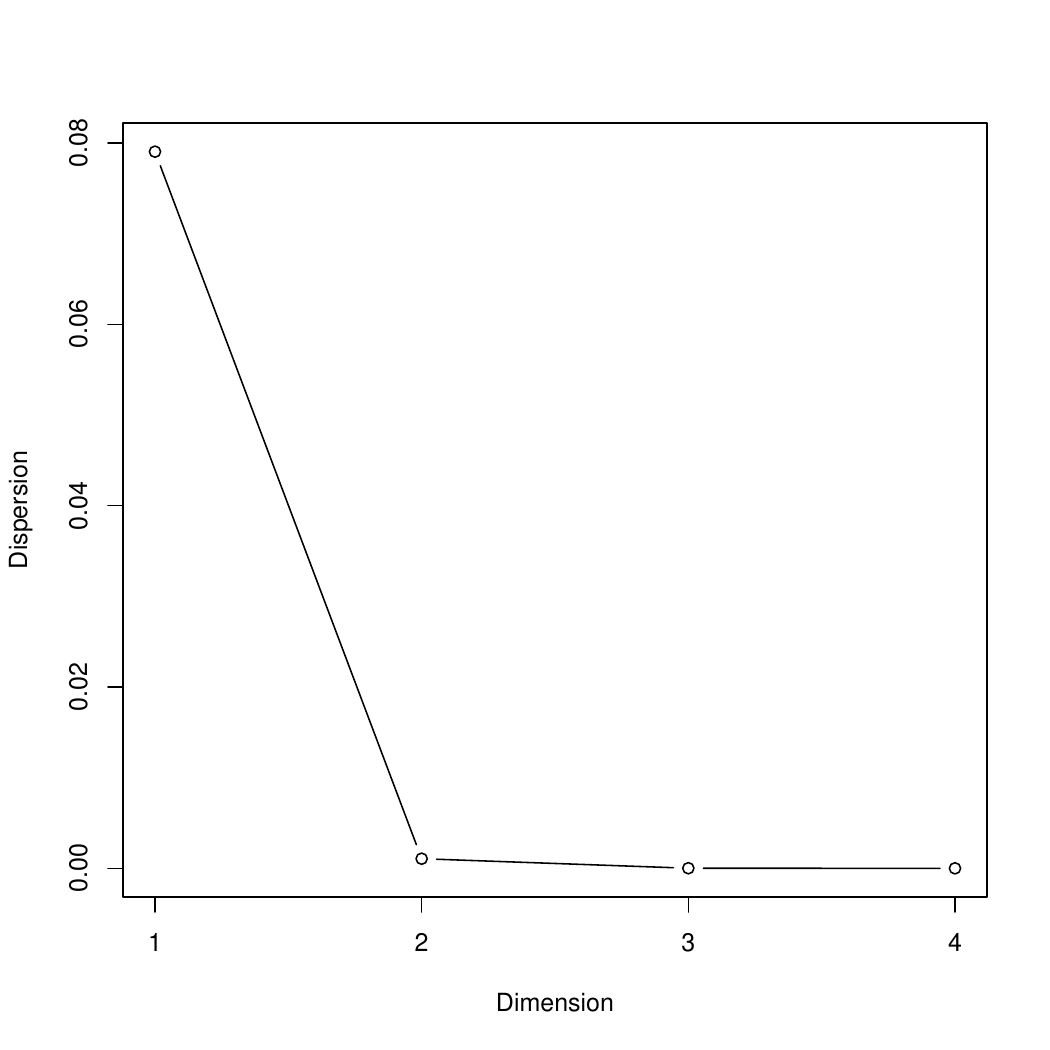}
  \includegraphics[width=0.49\textwidth]{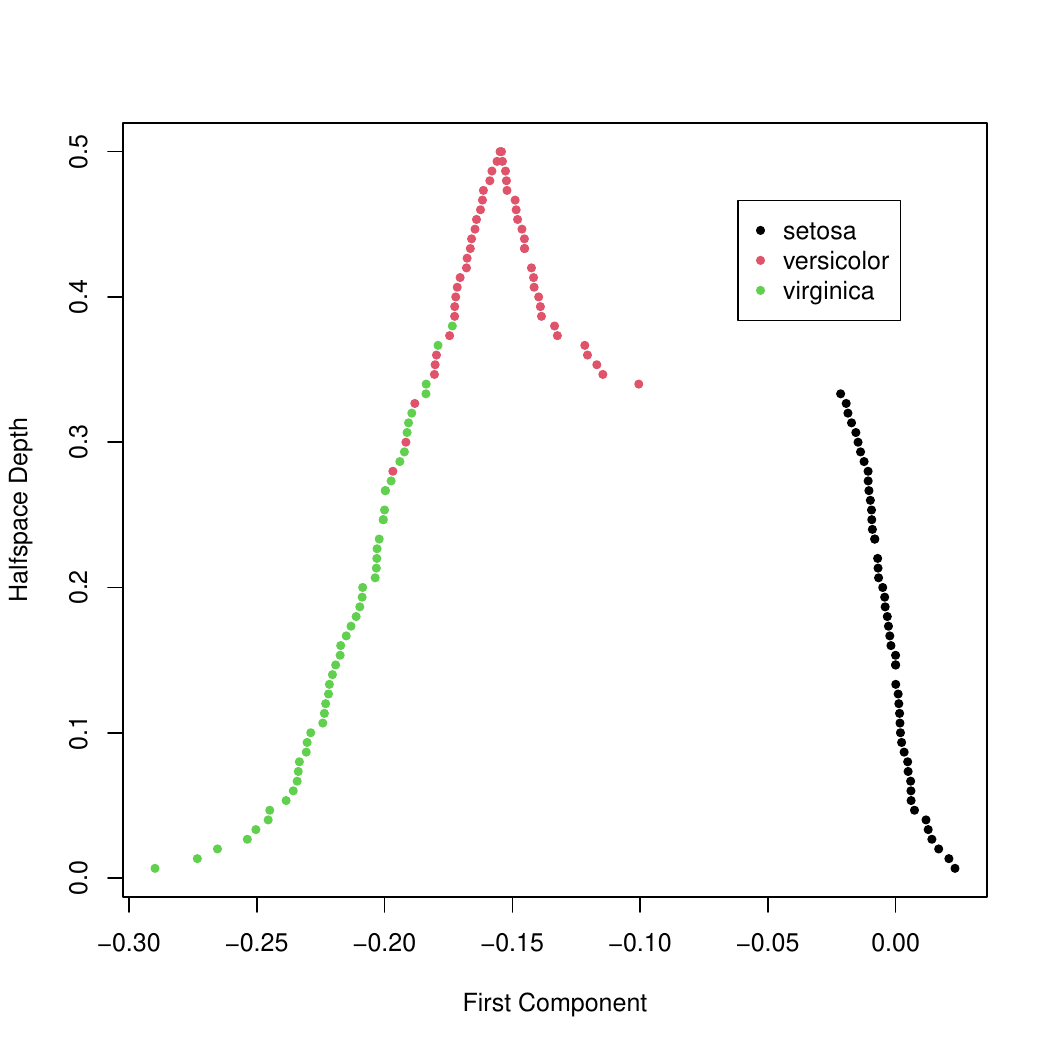}
}
\caption{Iris data set. Maximal dispersion measure for $p=1,2,3,4$ (on the left) and the depth values of the projected data (on the right). The analysis is performed using halfspace depth.}
\label{figure_iris.max}
\end{figure}
We see that the data in group 1 (Iris setosa) are well separated, whereas group 2 and 3 have a small overlap. We perform a hierarchical cluster analysis using the distance between the projected data and the R functions \texttt{hclust} and \texttt{cutree} to identify the 3 groups in the data. The results are reported in Table \ref{table_1} (left) and show correct clustering for 143 out of 150 observations. We repeat the hierarchical clustering procedure using the first component of PCA. The corresponding results are reported in Table \ref{table_1} (right). In this case, 15 observations in group 3 are wrongly assigned to group 2.
\begin{table}[h]
\centering
\begin{tabular}{|c|c|c|c|c|}
\hline
  & $1$ & $2$ & $3$ \\ \hline
$1$ & $50$ & $0$ & $0$ \\ \hline
$2$ & $0$ & $44$ & $6$  \\ \hline
$3$ & $0$ & $1$ & $49$ \\ \hline
\end{tabular} \hspace{1cm}
\begin{tabular}{|c|c|c|c|c|}
\hline
  & $1$ & $2$ & $3$ \\ \hline
$1$ & $50$ & $0$ & $0$ \\ \hline
$2$ & $0$ & $50$ & $0$  \\ \hline
$3$ & $0$ & $15$ & $35$ \\ \hline
\end{tabular}
\caption{Contingency tables for hierarchical clustering of Iris data using one dimensional projection yielding maximal dispersion measure (left) and first component of PCA (right).}
\label{table_1}
\end{table}
This result provides empirical evidence that the procedure based on maximizing the dispersion measure is at least comparable to PCA. For elliptically symmetric distributions, we showed in Proposition \ref{proposition:pca} that the minimization and maximization procedures in Definition \ref{def:DeeplyImmersion} and equation \eqref{maximization} are equivalent to PCA. Further data analysis is discussed in the next section.

\section{Data analysis}
\label{sec:data_analysis}

In this section, we discuss some data sets about products imported to the European Union (EU) from a state outside the EU. They consist of weights and prices of a product (P) from an origin (O) to a destination (D) and are therefore referred to as POD \citep{arsenis2015} followed by a number labeling a specific data set. The joint research center and the European Anti-fraud Office work for the estimation of fair trade prices and price outliers. This facilitates the detection of misdeclarations. In particular, lower unitary prices may signal an import duty fraud and may be further investigated by the authorities in charge. We analyze these datasets on a logarithmic scale using both data depth and central subspace data depth. First, we perform the Rayleigh test for uniformity, as discussed in Section \ref{sec:selection_of_q}. In all cases, the test is rejected with a p-value smaller than the machine's accuracy, indicating that the optimal dimensions are $p=q=1$. Next, we determine the optimal projection and analyze the data using central subspace data depth. In all cases, the data exhibit a correlation of at least $0.98$ in absolute value when projected onto the optimal direction and along the first principal component given by PCA. Figure \ref{figure_pod33} and \ref{figure_pod19} show the data sets POD 33 and POD 19, respectively. Two further data sets (POD 30 and POD 54) are analyzed in the Supplemental Material, Section \ref{sm:sec:further_examples}. Depth values for these data sets were computed using halfspace depth. The results using the simplicial depth are similar and are reported in the Supplemental Material.

\begin{figure}
\centering{    
  \includegraphics[width=0.32\textwidth]{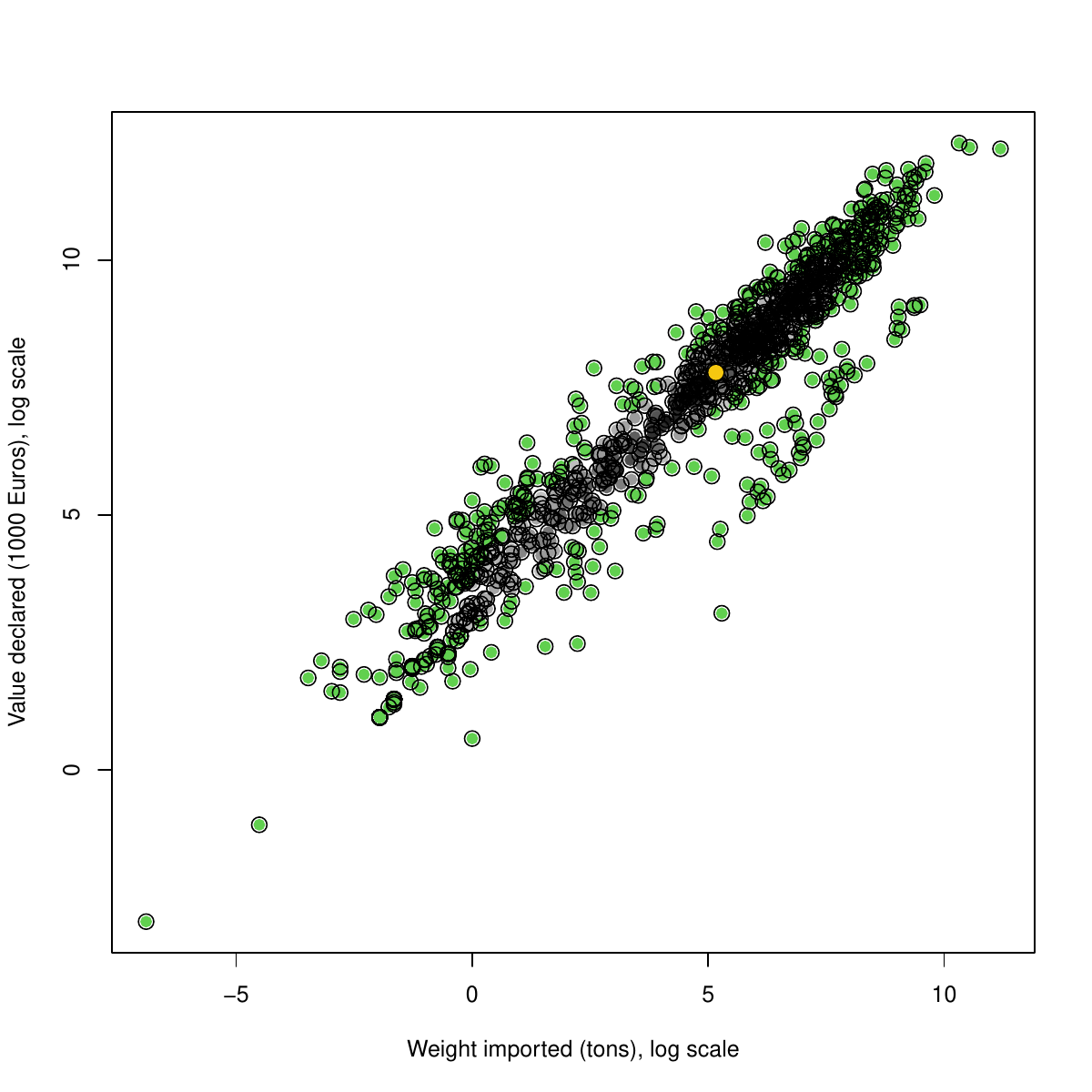}
  \includegraphics[width=0.32\textwidth]{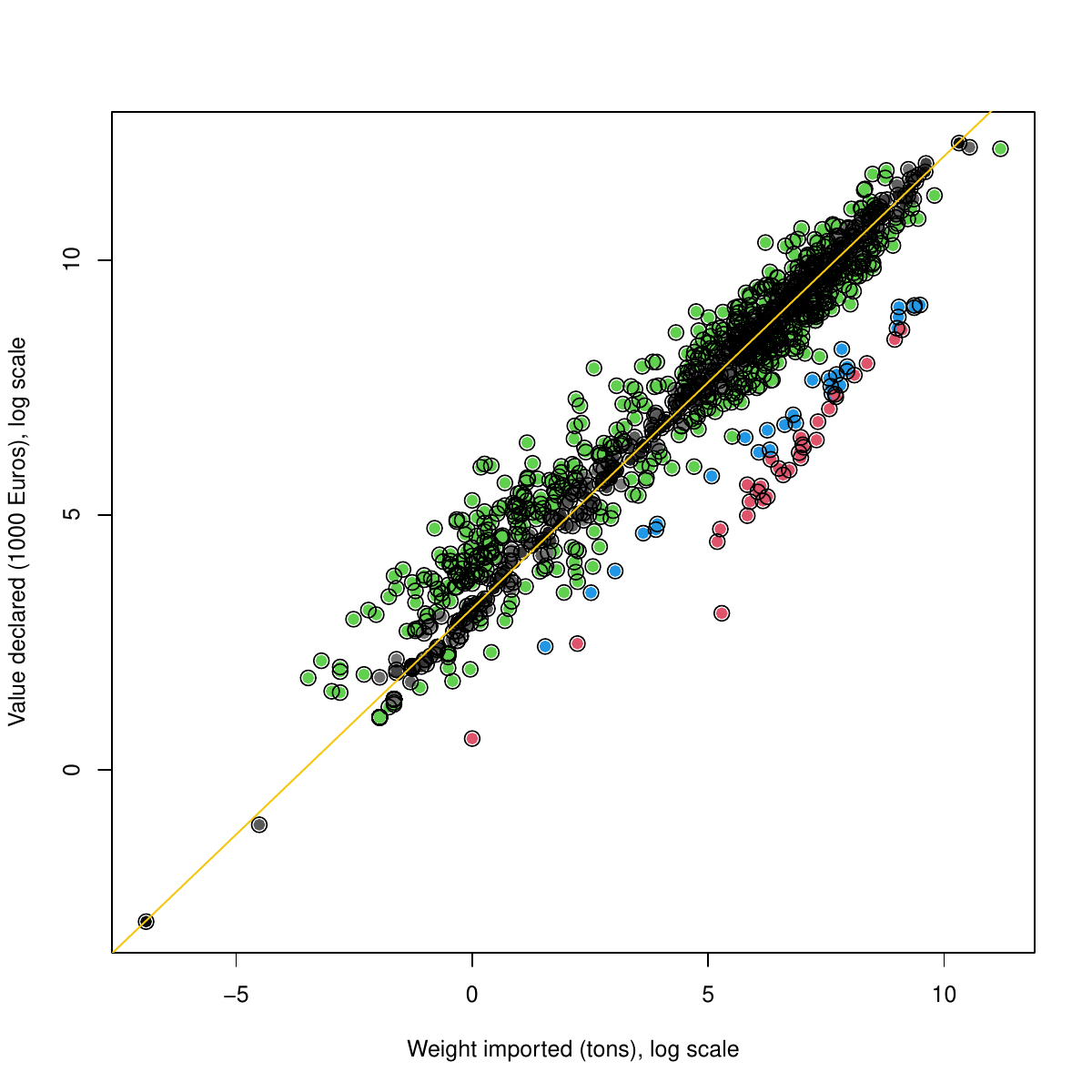}
  \includegraphics[width=0.32\textwidth]{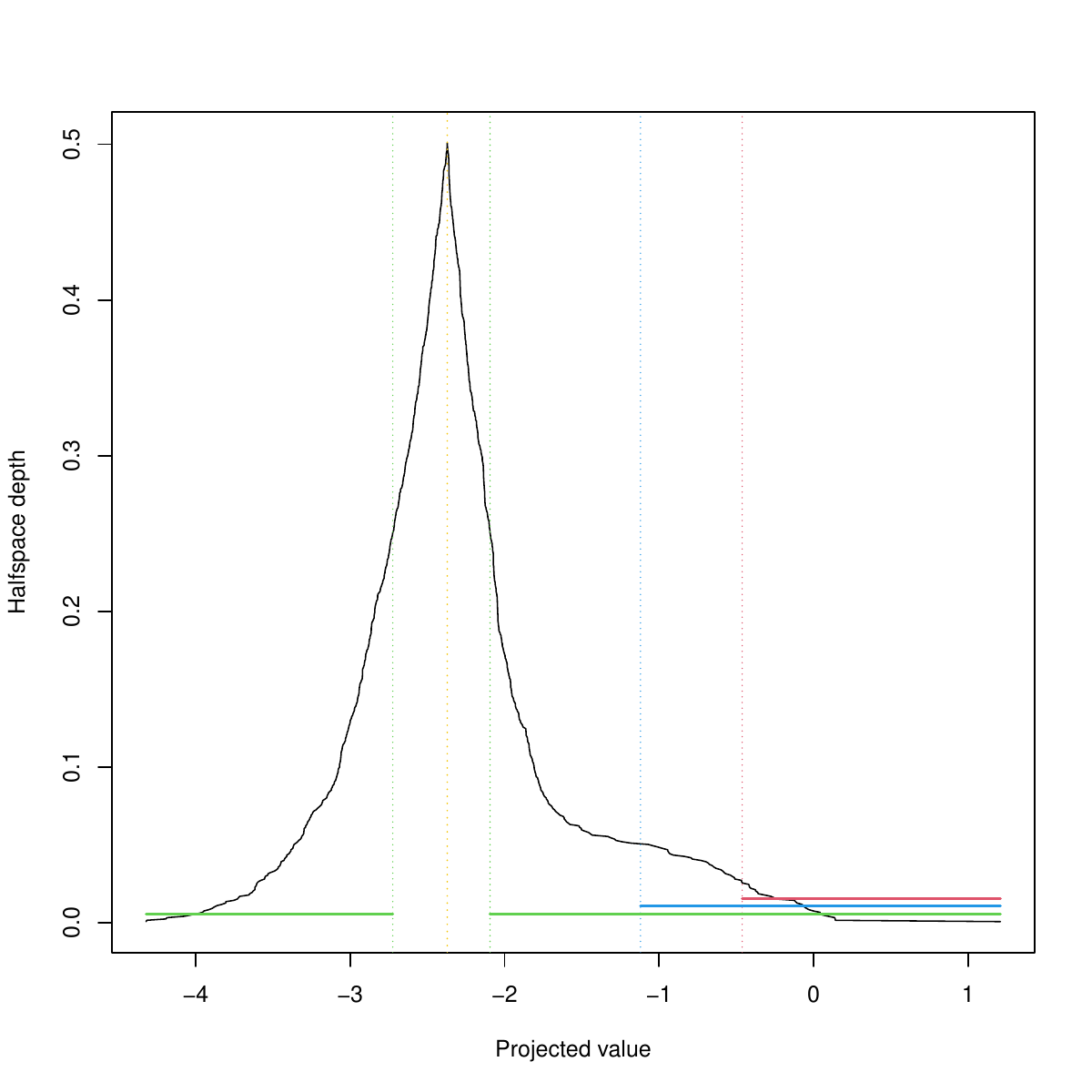}
}
\caption{POD 19 data set. Weights and prices in log scale. Data depth (on the left), central subspace data depth (in the center) and data depth of projected values (on the right). The analysis is performed using halfspace depth.}
\label{figure_pod19}
\end{figure}

In Figure \ref{figure_pod19} (left and center), all points in the central region with $0.5$ probability content are colored based on a gray scale according to their depth values. We highlight in yellow, on the left, the point of maximum depth and, in the center, the subspace of maximum depth. The points with the lowest $50\%$ depth values are shown in green. As we can see from the depth values of the points projected onto the orthogonal subspace, which in this case has dimension $1$, Figure \ref{figure_pod19} (right), this data set has a small inter quantile range. In the center and on the right, we color in blue the points with quantiles of order between $0.95$ and $0.975$ and in red the points with quantiles of order at least $0.975$. These are outliers and may be related to a fraud.

\citet{riani2008} extensively analyzes the fishery data set (FSDA MATLAB toolbox \citep{riani2020}), available in the R package \texttt{fsdaR} \citep{todorov2020} and concerning weights and prices of fish imported in the EU. The data set consists of 677 observations of monthly imports, over a period of three years, from a foreign country to different member states. In particular, the data set contains 35 anomalous flows to a member state (MS) denoted as MS7 and a single anomalous flow to another MS, MS11. Further 11 flows (7 of which to MS2) may also be anomalous. The data set is plotted in Figure \ref{figure_fishery}. The second row is in the original scale, while the first row is in log-scale.
\begin{figure}
\centering{    
  \includegraphics[width=0.49\textwidth]{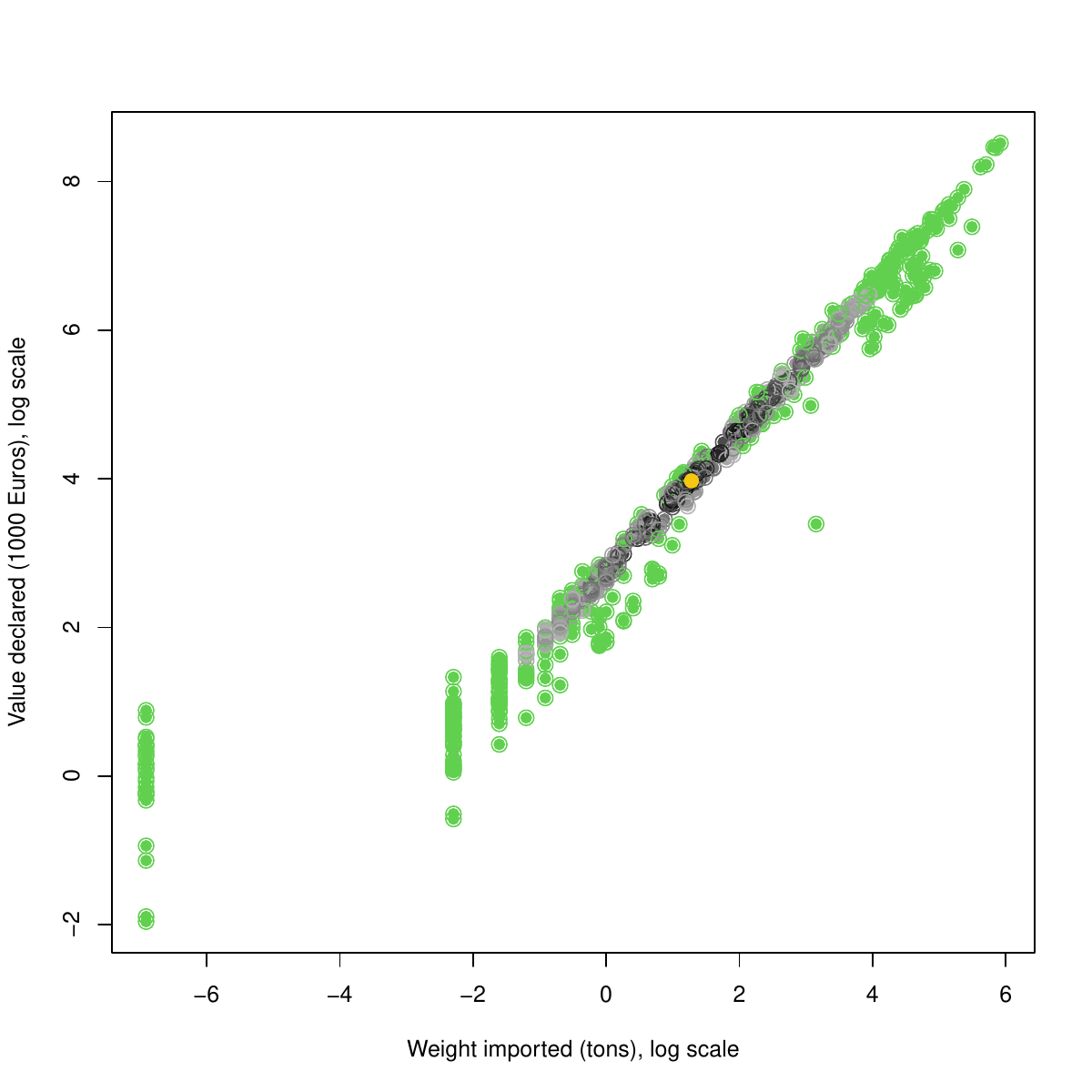}
  \includegraphics[width=0.49\textwidth]{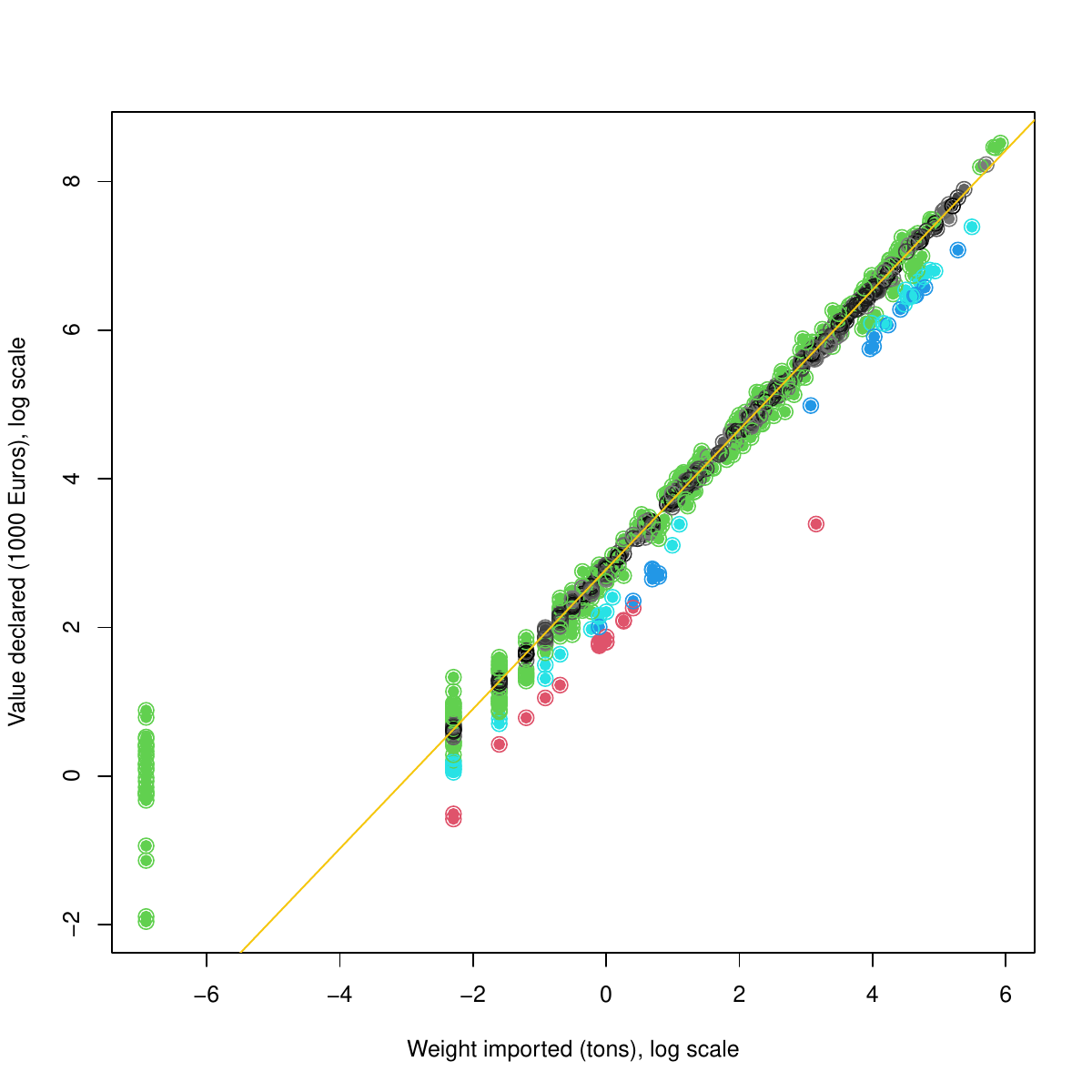}
  \includegraphics[width=0.49\textwidth]{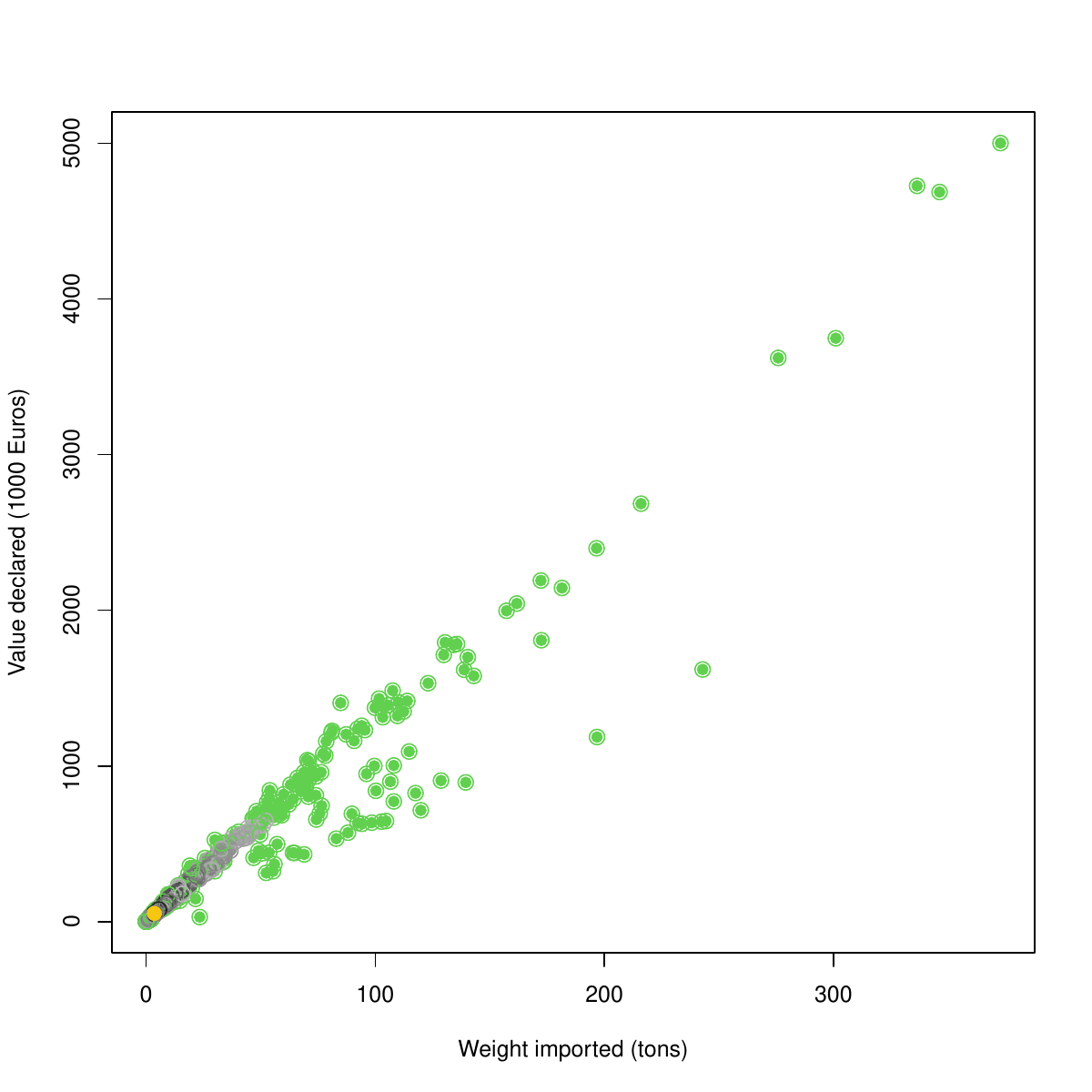}
  \includegraphics[width=0.49\textwidth]{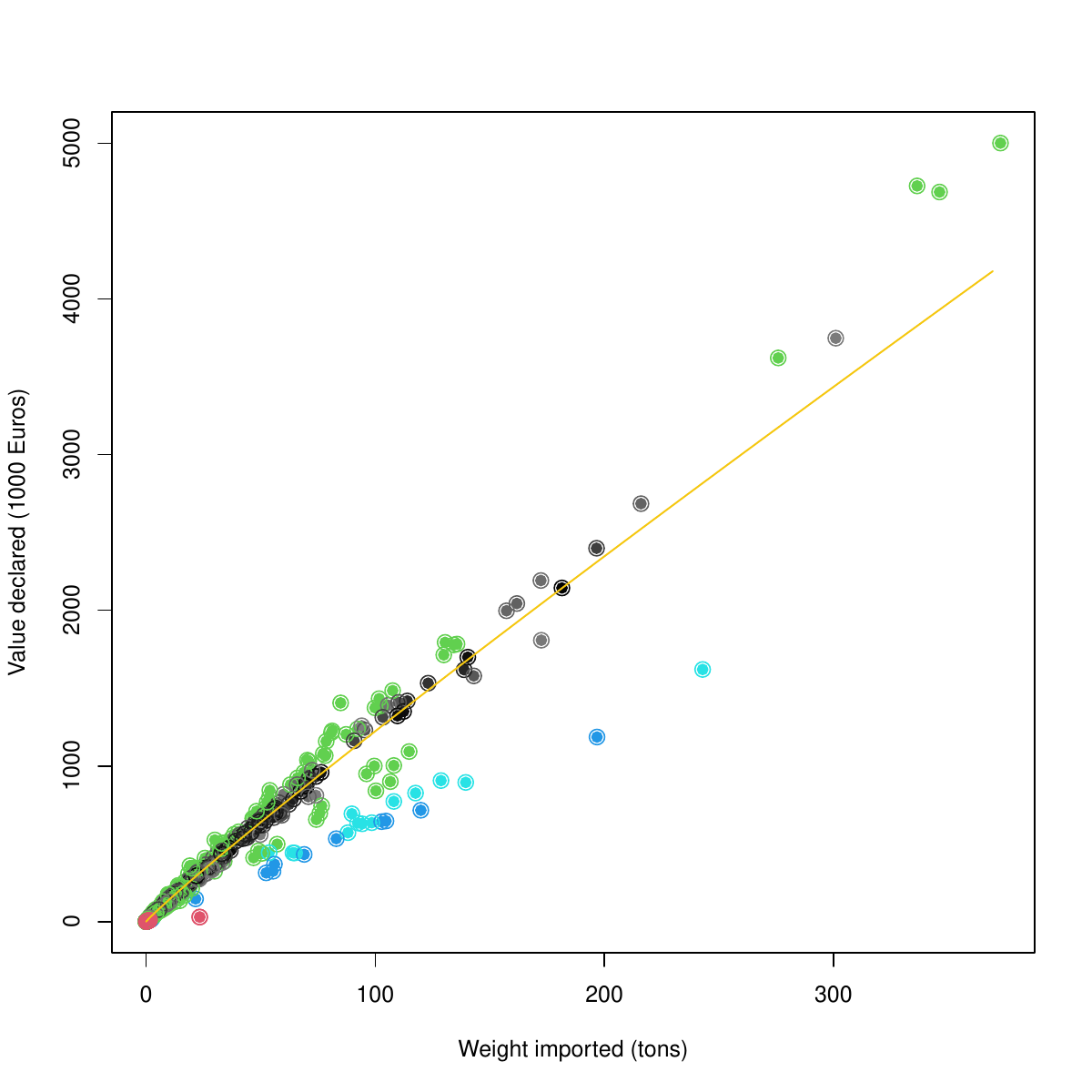}
}
\caption{Fishery data set. In the first row weights and prices are reported in log scale. In the second row the results are back-transformed in the original scale. Data depth (on the left) and central subspace data depth (on the right). The analysis is performed using halfspace depth.}
\label{figure_fishery}
\end{figure}
On the left, the data depth in log-scale is computed. The yellow point is the data point with maximum depth, while the green color highlights the points with the lowest $50\%$ depth values. On the right, the central subspace data depth is computed, the straight line with maximum depth is identified and highlighted in yellow, while the points with the lowest $50\%$ depth values are shown in green. The points with (univariate) quantiles of order between $0.9$ and $0.95$, $0.95$ and $0.975$ or $0.975$ or higher are plotted in cyan, blue or red, respectively (notice that they are a subset of the ``green'' points). These points are identified as possible outliers. A comparison of the plot on the bottom-right with Figures 3, 6 and 9 in \citet{riani2008} shows a good correspondence between the cyan and blue points and those identified in that manuscript. One of the red points corresponds to the flow to MS11, while the other red points are outliers not previously identified.

\section{Concluding remarks}
\label{sec:concluding_remarks}

We introduce a generalization of the concept of statistical data depths that allows to analyze more structured data, in which the role of center is played by a subspace rather than by a point. The new introduced data depths have the same properties of the usual data depths apart from invariance to affine transformations, which is relaxed to location, scale, rotation and reflection invariance. The direction of the central subspace, and its parallel subspaces, is obtained by minimizing a scalar measure of dispersion based on data depth so that the whole procedure is completely based on tools provided by the depth. We study the analytical and asymptotic properties of this generalization and emphasize its potential with the analysis of several real data examples.  We also investigate the use of dispersion measure based on the depth in projection pursuit and dimension reduction.

\newpage

\appendix

\textbf{ \huge Supplemental material for ``Central subspace data depth''}

\section{Introduction}

The Supplemental Material is divided in four main sections. Section \ref{sm:section_symmetry} is about symmetry: Subsection \ref{sm:subsec:symmetries} gives a brief overview of notions of symmetry for multivariate random variables; Subsection \ref{sm:subsec:symmetryinsubspaces} shows that those notions of symmetry are preserved by projections onto a subspace; and Subsection \ref{sm:subsec:symmetrywrtsubspaces} discusses further notions of symmetry with respect to subspaces. In Section \ref{sm:sec:depthproperties}, we recall the definition of statistical data depth and provide some examples from the literature. The dispersion measures $\sigma(\cdot)$ is studied in Section \ref{sm:sec:dispersion}. The (general) notion of dispersion measure is given in Subsection \ref{sm:subsection_dispersion_measures}. Subsection \ref{sm:subsection_properties_of_dispersion_measures} summarizes, from \citet{romanazzi2009}, the most relevant properties of $\sigma(\cdot)$. Subsection \ref{sm:subsection_finiteness_of_dispersion_measure} gives simple conditions for the finiteness of the dispersion measure $\sigma(\cdot)$ based on halfspace and simplicial depth. Subsection \ref{sm:subsection_continuity_of_dispersion_measure} studies continuity of  $\sigma(\cdot)$. These continuity results are extended to subspaces in Subsection \ref{sm:subsection_dispersion_measure_in_subspace}. Subsection \ref{sm:subsection_asymptotic_properties_of_dispersion_measure} contains asymptotic results for the empirical dispersion measure, both on the entire space and on subspaces. In Subsection \ref{sm:subsection_existence_and_uniqueness_of_minimizers}, we show the existence of subspaces that minimize/maximize the dispersion measure and show that, under some conditions, the maxima and minima of the empirical dispersion measure converge to the population maxima and minima. Next, in Subsection \ref{sm:subsec:monotonicity} we show that the dispersion measure is monotone with respect to the projecting dimension. Furthermore, Subsection \ref{sm:sec:mixture_of_normal_distributions} contains an example showing that the minimization procedure in Definition \ref{def:DeeplyImmersion} is not, in general, equivalent to the maximization procedure in \eqref{maximization}. Finally, Section \ref{sm:sec:further_examples} contains further data analysis.

\section{Symmetry}
\label{sm:section_symmetry}

In this section, after a revision of the symmetry of a distribution with respect to a point in a multivariate Euclidean space (Subsection \ref{sm:subsec:symmetries}), we extend these concepts to symmetry of a distribution with respect to subspaces (Subsection \ref{sm:subsec:symmetrywrtsubspaces}). Subsection \ref{sm:subsec:symmetryinsubspaces} studies the connection between symmetry with respect to a point and symmetry with respect to a subspace.

\subsection{Symmetry with respect to a point}
\label{sm:subsec:symmetries}

The center $\vect{\mu} \in \mathbb{R}^m$ of a multivariate random variable $\vect{X} \in \mathbb{R}^m$ is related to the corresponding notion of symmetry. So far, several notions of multivariate symmetry have been used. They include, in increasing order of generality, spherical, elliptical, central, angular and halfspace symmetry \citep{zuo1998,zuoserfling2000c,serfling2006}. For univariate distributions, spherical, elliptical and central symmetry all coincide with the usual notion of symmetry, while a point of halfspace symmetry is a median of the distribution. If the distribution is continuous, the same holds for a point of angular symmetry. Hereafter we provide the definitions of these notions of symmetry. Given two random variables $\vect{X}$ and $\vect{Y}$, we write $\vect{X} \stackrel{d}{=} \vect{Y}$ for ``$\vect{X}$ and $\vect{Y}$ have the same distribution''.

\begin{definition}[Spherical symmetry]
$\vect{X}$ is \textbf{spherically symmetric} about $\vect{\mu} \in \mathbb{R}^m$ if
\begin{equation*}
\vect{X} - \vect{\mu} \stackrel{d}{=} U (\vect{X} - \vect{\mu}),
\end{equation*}
for all orthogonal $m \times m$ matrices $U$.
\end{definition}
Examples of spherically symmetric distributions are multivariate normal distributions with covariance matrix of the form $\sigma^2 I$ and some instances of standard multivariate $t$ and logistic distributions. In particular, the standard $m$-variate $t$-distribution with $k$ degrees of freedom has distribution $k^{1/2} \vect{Z}/S$, with $\vect{Z}$ standard $m$-variate normal and $S$ independently distributed as a chi-square with $k$ degrees of freedom.

\begin{definition}[Elliptical Symmetry]
$\vect{X}$ is \textbf{elliptically symmetric} (or ellipsoidally symmetric) with parameters $\vect{\mu}$ and $\Sigma$ if it is affinely equivalent to a spherically symmetric random variable $\vect{Y}$, that is
\begin{equation*}
\vect{X} \stackrel{d}{=} A \vect{Y} + \vect{\mu} \ ,
\end{equation*}
where the $k \times m$ matrix $A$ satisfies $A^\top A = \Sigma$ with $\rank{\Sigma} = k \le m$.
\end{definition}

For absolutely continuous and elliptically symmetric distributions, the contours of equal density are elliptical in shape. The class of elliptically symmetric distributions is readily seen to be closed under affine transformations and conditioning. For $\vect{Y}$ $m$-variate standard normal $N(\vect{0}, I_m)$, $\vect{\mu} \in \mathbb{R}^m$, and a $k \times m$ matrix $A$, the above definition of elliptically symmetric distributions implies that $\vect{X}$ is $m$-variate normal $N(\vect{\mu}, \Sigma)$ with $\Sigma = A^\top A$. Similarly, for $\vect{Y}$ $m$-variate (spherically symmetric) $t$, $\vect{X}$ is multivariate $t$ with parameters $\vect{\mu}$ and $\Sigma = A^\top A$ and $k$ degrees of freedom.

\begin{definition}[Central symmetry]
$\vect{X}$ is \textbf{centrally symmetric} about $\vect{\mu} \in \mathbb{R}^m$ if
\begin{equation*}
\vect{X} - \vect{\mu} \stackrel{d}{=} \vect{\mu} - \vect{X}.
\end{equation*}
\end{definition}

\begin{definition}[Angular symmetry]
$\vect{X}$ is \textbf{angularly symmetric} about $\vect{\mu} \in \mathbb{R}^m$ if
\begin{equation*}
\frac{\vect{X} - \vect{\mu}}{\norm{\vect{X}-\vect{\mu}}} \stackrel{d}{=} \frac{\vect{\mu} - \vect{X}}{\norm{\vect{X} - \vect{\mu}}},
\end{equation*}
where $\norm{\cdot}$ is the Euclidean norm. Or, equivalently, if $(\vect{X} - \vect{\mu})/\norm{\vect{X}-\vect{\mu}}$ is centrally symmetric.
\end{definition}

\begin{definition}[Halfspace symmetry]
$\vect{X}$ is \textbf{halfspace symmetric} about $\vect{\mu} \in \mathbb{R}^m$ if
\begin{equation*}
\bP(\vect{X} - \vect{\mu} \in H_0) \ge \frac{1}{2},
\end{equation*}
for any closed halfspace $H_0$ with the origin on the boundary.
\end{definition}
We refer to \citet{zuoserfling2000c} for the relationships between different concepts of symmetry and further properties.

\subsection{Symmetry with respect to a subspace}
\label{sm:subsec:symmetrywrtsubspaces}

In this section, we examine further the concept of symmetry with respect to a subspace introduced in Definition \ref{definition_symmetry_in_a_subspace} of the main paper. Let $\vect{X} \in \mathbb{R}^m$ be a random variable, $B_q$ a $q \times m$ matrix with orthonormal row vectors and $S_q$ the subspace generated by $B_q$. Similarly, $B_p$ is a $p \times m$ matrix with orthogonal row vectors, which are also orthogonal to $B_q$, and $S_p$ is the subspace generated by $B_p$. We recall that $\mathcal{S}_{p,q}$ is the set of all such pairs $(S_p, S_q)$. Among the notions of symmetry with respect to a subspace below, the notion of ``symmetry with respect to a subspace'' of Definition \ref{definition_symmetry_in_a_subspace} in the main paper is the weakest, and therefore it is called here ``weak symmetry''.

\begin{definition}[Weak Symmetry]
$\vect{X}$ is weak symmetric with respect to the subspace $S_p$ if the random variable $\vect{Y} = B_q \vect{X}$ is symmetric in $\mathbb{R}^q$.
\end{definition}

\begin{definition}[Mild Symmetry]
Let $(S_p, S_q) \in \mathcal{S}_{p,q}$, $\mathcal{A}_p$ a partition of measurable subsets of $\mathbb{R}^p$, $\vect{Y}_{B_q} = B_q \vect{X}$ and $\vect{Y}_{B_p} = B_p \vect{X}$. $\vect{X}$ is mild symmetric with respect to the subspace $S_p$ and the partition $\mathcal{A}_p$, if the conditional distribution of $\vect{Y}_{B_q}$ given $\vect{Y}_{B_p} \in A$ is symmetric for all $A \in \mathcal{A}_p$, whenever the conditional distribution is well defined.
\end{definition}

\begin{definition}[Strong Symmetry]
  Let $(S_p, S_q) \in \mathcal{S}_{p,q}$, $\vect{Y}_{B_q} = B_q \vect{X}$ and $\vect{Y}_{B_p} = B_p \vect{X}$. $\vect{X}$ is strong symmetric with respect to the subspace $S_p$ if the conditional distribution of $\vect{Y}_{B_q}$ given $\vect{Y}_{B_p} = \vect{y}$ is symmetric for all $\vect{y} \in \mathbb{R}^p$, whenever the conditional distribution is well defined.
\end{definition}

\begin{remark}
Since symmetry in $\mathbb{R}^q$ is invariant to rigid--body transformations, all the above notions of symmetry are invariant to the choice of the orthonormal bases of $S_p$ and $S_q$.
\end{remark}

\begin{remark}
Strong symmetry implies mild symmetry and mild symmetry implies weak symmetry. For $\mathcal{A}_p = \{ \mathbb{R}^p \}$ mild symmetry is equivalent to weak symmetry. For $p=0$ all notions of symmetry are equivalent and coincide with the usual notion of symmetry in $\mathbb{R}^m$.

In our definition of central subspace data depth (see Definition \ref{definition_depth_for_subspaces}) we used weak symmetry. However, the others two notions might be used as well.
\end{remark}

\subsection{Symmetry in a subspace}
\label{sm:subsec:symmetryinsubspaces}

When the subspace $S_p$ has dimension $0$, i.e.\ $p=0$, symmetry with respect to (w.r.t.)\ the subspace is equivalent to symmetry w.r.t.\ a point. In this subsection we show that symmetry w.r.t.\ points in $\mathbb{R}^{m}$ is preserved by the matrix $B_{q}$.

\begin{proposition}
\label{sm:prop:sym:sub:spherical}
If $\vect{X}$ is spherically symmetric about $\vect{\mu} \in \mathbb{R}^m$, then $\vect{Y} = B_q \vect{X}$ is spherically symmetric about $B_q \vect{\mu}$.
\end{proposition}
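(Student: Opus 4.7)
The plan is to reduce spherical symmetry of $\vect{Y}=B_q\vect{X}$ about $B_q\vect{\mu}$ to the assumed spherical symmetry of $\vect{X}$ about $\vect{\mu}$ by lifting each orthogonal $q\times q$ transformation to a suitably chosen orthogonal $m\times m$ transformation. Concretely, given an arbitrary orthogonal $q\times q$ matrix $V$, I want to exhibit an orthogonal $m\times m$ matrix $U$ such that $B_q U = V B_q$; once this is in hand, applying $B_q$ to the distributional identity $\vect{X}-\vect{\mu}\stackrel{d}{=}U(\vect{X}-\vect{\mu})$ yields $\vect{Y}-B_q\vect{\mu}\stackrel{d}{=}V(\vect{Y}-B_q\vect{\mu})$, which is exactly spherical symmetry of $\vect{Y}$ about $B_q\vect{\mu}$.

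First I would fix an arbitrary orthogonal $q\times q$ matrix $V$ and complete $B_q$ to a full orthonormal system of $\mathbb{R}^m$ by choosing a $p\times m$ matrix $B_p$ (with $p=m-q$) whose row vectors are orthonormal and orthogonal to those of $B_q$. This gives the identities $B_qB_q^\top = I_q$, $B_pB_p^\top = I_p$, $B_qB_p^\top = 0$, and the resolution of the identity $B_q^\top B_q + B_p^\top B_p = I_m$. Then I would define
\begin{equation*}
U = B_q^\top V B_q + B_p^\top B_p.
\end{equation*}

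Next I would verify the two required properties. Orthogonality of $U$ follows from a direct computation using the block identities above:
\begin{equation*}
UU^\top = B_q^\top V B_q B_q^\top V^\top B_q + B_p^\top B_p B_p^\top B_p = B_q^\top V V^\top B_q + B_p^\top B_p = B_q^\top B_q + B_p^\top B_p = I_m,
\end{equation*}
where I used $VV^\top = I_q$. The intertwining property is equally immediate:
\begin{equation*}
B_q U = (B_q B_q^\top) V B_q + (B_q B_p^\top) B_p = V B_q.
\end{equation*}

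Finally I would combine these: by hypothesis $\vect{X}-\vect{\mu}\stackrel{d}{=}U(\vect{X}-\vect{\mu})$, so applying the linear (hence measurable) map $B_q$ to both sides gives $B_q(\vect{X}-\vect{\mu})\stackrel{d}{=}B_qU(\vect{X}-\vect{\mu})=VB_q(\vect{X}-\vect{\mu})$, i.e.\ $\vect{Y}-B_q\vect{\mu}\stackrel{d}{=}V(\vect{Y}-B_q\vect{\mu})$. Since $V$ was arbitrary, $\vect{Y}$ is spherically symmetric about $B_q\vect{\mu}$. There is no serious obstacle here; the only subtle point is choosing $U$ that both belongs to the orthogonal group of $\mathbb{R}^m$ and restricts to $V$ on $S_q$, which is resolved by the block construction above (acting as $V$ on the $S_q$ component and as the identity on the orthogonal complement $S_p$).
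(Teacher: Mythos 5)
Your proof is correct and is essentially the paper's argument: the paper likewise completes $B_q$ with $B_p$ to an orthogonal matrix $B$ and lifts the $q$-dimensional rotation to a block-diagonal orthogonal matrix of $\mathbb{R}^m$, and your $U=B_q^\top V B_q+B_p^\top B_p$ is exactly the conjugate $B^\top\!\operatorname{diag}(V,I_p)B$ of that block matrix. The only cosmetic difference is that you package the lift in a single step via the intertwining identity $B_qU=VB_q$, whereas the paper splits it into two steps (first $B_q\vect{X}\stackrel{d}{=}\vect{X}_q$ via the orthogonal matrix $B$, then spherical symmetry of the coordinate subvector $\vect{X}_q$ via block-diagonal rotations).
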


\begin{proof}
We assume without loss of generality that $\vect{\mu} = \vect{0}$. Since $\vect{X}$ is spherically symmetric $U \vect{X} \stackrel{d}{=} \vect{X}$ for all orthogonal matrices $U$. Write $\vect{X} = (\vect{X}_p, \vect{X}_q)$ and choose $U$ as
\begin{equation*}
U = 
\begin{pmatrix}
U_p & 0 \\
0 & U_q
\end{pmatrix},
\end{equation*}
where $U_p$ and $U_q$ are two orthogonal matrices of dimension $p \times p$ and $q\times q$, respectively. Then, 
\begin{equation}
\label{sm:equ:sphe1}
U_q \vect{X}_q \stackrel{d}{=} \vect{X}_q, 
\end{equation}
that is, the subvector $\vect{X}_q$ is spherically symmetric; and similarly for $\vect{X}_p$. To show that $\vect{Y}$ is spherically symmetric we need to prove that $U_q B_q \vect{X} \stackrel{d} = B_q \vect{X}$ for all orthogonal matrices $U_q$. Let $B = (B_p^\top, B_q^\top)$ and observe that $B$ is orthogonal. Therefore, $B_q B = (0_{q \times p}, I_q)$ and hence
\begin{equation}
\label{sm:equ:sphe2}
B_q \vect{X} \stackrel{d}{=} B_q B \vect{X} = \vect{X}_q. 
\end{equation}
Now, the result follows from (\ref{sm:equ:sphe1}) and (\ref{sm:equ:sphe2}).
\end{proof}

\begin{proposition}
 \label{sm:prop:sym:sub:elliptical}
If $\vect{X}$ is elliptically symmetric with parameters $\vect{\mu}$ and $\Sigma$, then $\vect{Y} = B_q \vect{X}$ is elliptically symmetric with parameters $B_q \vect{\mu}$ and $B_q \Sigma B_q^{\top}$.
\end{proposition}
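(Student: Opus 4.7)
The natural strategy is to invoke the defining representation of elliptical symmetry directly and then push the projection $B_q$ through. By hypothesis we may write $\vect{X} \stackrel{d}{=} A \vect{Z} + \vect{\mu}$ with $\vect{Z}$ spherically symmetric about $\vect{0}$ and with the factor $A$ chosen so that the appropriate product of $A$ with its transpose equals $\Sigma$. Applying $B_q$ linearly gives
\begin{equation*}
\vect{Y} \;=\; B_q \vect{X} \;\stackrel{d}{=}\; (B_q A)\,\vect{Z} + B_q \vect{\mu},
\end{equation*}
which is again an affine image of the spherically symmetric $\vect{Z}$ and hence elliptically symmetric by the very definition.

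To identify the parameters, the plan is to compute the product of $B_q A$ with its transpose: one gets $B_q A A^{\top} B_q^{\top} = B_q \Sigma B_q^{\top}$, which matches the claimed scatter matrix, while the centering $B_q \vect{\mu}$ can be read off directly from the representation.

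The one point requiring care is the rank/dimension bookkeeping inherent in the definition of elliptical symmetry, which calls for a factorization whose inner dimension equals the rank of the scatter matrix. If $B_q A$ fails to have full row rank $q$, I would apply a thin singular value decomposition $B_q A = U D V^{\top}$, with $V$ having orthonormal columns, so that $V^{\top} \vect{Z}$ remains spherically symmetric (this is precisely the content of Proposition \ref{sm:prop:sym:sub:spherical} applied to $\vect{Z}$), and then take $A' = U D$ as the new factor; it satisfies $A' (A')^{\top} = U D D^{\top} U^{\top} = B_q \Sigma B_q^{\top}$, putting the representation of $\vect{Y}$ into the exact form demanded by the definition.

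I expect the main obstacle to be purely this bookkeeping of the dimension and rank conventions rather than anything analytic, since the core calculation---projecting an affine--spherical representation yields another such representation---is essentially a one-line linearity argument, and the preservation of sphericity under orthonormal-row projections has already been established in Proposition \ref{sm:prop:sym:sub:spherical}.
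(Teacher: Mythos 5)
Your argument is correct, but it is worth noting that the paper does not actually supply a proof here: it simply defers to Section 1.2.4 of Frahm (2004). Your proposal therefore fills in what the paper leaves as a citation, and it does so by the standard route --- the class of elliptically symmetric distributions is closed under affine maps, a fact the paper itself asserts without proof in Subsection \ref{sm:subsec:symmetries}. Concretely: writing $\vect{X} \stackrel{d}{=} A\vect{Z} + \vect{\mu}$ with $\vect{Z}$ spherical and pushing $B_q$ through is exactly right, the scatter computation $B_q A A^{\top} B_q^{\top} = B_q \Sigma B_q^{\top}$ is immediate, and your thin-SVD repair $B_q A = U D V^{\top}$ correctly handles the rank bookkeeping, since $V^{\top}$ has orthonormal rows and Proposition \ref{sm:prop:sym:sub:spherical} then gives sphericity of $V^{\top}\vect{Z}$ while $UD(UD)^{\top} = B_q\Sigma B_q^{\top}$. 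The only caution is that the paper's Definition of elliptical symmetry is dimensionally ambiguous (it writes $A\vect{Y}$ for a $k \times m$ matrix $A$ with $A^{\top}A = \Sigma$, which only parses if the intended representation is $\vect{X} \stackrel{d}{=} A^{\top}\vect{Y} + \vect{\mu}$ with $\vect{Y}$ spherical in $\mathbb{R}^k$); your hedge about ``the appropriate product of $A$ with its transpose'' is the right instinct, and a final write-up should fix one convention and carry the transposes through consistently. With that cosmetic adjustment your proof is complete and self-contained, which is arguably preferable to the paper's bare citation.
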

\begin{proof}
See Section 1.2.4 in \citet{frahm2004}.
\end{proof}

\begin{proposition}
\label{sm:prop:sym:sub:central}
If $\vect{X}$ is centrally symmetric about $\vect{\mu} \in \mathbb{R}^m$, then $\vect{Y} = B_q \vect{X}$ is centrally symmetric about $B_q \vect{\mu}$.
\end{proposition}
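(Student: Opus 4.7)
The plan is to reduce the claim to a direct application of the defining property of central symmetry of $\vect{X}$, together with the elementary fact that equality in distribution is preserved under any (measurable) linear map.

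First I would unpack the conclusion to be shown: by definition, $\vect{Y} = B_q \vect{X}$ is centrally symmetric about $B_q \vect{\mu}$ if and only if $\vect{Y} - B_q \vect{\mu} \stackrel{d}{=} B_q \vect{\mu} - \vect{Y}$, which, by linearity of $B_q$, is the same as
\begin{equation*}
B_q(\vect{X} - \vect{\mu}) \stackrel{d}{=} B_q(\vect{\mu} - \vect{X}).
\end{equation*}
Next I would invoke the hypothesis that $\vect{X}$ is centrally symmetric about $\vect{\mu}$, which gives $\vect{X} - \vect{\mu} \stackrel{d}{=} \vect{\mu} - \vect{X}$. Since if two random vectors have the same distribution then so do their images under any fixed measurable map (in particular under the linear map $\vect{z} \mapsto B_q \vect{z}$), the required identity in distribution follows immediately.

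There is really no obstacle here: the argument is a one-line consequence of the definition and the invariance of equality in distribution under deterministic linear transformations. The statement is the central-symmetry analogue of Proposition \ref{sm:prop:sym:sub:spherical}, but unlike the spherical case there is no need to manipulate orthogonal group actions or exploit orthogonality of $B$, because central symmetry only involves the single reflection $\vect{z} \mapsto -\vect{z}$, which commutes with every linear map.
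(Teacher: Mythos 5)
Your argument is correct and coincides with the paper's own proof: both rewrite $B_q\vect{X} - B_q\vect{\mu} = B_q(\vect{X}-\vect{\mu})$ by linearity and then push the hypothesis $\vect{X}-\vect{\mu} \stackrel{d}{=} \vect{\mu}-\vect{X}$ through the fixed map $\vect{z}\mapsto B_q\vect{z}$, which preserves equality in distribution. Nothing is missing.
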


\begin{proof}
Since $\vect{X}$ is centrally symmetric $(\vect{X} - \vect{\mu}) \stackrel{d}{=} (\vect{\mu} - \vect{X})$. Therefore,
\begin{align*}
(B_q \vect{X} - B_q \vect{\mu}) & \stackrel{d}{=} B_q(\vect{X} - \vect{\mu}) \\
& \stackrel{d}{=} B_q(\vect{\mu} - \vect{X}) \\
& \stackrel{d}{=} (B_q\vect{\mu} - B_q\vect{X}).
\end{align*}
\end{proof}

\begin{proposition}
If $\vect{X}$ is angularly symmetric about $\vect{\mu} \in \mathbb{R}^m$, then $\vect{Y} = B_q \vect{X}$ is angularly symmetric about $B_q \vect{\mu}$.
\end{proposition}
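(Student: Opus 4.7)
The plan is to reduce the statement to the characterization of angular symmetry as central symmetry of the unit random vector. By definition, $\vect{X}$ is angularly symmetric about $\vect{\mu}$ iff $\vect{U} := (\vect{X}-\vect{\mu})/\norm{\vect{X}-\vect{\mu}}$ is centrally symmetric about $\vect{0}$, that is, $\vect{U} \stackrel{d}{=} -\vect{U}$. The goal is to establish the analogous identity for $B_q \vect{X}$ in place of $\vect{X}$ and $B_q\vect{\mu}$ in place of $\vect{\mu}$.

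The key observation is that by linearity of $B_q$ and the cancellation of the positive scalar $\norm{\vect{X}-\vect{\mu}}$ in numerator and denominator,
\begin{equation*}
\frac{B_q\vect{X}-B_q\vect{\mu}}{\norm{B_q\vect{X}-B_q\vect{\mu}}} \;=\; \frac{B_q(\vect{X}-\vect{\mu})}{\norm{B_q(\vect{X}-\vect{\mu})}} \;=\; \frac{B_q \vect{U}}{\norm{B_q \vect{U}}},
\end{equation*}
whenever the denominators are nonzero. That is, the unit vector associated with $B_q\vect{X}$ is a function of $\vect{X}$ only through the direction $\vect{U}$. Define the measurable map $f(\vect{u}) = B_q\vect{u}/\norm{B_q\vect{u}}$ on $\{\vect{u}\in S^{m-1} : B_q\vect{u}\neq\vect{0}\}$; clearly $f$ is odd, $f(-\vect{u}) = -f(\vect{u})$. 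Applying $f$ to both sides of $\vect{U} \stackrel{d}{=} -\vect{U}$ gives $f(\vect{U}) \stackrel{d}{=} f(-\vect{U}) = -f(\vect{U})$, which is exactly central symmetry of $(B_q\vect{X}-B_q\vect{\mu})/\norm{B_q\vect{X}-B_q\vect{\mu}}$ and hence angular symmetry of $B_q \vect{X}$ about $B_q\vect{\mu}$.

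The only delicate point, and the main obstacle, is the domain of $f$: it is undefined on the set $S^{m-1}\cap S_p$, which corresponds to the event $\{B_q\vect{X}=B_q\vect{\mu}\} = \{\vect{X}-\vect{\mu}\in S_p\}$. Because $S_p$ is invariant under negation, the events $\{\vect{U}\in S_p\}$ and $\{-\vect{U}\in S_p\}$ coincide, so the distributional equality $\vect{U}\stackrel{d}{=}-\vect{U}$ passes to the conditional distributions given $\vect{U}\notin S_p$, and the argument above is applied to these conditional laws. This matches the tacit convention in the definition of angular symmetry, which already requires the ratio $(\vect{X}-\vect{\mu})/\norm{\vect{X}-\vect{\mu}}$ to be well-defined (i.e., $\vect{X}\neq\vect{\mu}$ almost surely, or interpreted up to the atom at the center). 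With this caveat, the proof is essentially a one-line application of the odd map $f$ to the distributional identity characterizing angular symmetry.
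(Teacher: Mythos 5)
Your proof is correct, but it takes a genuinely different route from the paper's. The paper does not argue directly from the definition: it invokes the characterization of angular symmetry (Theorem 2.2 in Zuo and Serfling, 2000) as the equality $\bP(\vect{u}^\top(\vect{X}-\vect{\mu}) \ge 0) = \bP(\vect{u}^\top(\vect{\mu}-\vect{X}) \ge 0)$ for all unit vectors $\vect{u} \in \mathbb{R}^m$, and then observes that every unit vector $\vect{u}_q \in \mathbb{R}^q$ pulls back through $B_q$ to the unit vector $\vect{u}_q^\top B_q/\norm{\vect{u}_q^\top B_q}$ in $\mathbb{R}^m$, so the required equality in $\mathbb{R}^q$ is a special case of the one in $\mathbb{R}^m$. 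You instead push the centrally symmetric direction vector $\vect{U}$ forward through the odd map $f(\vect{u}) = B_q\vect{u}/\norm{B_q\vect{u}}$ and use that an odd measurable map carries central symmetry to central symmetry. What the paper's route buys is that all the delicacy about the event $\{B_q(\vect{X}-\vect{\mu}) = \vect{0}\}$ is absorbed into the cited equivalence, making the proof a two-line computation with linear functionals; what your route buys is self-containedness, since you never need the projection characterization, at the price of handling explicitly the set $S^{m-1}\cap S_p$ where $f$ is undefined. You do handle it correctly by noting that this set is invariant under negation; an even cleaner variant is to extend $f$ by $f(\vect{u})=\vect{0}$ there, which keeps $f$ odd and lets you skip the conditioning entirely.
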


\begin{proof}
By Theorem 2.2 in \citet{zuoserfling2000b} angular symmetry is equivalent to 
\begin{equation*}
\bP( \vect{u}^\top (\vect{X} - \vect{\mu}) \geq 0) = \bP( \vect{u}^\top (\vect{\mu} - \vect{X}) \geq 0)
\end{equation*}
for all unit vectors $\vect{u} \in \mathbb{R}^m$. Hence, to prove that $\vect{Y}$ is angular symmetric about $B_q \vect{\mu}$ it is enough to show that 
\begin{equation*}
  \bP( \vect{u}_q^\top (B_q\vect{X} - B_q\vect{\mu}) \geq 0) = \bP( \vect{u}_q^\top (B_q\vect{\mu} - B_q\vect{X}) \geq 0)
\end{equation*}
for all unit vectors $\vect{u}_q \in \mathbb{R}^q$. Since $\frac{\vect{u}_q^\top B_q}{\norm{\vect{u}_q^\top B_q}}$ is a unit vector in $\mathbb{R}^m$ we obtain
\begin{align*}
\bP( \vect{u}_q^\top (B_q\vect{X} - B_q\vect{\mu}) \geq 0)
&= \bP( \frac{\vect{u}_q^\top B_q}{\norm{\vect{u}_q^\top B_q}} (\vect{X} - \vect{\mu}) \geq 0) \\
&= \bP( \frac{\vect{u}_q^\top B_q}{\norm{\vect{u}_q^\top B_q}} (\vect{\mu} - \vect{X}) \geq 0) \\
&= \bP( \vect{u}_q^\top (B_q\vect{\mu} - B_q\vect{X}) \geq 0).
\end{align*}
\end{proof}

\begin{proposition}
\label{sm:prop:sym:sub:halfspace}
If $\vect{X}$ is halfspace symmetric about $\vect{\mu} \in \mathbb{R}^m$, then $\vect{Y} = B_q \vect{X}$ is halfspace symmetric about $B_q \vect{\mu}$.
\end{proposition}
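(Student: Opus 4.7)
The plan is to pull back any halfspace in $\mathbb{R}^q$ through an arbitrary such halfspace to a halfspace in $\mathbb{R}^m$ and then invoke the hypothesis directly. The key algebraic fact is that $B_q$ has orthonormal rows, so $B_q B_q^\top = I_q$, which means that $B_q^\top$ maps unit vectors of $\mathbb{R}^q$ to unit vectors of $\mathbb{R}^m$.

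Concretely, I would fix an arbitrary closed halfspace $H'_{B_q \vect{\mu}} \subset \mathbb{R}^q$ with $B_q \vect{\mu}$ on its boundary, and write it as
\begin{equation*}
H'_{B_q \vect{\mu}} = \{ \vect{z} \in \mathbb{R}^q \, : \, \vect{u}_q^\top (\vect{z} - B_q \vect{\mu}) \geq 0 \}
\end{equation*}
for some unit vector $\vect{u}_q \in \mathbb{R}^q$. The event $\{ B_q \vect{X} \in H'_{B_q \vect{\mu}} \}$ equals $\{ \vect{u}_q^\top B_q (\vect{X} - \vect{\mu}) \geq 0 \} = \{ (B_q^\top \vect{u}_q)^\top (\vect{X} - \vect{\mu}) \geq 0 \}$. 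Since $\norm{B_q^\top \vect{u}_q}^2 = \vect{u}_q^\top B_q B_q^\top \vect{u}_q = \vect{u}_q^\top \vect{u}_q = 1$, the vector $\vect{u} := B_q^\top \vect{u}_q$ lies in $S^{m-1}$, and the set
\begin{equation*}
H_{\vect{\mu}} = \{ \vect{x} \in \mathbb{R}^m \, : \, \vect{u}^\top (\vect{x} - \vect{\mu}) \geq 0 \}
\end{equation*}
is a closed halfspace of $\mathbb{R}^m$ with $\vect{\mu}$ on its boundary. By halfspace symmetry of $\vect{X}$, $\bP(\vect{X} \in H_{\vect{\mu}}) \geq 1/2$, so $\bP(\vect{Y} \in H'_{B_q \vect{\mu}}) \geq 1/2$, as required.

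There is no real obstacle here; the only point that is not purely bookkeeping is the unit-norm verification $\norm{B_q^\top \vect{u}_q} = 1$, which uses precisely the orthonormal-row condition on $B_q$ and was the same ingredient that appeared in the angular-symmetry proof just above. The argument also makes transparent why the statement is invariant to the choice of basis $B_q$ (orthogonal reparametrizations $B_q \mapsto U B_q$ only rotate the unit vector $\vect{u}_q$, leaving the family of halfspaces $H_{\vect{\mu}}$ unchanged).
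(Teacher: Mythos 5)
Your proof is correct. It differs from the paper's in one respect worth noting: the paper does not argue directly from the definition of halfspace symmetry but instead invokes the characterization (Theorem 2.4 of Zuo and Serfling) that $\vect{X}$ is halfspace symmetric about $\vect{\mu}$ if and only if $\vect{u}^\top \vect{\mu} = \Med(\vect{u}^\top \vect{X})$ for every unit vector $\vect{u} \in \mathbb{R}^m$, and then transfers this median identity to the projected variable using scalar equivariance of the median. You instead pull back an arbitrary closed halfspace $H'_{B_q\vect{\mu}} \subset \mathbb{R}^q$ to the closed halfspace of $\mathbb{R}^m$ with unit normal $\vect{u} = B_q^\top \vect{u}_q$ and boundary point $\vect{\mu}$, and apply the definition directly. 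The shared kernel of both arguments is the observation that $B_q^\top$ carries unit vectors of $\mathbb{R}^q$ to unit vectors of $\mathbb{R}^m$ (equivalently, that $\vect{u}_q^\top B_q$ has norm one), which the paper writes as the normalization $\vect{u}_q^\top B_q / \norm{\vect{u}_q^\top B_q}$ without explicitly noting that the norm is $1$. Your route is more self-contained, since it avoids the external equivalence result; the paper's route has the minor advantage of treating angular and halfspace symmetry by the same template (both via directional characterizations from Zuo and Serfling). Either way the content is the same and your unit-norm verification is exactly the point that needs checking.
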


\begin{proof}
Recall that halfspace symmetry is equivalent to the existence of a median such that
\begin{equation*}
\vect{u}^\top \vect{\mu} = \Med(\vect{u}^\top \vect{X})
\end{equation*}
for all unit vector $\vect{u} \in \mathbb{R}^m$ (see Theorem 2.4 in \citet{zuoserfling2000b}). Hence, to prove that $\vect{Y}$ is halfspace symmetric about $B_q \vect{\mu}$, it is sufficient to show that 
\begin{equation*}
\vect{u}_q^\top B_q \vect{\mu} = \Med(\vect{u}_q^\top B_q \vect{X})
\end{equation*}
for all unit vectors $\vect{u}_q \in \mathbb{R}^q$. Recall that the median is equivariant to scalar transformations, i.e.\ $\Med(a \vect{X}) = a \Med(\vect{X})$ for all $a \in \mathbb{R}$. Since $\frac{\vect{u}_q^\top B_q}{\norm{\vect{u}_q^\top B_q}}$ is a unit vector in $\mathbb{R}^m$, it follows that
\begin{align*}
\frac{1}{\norm{\vect{u}_q^\top B_q}} \vect{u}_q^\top B_q \vect{\mu} &=
\frac{\vect{u}_q^\top B_q}{\norm{\vect{u}_q^\top B_q}} \vect{\mu} \\
&= \Med \biggl(\frac{\vect{u}_q^\top B_q}{\norm{\vect{u}_q^\top B_q}} \vect{X} \biggr)\\
&=\frac{1}{\norm{\vect{u}_q^\top B_q}} \Med(\vect{u}_q^\top B_q \vect{X}).
\end{align*}
\end{proof}
  
\section{Statistical data depths}
\label{sm:sec:depthproperties}

In this section we recall the definition of statistical data depth. Let $\mathcal{F}$ be the class of distribution functions on $\mathbb{R}^m$. For a random variable $\vect{X} \in \mathbb{R}^m$ with distribution function $F \in \mathcal{F}$, an $m \times m$ nonsingular matrix $A$ and a vector $\vect{b} \in \mathbb{R}^m$, we denote by $F_{A,\vect{b}}$ the distribution function of $A \vect{X}+\vect{b}$. A statistical data depth $d(\cdot,\cdot)$ is a bounded function from $\mathcal{F} \times \mathbb{R}^m$ to $\mathbb{R}_{+}=[0,\infty)$ that satisfies the following properties \citep{liu1990,zuoserfling2000}:
\begin{enumerate}[label=(\textbf{D\arabic*})]
\item \label{sm:PropDepthAffineInvariance} Affine invariance: $d(\vect{x}, F)=d(A \vect{x}+ \vect{b}, F_{A,\vect{b}})$;

The depth of a point does not depend on the underlying coordinate system and the scales of the underlying measurements.  

\item \label{sm:PropDepthMaximalityAtCenter} Maximality at center: if $F$ is ``symmetric'' about $\vect{\mu}$ then $d(\vect{x}, F) \leq d(\vect{\mu}, F)$ for all $\vect{x}$;
  
For a distribution having a uniquely defined ``center'' (e.g., the point of symmetry w.r.t. some notion of symmetry), the data depth attains maximum value at the center.

\item \label{sm:PropDepthMonotonicity} Monotonicity: let $\vect{\mu} = \arg\max_{\vect{x} \in \mathbb{R}^m} d(\vect{x}, F)$, then 
\begin{equation*}
d(\vect{x}, F) \le d(\vect{\mu} + \alpha (\vect{x} - \vect{\mu}), F), \quad \alpha \in [0,1];
\end{equation*}

The depth of a point is monotonically non-increasing along any ray from the deepest point (i.e.\ the point with maximum depth).

\item \label{sm:PropDepthZero} Approaching zero: 
\begin{equation*}
d(\vect{x}, F) \rightarrow 0 \text{ as } \norm{\vect{x}} \to \infty.
\end{equation*}
\end{enumerate}

Among the most well-known examples of statistical data depths are halfspace and simplicial depth \citep{tukey1975,liu1990}. The halfspace depth of a point $\vect{x} \in \mathbb{R}^m$ with respect to the distribution $F$ is given by
\begin{equation*}
d_{H}(\vect{x},F)=\inf_{\vect{u} \in S^{m-1}} \bP(\vect{X} \in H_{\vect{x},\vect{u}}),
\end{equation*}
where $S^{m-1}$ is the unit sphere in $\mathbb{R}^m$ and $H_{\vect{x},\vect{u}} = \{ \vect{y} \in \mathbb{R}^m : \, \langle \vect{u}, \vect{y} \rangle \leq \langle \vect{u}, \vect{x} \rangle \}$ is the closed halfspace with boundary point $\vect{x} \in \mathbb{R}^m$ and outer normal $\vect{u} \in S^{m-1}$. The simplicial depth of $\vect{x}$ with respect to $F$ is
\begin{equation*}
d_\Delta(\vect{x},F) = \bP(\vect{x} \in \Delta[\vect{X}_1,\dots,\vect{X}_{m+1}]),
\end{equation*}
where $\vect{X}_i$ are independent and identically distributed (i.i.d.)\ with distribution $F$ and $\Delta[\vect{x}_1,\dots,\vect{x}_{m+1}]$ is the closed simplex with vertices $\vect{x}_1,\dots,\vect{x}_{m+1}$.

\begin{remark}
In some cases \ref{sm:PropDepthMonotonicity} is required to hold only under \ref{sm:PropDepthMaximalityAtCenter}, e.g. simplicial depth.
\end{remark}

Other depth notions were originally introduced as a measure of dispersion or outlyingness. These include simplicial volume depth \citep{oja1983} and Mahalanobis depth \citep{mahalanobis1936}. Let $\lambda(\cdot)$ be the Lebesgue measure on $\mathbb{R}^m$ and consider the average simplex volume
\begin{equation*}
V(\vect{x},F) = \E[\lambda(\Delta[\vect{x},\vect{X}_1,\dots,\vect{X}_{m}])],
\end{equation*}
where $\vect{X}_i$ are i.i.d.\ with distribution $F$. The simplicial volume depth $d_{V}(\vect{x},F)$ of $\vect{x}$ with respect to $F$ is given by a monotone decreasing transformation of $V(\vect{x},F)$. Finally, if $F$ admits mean $\vect{\mu}$ and invertible covariance matrix $\Sigma$, the Mahalanobis depth $d_{M}(\vect{x},F)$ of $\vect{x}$ with respect to $F$ is given by a monotone decreasing transformation of the Mahalanobis distance
\begin{equation*}
M(\vect{x}, F) = \sqrt{(\vect{x}-\vect{\mu})^\top \Sigma^{-1} (\vect{x}-\vect{\mu})}.
\end{equation*}
See \citep{zuoserfling2000} for further details. Simplicial volume depth is only invariant to rigid-body transformations, i.e.\ \ref{sm:PropDepthAffineInvariance} holds for orthogonal matrices only.

\section{Dispersion measures based on statistical data depths}
\label{sm:sec:dispersion}

\subsection{Dispersion measures}
\label{sm:subsection_dispersion_measures}

Recall that $\mathcal{F}$ is the set of all distribution functions on $\mathbb{R}^m$ and $\lambda(\cdot)$ is the Lebesgue measure. Let $d(\cdot , \cdot)$ be a statistical data depth and for $\alpha >0$ let $R_{\alpha}(F) = \{ \vect{y} \in \mathbb{R}^m \, : \, d(\vect{y},F) \geq \alpha \}$ be the corresponding $\alpha$-trimmed region.

\begin{definition}[\citet{zuoserfling2000d}]
\label{sm:definition_more_scattered}
The distribution $F \in \mathcal{F}$ is \emph{more scattered} than $G \in \mathcal{F}$ if $\lambda(R_{\alpha}(F)) \geq \lambda(R_{\alpha}(G))$ for all $\alpha>0$.
\end{definition}

\citet{oja1983} defines dispersion measures (or scatter measures) as follows (see also \citet{zuoserfling2000d} and \citet{romanazzi2009}).

\begin{definition}[\citet{oja1983}] \label{sm:definition_dispersion_measure}
The function $\psi: \mathcal{F} \rightarrow \mathbb{R}_{+}$ is a dispersion measure if
\begin{enumerate}
\item $\psi(F) \ge \psi(G)$ for any $F$ more scattered than $G$.
\item $\psi(F_{A, \vect{b}}) = \abs{\det(A)} \psi(F)$ for any $m \times m$ matrix $A$ and $\vect{b} \in \mathbb{R}^m$.
\end{enumerate}
\end{definition}

For any $\alpha>0$ the function $\psi_{\alpha}(F)=\lambda(R_{\alpha}(F))$ is a dispersion measure in the sense of Definitions \ref{sm:definition_more_scattered}-\ref{sm:definition_dispersion_measure} (see Theorem 2.1 in \citet{zuoserfling2000d}).

\subsection{Properties of the dispersion measure $\sigma(\cdot)$}
\label{sm:subsection_properties_of_dispersion_measures}

In this subsection we summarize from \citet{romanazzi2009} some important properties of $\sigma(\cdot)$ (see Definition \ref{def:dispersion} in the main paper). We first notice that, since data depths are affine invariant, $\sigma(\cdot)$ is a dispersion measure in the sense of Definitions \ref{sm:definition_more_scattered}-\ref{sm:definition_dispersion_measure}. If $F$ is concentrated on a $k$-dimensional subspace $S_{k,\vect{\mu}} \subset \mathbb{R}^m$, where $0 \leq k \leq m-1$, then the (halfspace and simplicial) depth of $F$ is $0$ outside $S_{k,\vect{\mu}}$, from which follows that $\sigma(F)=0$ \citep{romanazzi2009}[Remark 11]. We denote by
\begin{equation*}
  \sigma_H(F) = \int_{\mathbb{R}^m} d_H(\vect{x}, F) \ d\vect{x} \quad \text{and} \quad \sigma_\Delta(F) = \int_{\mathbb{R}^m} d_\Delta(\vect{x}, F) \ d\vect{x}
\end{equation*}
the dispersion measure of $F$ based on the halfspace and simplicial depth. Proposition 9 in \citet{romanazzi2009} shows that $\sigma_\Delta(F)$ equals to multivariate Gini coefficient, that is,
\begin{equation} \label{sm:multivariate_gini_coefficient}
\sigma_\Delta(F)=\E[\lambda(\Delta[\vect{X}_1,\dots,\vect{X}_{m+1}])],
\end{equation}
where $\vect{X}_i$ are i.i.d.\ with distribution $F$. Recall that, for univariate distributions $F$,
\begin{equation*}
  d_\Delta(x,F) = 2 F(x)(1-F(x)) + (F^{2}(x) - F^{2}(x^{-})),
\end{equation*}  
where $F(x^{-})$ is the left limit of $F$ at the point $x$ \citep{nagy2023}. Hence, using that $F$ is continuous almost everywhere,
\begin{equation*}
  \sigma_\Delta(F) = 2 \int_{-\infty}^{\infty} F(x) (1-F(x)) \, dx,
\end{equation*}
and, by \eqref{sm:multivariate_gini_coefficient}, it follows that
\begin{equation*}
\E\left[ \abs{X_{1}-X_{2}} \right] =  2 \int_{-\infty}^{\infty} F(x) (1-F(x)) \, dx,
\end{equation*}
which is a well-known computational formula for Gini's mean difference \citep{romanazzi2009}[Remark 10].

Let $h \, : \, \mathbb{R}_{+} \to \mathbb{R}$ a decreasing and bounded function such that $\tau_{0}=\int_{\mathbb{R}^m} h(\norm{\vect{x}}) \, d \vect{x}$ exists and is finite. If $F \in \mathcal{F}$ has mean $\vect{\mu}$ and covariance matrix $\Sigma$, then by \citet{romanazzi2009}[Proposition 14],
\begin{equation*}
  \sigma_M(F)=\int_{\mathbb{R}^m} h(M(\vect{x},F)) \, d\vect{x} = \tau_{0} (\det(\Sigma))^{1/2}.
\end{equation*}
In particular, Wilks' generalized variance \citep{wilks1960} is the expected squared volume of a random simplex, that is,
\begin{equation*}
  W(F) = (p+1)^{-1} \E[ \lambda^2(\Delta[\vect{X}_1,\dots,\vect{X}_{m+1}])],
\end{equation*}
and it satisfies
\begin{equation*}
  W(F) = \det(\Sigma) = (\sigma_M(F)/\tau_0)^2.
\end{equation*}
In the next subsection, we show that, under suitable tail and moment conditions on the underlying distribution $F$, the dispersion measures $\sigma_H(F)$ and $\sigma_\Delta(F)$ are finite.

\subsection{Finiteness of the dispersion measure $\sigma(\cdot)$}
\label{sm:subsection_finiteness_of_dispersion_measure}

We begin with a result concerning the univariate halfspace and simplicial depth.

\begin{proposition} \label{sm:Prop1dDepFin}
Let $m=1$ and $X \sim F$. Then, $\sigma_H(F) < \infty$ if and only if $\E\vert X \vert  < \infty$. Additionally, if $\E\vert X \vert  < \infty$, then $\sigma_\Delta(F) < \infty$.
\end{proposition}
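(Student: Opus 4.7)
The plan is to exploit the explicit one-dimensional formulas for $d_H$ and $d_\Delta$. In $\mathbb{R}$, the halfspace depth reduces to $d_H(x, F) = \min(F(x), 1 - F(x^{-}))$, which equals $\min(F(x), 1 - F(x))$ Lebesgue-almost everywhere since the set of atoms of $F$ is at most countable. I would split the integration at a median $M$ of $F$: on $(-\infty, M]$ the minimum equals $F(x)$, and on $[M, \infty)$ it equals $1 - F(x)$, giving
\begin{equation*}
\sigma_H(F) = \int_{-\infty}^{M} F(x) \, dx + \int_{M}^{\infty} (1 - F(x)) \, dx.
\end{equation*}
Recognizing these two terms as $\E[(X - M)^{-}]$ and $\E[(X - M)^{+}]$ via the standard tail-integral identities for expectations, one obtains $\sigma_H(F) = \E\abs{X - M}$.

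From here the ``if and only if'' drops out in one line from the triangle inequality: $\bigl| \E\abs{X - M} - \E\abs{X} \bigr| \le \abs{M}$, so $\sigma_H(F) < \infty$ if and only if $\E\abs{X} < \infty$ (and a finite median always exists for a distribution on $\mathbb{R}$, so this is well defined). For the simplicial part, the quickest route is the identity already recorded in the paper, equation \eqref{sm:multivariate_gini_coefficient}: $\sigma_\Delta(F) = \E[\lambda(\Delta[X_1, X_2])] = \E\abs{X_1 - X_2}$, where $X_1$ and $X_2$ are i.i.d.\ copies of $X$. The bound $\E\abs{X_1 - X_2} \le 2 \E\abs{X}$ then immediately yields $\sigma_\Delta(F) < \infty$ whenever $\E\abs{X} < \infty$. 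Alternatively, one can argue directly from the explicit form $d_\Delta(x, F) = 2F(x)(1 - F(x)) + (F^2(x) - F^2(x^{-}))$ recalled earlier: pointwise $2F(x)(1 - F(x)) \le 2\min(F(x), 1 - F(x))$ since $\max(F(x), 1 - F(x)) \le 1$, while the atom term integrates to zero against Lebesgue measure, giving $\sigma_\Delta(F) \le 2 \sigma_H(F)$.

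The only subtlety I anticipate is the bookkeeping at atoms of $F$, where $1 - F(x^{-})$ and $1 - F(x)$ differ and the inequalities $F(x) \le 1/2$ versus $F(x^{-}) \le 1/2$ need to be distinguished when identifying which term of the minimum is active near the median. Because the discrepancy occurs only on a countable set, it does not affect any Lebesgue integral and poses no genuine obstacle; the argument is essentially just reading off the well-known representation $\E\abs{X - M} = \int F \wedge (1 - F)\, dx$ and applying the triangle inequality.
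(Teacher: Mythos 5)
Your proof is correct and follows essentially the same route as the paper's: split the integral $\int \min(F, 1-F^{-})$ at a median, identify the two pieces with tail integrals of expectations, and bound $\sigma_\Delta$ by $2\sigma_H$ (your alternative argument is verbatim the paper's; your primary route via the Gini identity $\E\abs{X_1-X_2}\le 2\E\abs{X}$ is an equally valid minor variant). The only cosmetic difference is that you package the halfspace computation as $\sigma_H(F)=\E\abs{X-M}$ plus the triangle inequality, whereas the paper writes it as $\E\abs{X}$ plus the bounded correction $\int_m^0(1-2F)$; these are the same bookkeeping.
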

\begin{proof}
We write $X=X^{+}-X^{-}$, where $X^{+}=\max(X,0)$ and $X^{-}=-\min(X,0)$, and observe that $\vert X \vert = X^{+} + X^{-}$. Then,
\begin{align} \label{sm:1dimExpAbsVal}
\E\vert X \vert = \E X^{+} + \E X^{-} &= \int_{0}^{\infty} \bP(X^{+}>x) \, dx + \int_{0}^{\infty} \bP(X^{-}>x) \, dx \nonumber \\
&= \int_{0}^{\infty} 1-F(x) \, dx + \int_{-\infty}^{0} F(x) \, dx.
\end{align}
Let $m$ be a median for $X$, then
\begin{align}  \label{sm:1dimDispHalfDep}
\sigma_{H}(F) &= \int_{-\infty}^{\infty} \min(F(x),1-F(x^{-})) \, dx \nonumber \\
&= \int_{-\infty}^{m} F(x) \, dx + \int_{m}^{\infty} 1-F(x) \, dx \nonumber \\
&= \int_{-\infty}^{0} F(x) \, dx + \int_{0}^{\infty} 1-F(x) \, dx + \int_{m}^{0} 1 - 2 F(x) \, dx,
\end{align}
where for $m>0$
\begin{equation*}
\int_{m}^{0} 1 - 2 F(x) \, dx = - \int_{0}^{m} 1 - 2 F(x) \, dx.
\end{equation*}
Since $\int_{m}^{0} 1- 2F(x) \, dx < \infty$, the first claim follows from \eqref{sm:1dimExpAbsVal} and \eqref{sm:1dimDispHalfDep}. For the second claim, we have that
\begin{align*}
  \sigma_\Delta(F) &= 2 \int_{-\infty}^{\infty} F(x) (1-F(x)) \, dx \\
&= 2 \left[ \int_{-\infty}^{m} F(x) (1-F(x)) \, dx + \int_{m}^{\infty} F(x) (1-F(x)) \, dx \right] \\
&\leq 2 \left[ \int_{-\infty}^{m} F(x) \, dx + \int_{m}^{\infty} 1-F(x) \, dx \right],
\end{align*}
where the last line is twice the value in equation \eqref{sm:1dimDispHalfDep}. Hence \eqref{sm:1dimExpAbsVal} implies that $\sigma_\Delta(F)<\infty$.
\end{proof} \\

 In the following, we replace, for convenience, the cumulative distribution function $F$ by the corresponding probability distribution $P$ and write, for instance, $d_H(\cdot,P)$ and $\sigma_H(P)$ for $d_H(\cdot,F)$ and $\sigma_H(F)$. The next result is an extension of Proposition \ref{sm:Prop1dDepFin} to multivariate random variables.

\begin{proposition} \label{sm:PropDispFinHalfSimp}
Let $\vect{X} \sim P$ and $\bP(\norm{\vect{X}} \geq \norm{\cdot})$ be an integrable function. Then, $\sigma_H(P), \sigma_\Delta(P) < \infty$.
\end{proposition}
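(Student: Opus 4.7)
The plan is to bound both $d_H(\vect{x},P)$ and $d_\Delta(\vect{x},P)$ pointwise by (a constant multiple of) $\bP(\norm{\vect{X}} \geq \norm{\vect{x}})$, and then integrate, using the integrability hypothesis directly.

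For the halfspace depth, I would instantiate the infimum in the definition with the unit vector $\vect{u} = \vect{x}/\norm{\vect{x}}$ pointing along $\vect{x}$ (for $\vect{x} \neq \vect{0}$, which is enough since we are integrating against Lebesgue measure). Replacing $\vect{u}$ by $-\vect{u}$ so that the closed halfspace $H_{\vect{x},-\vect{u}}$ lies on the ``far side'' from the origin, I get
\begin{equation*}
d_H(\vect{x},P) \leq \bP\bigl( \langle \vect{u},\vect{X}\rangle \geq \norm{\vect{x}} \bigr) \leq \bP\bigl( \norm{\vect{X}} \geq \norm{\vect{x}} \bigr),
\end{equation*}
where the second inequality is Cauchy--Schwarz. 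Integrating over $\vect{x}\in\mathbb{R}^m$ and using the hypothesis that $\bP(\norm{\vect{X}} \geq \norm{\cdot})$ is Lebesgue integrable yields $\sigma_H(P) < \infty$.

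For the simplicial depth, the key geometric observation is that if $\vect{x}$ is a convex combination of $\vect{X}_1,\ldots,\vect{X}_{m+1}$, then $\norm{\vect{x}} \leq \max_i \norm{\vect{X}_i}$ by the triangle inequality. Hence $\{\vect{x} \in \Delta[\vect{X}_1,\ldots,\vect{X}_{m+1}]\} \subseteq \{\max_i \norm{\vect{X}_i} \geq \norm{\vect{x}}\}$, and a union bound gives
\begin{equation*}
d_\Delta(\vect{x},P) \leq \bP\Bigl( \max_{1 \leq i \leq m+1} \norm{\vect{X}_i} \geq \norm{\vect{x}} \Bigr) \leq (m+1) \, \bP\bigl( \norm{\vect{X}} \geq \norm{\vect{x}} \bigr),
\end{equation*}
and integration delivers $\sigma_\Delta(P) \leq (m+1) \int_{\mathbb{R}^m} \bP(\norm{\vect{X}} \geq \norm{\vect{x}}) \, d\vect{x} < \infty$.

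There is no real obstacle here; the only subtlety is selecting the correct direction $\vect{u}$ in the halfspace depth so that the chosen halfspace is the one whose probability decays as $\norm{\vect{x}} \to \infty$, and spotting that convex hulls are norm-bounded by their vertices so a union bound suffices for the simplicial case.
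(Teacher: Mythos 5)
Your proposal is correct and matches the paper's own proof essentially line for line: the same choice of direction $\vect{u}=-\vect{x}/\norm{\vect{x}}$ with Cauchy--Schwarz for the halfspace depth, and the same convex-combination bound plus union bound yielding the factor $(m+1)$ for the simplicial depth, followed by integration of the dominating function.
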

\begin{proof}
For $\vect{x} \neq \vect{0}$, by choosing the direction $u=-\frac{\vect{x}}{\norm{\vect{x}}}$ and using Cauchy-Schwarz inequality, we have that
\begin{equation} \label{bound_halfspace_depth}
d_H(\vect{x},P) \leq \bP( \vect{X} \in H_{\vect{x},-\frac{\vect{x}}{\norm{\vect{x}}}}) \leq \bP(\norm{\vect{X}} \geq \norm{\vect{x}}).
\end{equation}
For the simplicial depth, notice that $\vect{x} \in \Delta[\vect{x}_1,\dots,\vect{x}_{m+1}]$ if and only if there exist $\alpha_i \geq 0$ with $\sum_{i=1}^{m+1} \alpha_i = 1$ such that $\vect{x}=\sum_{i=1}^{m+1}\alpha_i \vect{x}_i$. In particular, $\norm{\vect{x}} \leq \sum_{i=1}^{m+1} \alpha_i \norm{\vect{x}_i} \leq \max_{i=1,\dots,m+1} \norm{\vect{x}_i}$ and
\begin{equation} \label{bound_simplicial_depth}
\begin{aligned}
d_\Delta(\vect{x},P) &\leq \bP( \max_{i=1,\dots,m+1} \norm{\vect{X}_i} \geq \norm{\vect{x}}) =\bP(\cup_{i=1}^{m+1} \{ \norm{\vect{X}_i} \geq \norm{\vect{x}} \} ) \\
&\leq (m+1) \bP( \norm{\vect{X}} \geq \norm{\vect{x}}),
\end{aligned}
\end{equation}
where $\vect{X}_{i} \sim P$. Since $\bP(\norm{\vect{X}} \geq \norm{\cdot})$ is integrable, using \eqref{bound_halfspace_depth} and \eqref{bound_simplicial_depth}, we conclude that both $\sigma_{H}(P)$ and $\sigma_\Delta(P)$ are finite.
\end{proof} \\

The assumptions in Proposition \ref{sm:PropDispFinHalfSimp} are clearly satisfied if $P$ has bounded support. More generally, the absolute integrability requires a sufficiently quick decay of the tail probabilities.

\begin{definition}[Polynomial decay] \label{sm:DefPolDecay}
Let $\Pi$ be a class of probability measures. $\Pi$ is said to decay polynomially if there exist constants $C>0$ and $\alpha>1$ such that
\begin{equation} \label{sm:EqPolDecayFam}
\bP(\norm{\vect{X}} \geq t) \leq \frac{C}{(1+t)^{\alpha}},
\end{equation}
for all $\vect{X} \sim P \in \Pi$ and all $t \geq 0$.
\end{definition}

\begin{remark} \label{remark_polynomial_decay}
By Markov's inequality condition \eqref{sm:EqPolDecayFam} is satisfied if, for some $\alpha>1$, $\E[\norm{\vect{X}}^{\alpha}] \leq C_0 < \infty$ for all $\vect{X} \sim P \in \Pi$. The constants are given by $C=2^{\alpha} \max(1,C_0)$ and $\alpha>1$.
\end{remark}

\begin{corollary} \label{sm:CorDispFinHalfSimp}
If $\{ P \}$ decays polynomially, then $\sigma_{H}(P),\sigma_\Delta(P)<\infty$.
\end{corollary}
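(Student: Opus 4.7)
The plan is to reduce the statement directly to Proposition \ref{sm:PropDispFinHalfSimp}, which already tells us that $\sigma_H(P)$ and $\sigma_\Delta(P)$ are finite whenever the scalar function $\vect{x} \mapsto \bP(\norm{\vect{X}} \geq \norm{\vect{x}})$ is Lebesgue integrable on $\mathbb{R}^m$. So the entire task is to verify this integrability using only the polynomial tail bound stated in Definition \ref{sm:DefPolDecay}.

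First, I would invoke the polynomial decay assumption pointwise to dominate
\begin{equation*}
\bP(\norm{\vect{X}} \geq \norm{\vect{x}}) \;\leq\; \frac{C}{(1+\norm{\vect{x}})^{\alpha}}, \qquad \vect{x} \in \mathbb{R}^m,
\end{equation*}
so that by monotonicity of the Lebesgue integral it is enough to show that $\vect{x} \mapsto (1+\norm{\vect{x}})^{-\alpha}$ is integrable on $\mathbb{R}^m$. Second, I would pass to spherical coordinates in order to reduce this to a one-dimensional integral:
\begin{equation*}
\int_{\mathbb{R}^m} \frac{d\vect{x}}{(1+\norm{\vect{x}})^{\alpha}} \;=\; \omega_m \int_0^{\infty} \frac{r^{m-1}}{(1+r)^{\alpha}}\,dr,
\end{equation*}
where $\omega_m$ denotes the surface area of the unit sphere in $\mathbb{R}^m$. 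The integrand is bounded near the origin (since $m-1 \geq 0$) and behaves like $r^{m-1-\alpha}$ for large $r$, so convergence hinges entirely on the decay rate at infinity.

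The main subtlety is the exponent comparison: the radial integral above converges precisely when $\alpha > m$, which is stronger than the condition $\alpha > 1$ literally stated in Definition \ref{sm:DefPolDecay}. Consequently the natural reading of the corollary is that the polynomial decay rate is implicitly taken large enough for the ambient dimension, i.e.\ $\alpha > m$; this is consistent with the one-dimensional case, where $\alpha > 1$ already matches the condition in Proposition \ref{sm:Prop1dDepFin}. Under this reading, chaining the pointwise tail bound with the spherical-coordinates computation and feeding the result into Proposition \ref{sm:PropDispFinHalfSimp} yields $\sigma_H(P), \sigma_\Delta(P) < \infty$ with essentially no further work. No step is technically hard; the entire content of the proof is the dimension-dependent integrability check, which is also the only place where care about the exponent is required.
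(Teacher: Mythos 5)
Your argument follows the same route as the paper's proof: bound $\bP(\norm{\vect{X}}\ge\norm{\vect{x}})$ pointwise by $C(1+\norm{\vect{x}})^{-\alpha}$ and feed the resulting integrability into Proposition \ref{sm:PropDispFinHalfSimp}. The difference is that the paper stops at the pointwise bound and simply asserts ``it follows that $\sigma_H(P),\sigma_\Delta(P)<\infty$,'' silently treating $\vect{x}\mapsto(1+\norm{\vect{x}})^{-\alpha}$ as integrable on $\mathbb{R}^m$, whereas you carry out the spherical--coordinates check and observe that this integrability requires $\alpha>m$, not merely the $\alpha>1$ demanded by Definition \ref{sm:DefPolDecay}. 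Your observation is not pedantry: it exposes a genuine gap, and the gap lies in the statement, not just in the proof. For a spherically symmetric $P$ on $\mathbb{R}^m$ ($m\ge 2$) with radial tail $\bP(\norm{\vect{X}}\ge r)\asymp r^{-\alpha}$ for some $1<\alpha\le m$ --- e.g.\ the standard multivariate $t$ with $\nu\in(1,m]$ degrees of freedom, which decays polynomially in the sense of Definition \ref{sm:DefPolDecay} by Remark \ref{remark_polynomial_decay} and Proposition \ref{sm:proposition_finiteness_of_fractional_moments_for_t-distribution} --- the minimizing halfspace at a point $\vect{x}$ is the one orthogonal to the ray from the center, so $d_H(\vect{x},P)=\bP(\langle \vect{u},\vect{X}\rangle\ge\norm{\vect{x}})\asymp\norm{\vect{x}}^{-\alpha}$ and hence $\int_{\mathbb{R}^m}d_H(\vect{x},P)\,d\vect{x}\asymp\int_1^{\infty}r^{m-1-\alpha}\,dr=\infty$. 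So the corollary as literally stated is false for $m\ge2$, and your reading --- that the decay exponent must exceed the ambient dimension --- is the correct repair; under the hypothesis $\alpha>m$ your proof is complete and is exactly the argument the paper intended. (As a side remark, for $\sigma_\Delta$ alone a finite first moment already suffices via the identity $\sigma_\Delta(P)=\E[\lambda(\Delta[\vect{X}_1,\dots,\vect{X}_{m+1}])]$ in \eqref{sm:multivariate_gini_coefficient} combined with Hadamard's inequality and conditioning on one vertex, but that is a genuinely different argument from the one through Proposition \ref{sm:PropDispFinHalfSimp} and does not rescue the halfspace case.)
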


\begin{proof}
  If $\{ P \}$ decays polynomially with constants $C>0$ and $\alpha>1$, then
\begin{equation*}
  \bP(\norm{\vect{X}} \geq \norm{\vect{x}}) \leq \frac{C}{(1+\norm{\vect{x}})^{\alpha}}
\end{equation*}
for all $\vect{x}$. It follows that $\sigma_{H}(P),\sigma_\Delta(P)<\infty$.
\end{proof} \\

In the next subsection we show that fractional moments of order $\alpha > 1$ for multivariate t-distribution are finite whenever $\alpha$ is smaller than the degrees of freedom.  

\subsection{Finiteness of fractional moments for t-distribution} \label{sm:finiteness_of_fractional_moments_for_t-distribution}

Let $\vect{X}$ be a multivariate t-distribution with parameters $\Sigma,\vect{\mu},\nu$, that is, $\vect{X}$ has density
\begin{equation*}
f(\vect{x})=\frac{\Gamma((\nu+m)/2)}{\Gamma(\nu/2) \nu^{m/2} \pi^{m/2} \det(\Sigma)^{1/2}} \left( 1 + \frac{1}{\nu} (\vect{x}-\vect{\mu})^{\top} \Sigma^{-1}  (\vect{x}-\vect{\mu})  \right)^{-(\nu+m)/2},
\end{equation*}
where $\Gamma(\cdot)$ is the gamma function. The next proposition shows that $\vect{X}$ has finite moments of order $\alpha$, for $0 \leq \alpha < \nu$, while the moment of order $\nu$ is infinite. In particular, if $1 < \nu \leq 2$, $\vect{X}$ has infinite variance, but still has finite moments of order $0 \leq \alpha < \nu$.

\begin{proposition} \label{sm:proposition_finiteness_of_fractional_moments_for_t-distribution}
  Let $\alpha \geq 0$. Then
\begin{equation*}
\int \norm{\vect{x} - \vect{\mu}}^{\alpha} f(\vect{x}) \, d\vect{x} < \infty
\end{equation*}
if and only if $\alpha < \nu$.
\end{proposition}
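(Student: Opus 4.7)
The plan is to reduce the integral to a one-dimensional radial integral and then perform a standard comparison at the origin and at infinity. First I would substitute $\vect{y} = \Sigma^{-1/2}(\vect{x}-\vect{\mu})$, which is well-defined because $\Sigma$ is symmetric and positive definite. This transforms the quadratic form into $\norm{\vect{y}}^{2}$, and the Jacobian $\det(\Sigma)^{1/2}$ exactly cancels the $\det(\Sigma)^{-1/2}$ in the normalizing constant. Moreover, since $\Sigma^{1/2}$ is a bounded linear isomorphism, there exist constants $0 < c_{1} \leq c_{2} < \infty$ with $c_{1} \norm{\vect{y}} \leq \norm{\Sigma^{1/2} \vect{y}} \leq c_{2} \norm{\vect{y}}$ for all $\vect{y}$. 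Hence finiteness of the original integral is equivalent to finiteness of
\begin{equation*}
I(\alpha) \;=\; \int_{\mathbb{R}^{m}} \norm{\vect{y}}^{\alpha} \left(1 + \frac{\norm{\vect{y}}^{2}}{\nu}\right)^{-(\nu+m)/2} d\vect{y}.
\end{equation*}

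Next I would pass to spherical coordinates, writing $r = \norm{\vect{y}}$ and absorbing the surface area of the unit sphere $S^{m-1}$ into a positive constant $\omega_{m}$. This gives
\begin{equation*}
I(\alpha) \;=\; \omega_{m} \int_{0}^{\infty} r^{\alpha + m - 1} \left(1 + \frac{r^{2}}{\nu}\right)^{-(\nu+m)/2} dr.
\end{equation*}
Near $r = 0$ the integrand behaves like $r^{\alpha + m - 1}$, which is integrable on $(0,1]$ because $\alpha \geq 0$ and $m \geq 1$ imply $\alpha + m - 1 > -1$. For $r \to \infty$ the integrand is asymptotically equivalent to a positive constant times $r^{\alpha + m - 1 - (\nu + m)} = r^{\alpha - \nu - 1}$, and this tail is integrable on $[1,\infty)$ if and only if $\alpha - \nu - 1 < -1$, i.e.\ $\alpha < \nu$.

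To turn the equivalence into a rigorous two-sided statement, I would bound the integrand above and below on $[1,\infty)$ by positive constant multiples of $r^{\alpha - \nu - 1}$, using that $r^{2}/\nu \leq 1 + r^{2}/\nu \leq (1 + 1/\nu) r^{2}$ whenever $r \geq 1$. The upper bound gives $I(\alpha) < \infty$ when $\alpha < \nu$; the lower bound gives $I(\alpha) = \infty$ when $\alpha \geq \nu$. Combined with the substitution and sandwich argument above, this yields the claim. The argument contains no real obstacle; the only point one has to take some care with is the uniform equivalence $\norm{\Sigma^{1/2} \vect{y}} \asymp \norm{\vect{y}}$, so that the power $\norm{\Sigma^{1/2} \vect{y}}^{\alpha}$ in the transformed integral can be replaced by $\norm{\vect{y}}^{\alpha}$ without affecting convergence.
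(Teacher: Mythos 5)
Your proof is correct and follows essentially the same route as the paper's: the substitution $\vect{y}=\Sigma^{-1/2}(\vect{x}-\vect{\mu})$, the two-sided norm equivalence $\norm{\Sigma^{1/2}\vect{y}}\asymp\norm{\vect{y}}$, reduction to a radial integral over $(0,\infty)$, and a two-sided comparison of the tail with $r^{\alpha-\nu-1}$ (the paper splits at $r=\sqrt{\nu}$ rather than $r=1$, which is immaterial). No gaps.
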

\begin{proof}
Notice that there exists a unitary matrix $U$ and a diagonal matrix $D$ such that $\Sigma=U^{T}DU$ and let $\Sigma^{-1/2}=U^{T}D^{-1/2}U$. By a change of variable ($\vect{z} = \Sigma^{-1/2} (\vect{x} - \vect{\mu})$) we see that
\begin{equation*}
  \int \norm{\vect{x} - \vect{\mu}}^{\alpha} f(\vect{x}) \, d\vect{x} = \frac{\Gamma((\nu+m)/2)}{\Gamma(\nu/2) \nu^{m/2} \pi^{m/2}} \int \norm{\Sigma^{1/2}\vect{z}}^{\alpha} \left(1+\norm{\vect{z}}^2/\nu \right)^{-(\nu+m)/2} \, d\vect{z}.
\end{equation*}
Notice that $\norm{\cdot}_{\Sigma^{1/2}}$, given by $\norm{\vect{z}}_{\Sigma^{1/2}}=\norm{\Sigma^{1/2} \vect{z}}$, is a norm on $\mathbb{R}^m$. In particular, there exist constants $0<C_1,C_2<\infty$ such that $C_1 \norm{\vect{z}} \leq \norm{\Sigma^{1/2} \vect{z}} \leq C_2 \norm{\vect{z}}$ for all $\vect{z} \in \mathbb{R}^m$. Therefore, it is enough to check the finiteness of
\begin{equation*}
  \int \norm{\vect{z}}^{\alpha} \left(1+\norm{\vect{z}}^2/\nu \right)^{-(\nu+m)/2} \, d\vect{z}
\end{equation*}  
Using the the coarea formula, we see that the above integral is equal to
\begin{equation*}
  \int_{0}^{\infty} r^{\alpha} (1+r^2/\nu)^{-(\nu+m)/2} H^{m-1}(\partial B_r(\vect{0})) \, dr,
\end{equation*}
where $H^{m-1}$ is the $m-1$ dimensional Hausdorff measure and $\partial B_r(\vect{0})$ is the hypersphere with center $\vect{0}$ and radius $r$. Using that the surface area of $\partial B_r(\vect{0})$ is $2 \pi^{m/2} r^{m-1}/\Gamma(m/2)$, the above integral reduces to 
\begin{equation*}
\frac{2 \pi^{m/2}}{\Gamma(m/2)} \int_{0}^{\infty} r^{\alpha+m-1} (1+r^2/\nu)^{-(\nu+m)/2} \, dr.
\end{equation*}
Now, since for $r \geq \sqrt{\nu}$
\begin{equation*}
  \left(\frac{\nu}{2} \right)^{(\nu+m)/2} r^{-(\nu+m)} \le (1+r^2/\nu)^{-(\nu+m)/2} \leq (r^2/\nu)^{-(\nu+m)/2},
\end{equation*}  
we have
\begin{align*}
 \left( \frac{\nu}{2} \right)^{(\nu+m)/2}  \int_{\sqrt{\nu}}^{\infty} r^{-(\nu-\alpha+1)} \, dr &\leq \int_{0}^{\infty} r^{\alpha+m-1} (1+r^2/\nu)^{-(\nu+m)/2} \, dr \\
  &\leq \int_{0}^{\sqrt{\nu}} \nu^{(\alpha+m-1)/2} \, dr + \nu^{(\nu+m)/2} \int_{\sqrt{\nu}}^{\infty} r^{-(\nu-\alpha+1)} \, dr.
\end{align*}
The result follows using that $\int_{\sqrt{\nu}}^{\infty} r^{-(\nu-\alpha+1)} \, dr$ is finite if and only if $\alpha<\nu$.
\end{proof}

\subsection{Continuity of the dispersion measure $\sigma(\cdot)$}
\label{sm:subsection_continuity_of_dispersion_measure}

\citet{nieto2016} and \citet{gijbels2017} suggest further properties that a statistical data depth should satisfy.  Here, we recall the definition of continuity and uniform continuity in $P$. Clearly, uniform continuity at $P$ implies continuity at $P$. We begin by recalling the definition of weak convergence of probability distributions.

\begin{definition}[Weak convergence of probability distributions]
  A sequence of probability distributions $(P_n)_{n=1}^\infty$ is said to converge weakly to $P$ (in symbols, $P_n \stackrel{w}{\rightarrow} P$), if, for all bounded and continuous functions $f:\mathbb{R}^{m} \to \mathbb{R}$,
\begin{equation*}
  \lim_{n \to \infty} \int_{\mathbb{R}^{m}} f(\vect{x}) dP_{n}(x) = \int_{\mathbb{R}^{m}} f(\vect{x}) dP(x).
\end{equation*}
\end{definition}
In the above definition, we can equivalently take $f(\cdot)$ to be Lipschitz continuous (see Theorem 11.3.3 in \citet{Dudley-2018}).

\begin{definition}[Continuity at $P$ of the depth]
Let $(P_n)_{n=1}^\infty$ be a sequence of probability distributions that converges weakly to a probability distribution $P$. A statistical data depth $d(\cdot , \cdot)$ is continuous at $P$ if
\begin{equation*}
d(\vect{x},P_n) \rightarrow d(\vect{x},P)  \quad \text{as} \ n \rightarrow \infty
\end{equation*}
for all $\vect{x} \in \mathbb{R}^m$.
\end{definition}

\begin{definition}[Uniform continuity at $P$ of the depth]
Let $(P_n)_{n=1}^\infty$ be a sequence of probability distributions that converges weakly to a probability distribution $P$. A statistical data depth $d(\cdot , \cdot)$ is uniformly continuous at $P$ if
\begin{equation*}
\sup_{\vect{x} \in \mathbb{R}^m} \vert d(\vect{x},P_n) - d(\vect{x},P) \vert \rightarrow 0 \qquad \text{as} \ n \rightarrow \infty.
\end{equation*}
\end{definition}

\begin{proposition}[Continuity at $P$ of the dispersion measure] \label{sm:PropContSigmatP}
Let $(P_n)_{n=1}^\infty$ be a sequence of probability distributions that converges weakly to a probability distribution $P$ and suppose that $d(\cdot , \cdot)$ is continuous at $P$. If there is an integrable function $g(\cdot)$ such that $d(\vect{x},P_n) \leq g(\vect{x})$ for all $n \in \mathbb{N}$, then 
\begin{equation*}
\sigma(P_n)  \rightarrow \sigma(P) \qquad \text{as} \ n \rightarrow \infty.
\end{equation*}
\end{proposition}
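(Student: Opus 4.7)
The plan is to recognize the statement as a direct application of the Lebesgue dominated convergence theorem to the sequence of integrands $\vect{x} \mapsto d(\vect{x}, P_n)$ on $\mathbb{R}^m$ equipped with Lebesgue measure. Recall that, by Definition \ref{def:dispersion}, $\sigma(P_n) = \int_{\mathbb{R}^m} d(\vect{x}, P_n)\, d\vect{x}$ and likewise for $\sigma(P)$, so the conclusion is exactly the statement that limit and integral may be exchanged.

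First I would invoke the hypothesis that the statistical data depth $d(\cdot,\cdot)$ is continuous at $P$ to obtain pointwise convergence $d(\vect{x}, P_n) \to d(\vect{x}, P)$ for every $\vect{x} \in \mathbb{R}^m$. Second, using that depths are nonnegative and that $d(\vect{x}, P_n) \le g(\vect{x})$ for all $n$, pass to the limit in this inequality to obtain $0 \le d(\vect{x}, P) \le g(\vect{x})$; combined with the integrability of $g$, this certifies both that the limiting integrand is Lebesgue integrable and that the sequence is dominated by an integrable envelope on all of $\mathbb{R}^m$.

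With these two ingredients the dominated convergence theorem applies and yields
\begin{equation*}
\sigma(P_n) = \int_{\mathbb{R}^m} d(\vect{x}, P_n)\, d\vect{x} \longrightarrow \int_{\mathbb{R}^m} d(\vect{x}, P)\, d\vect{x} = \sigma(P),
\end{equation*}
as desired. There is no real obstacle here: the content of the proposition is already encoded in the hypotheses, which are tailored to fit exactly the standard DCT setup. The only minor point worth flagging is that the uniform bound is only assumed along the sequence $(P_n)$, but the pointwise convergence and nonnegativity of $d(\cdot,\cdot)$ transfer it to the limit automatically, so no additional hypothesis on $P$ is required.
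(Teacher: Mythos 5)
Your proposal is correct and matches the paper's own proof exactly: both deduce pointwise convergence $d(\vect{x},P_n) \to d(\vect{x},P)$ from continuity of the depth at $P$, use the integrable dominating function $g$, and conclude by Lebesgue's dominated convergence theorem. The extra remark that the bound transfers to the limiting integrand is a harmless elaboration of the same argument.
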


\begin{proof}
Since $d(\vect{x},P_n) \rightarrow d(\vect{x}, P)$ and $d(\vect{x},P_n) \leq g(\vect{x})$ for every given $\vect{x}$, the result follows from Lebesgue's dominated convergence theorem.
\end{proof} \\

Uniform continuity at $P$ is shown, for the halfspace depth, in Theorem A.3 in \citet{nagy2016} and, for the simplicial depth, in Corollary 2 in \citet{dumbgen1992}, under the hypothesis that $P$ assigns probability $0$ to all hyperplanes in $\mathbb{R}^m$, that is,
\begin{equation} \label{sm:Prob0Hyperplanes}
P(L) = 0 \text{ for all hyperplanes } L \subset \mathbb{R}^m.
\end{equation}
Notice that \eqref{sm:Prob0Hyperplanes} is satisfied if $P$ is absolutely continuous. Halfspace and simplicial depths are given in terms of probabilities of halfspaces and simplices. If \eqref{sm:Prob0Hyperplanes} is not satisfied their boundaries may have positive probability, and, thus, continuity at $P$ may fail (see Portmanteau theorem \citep[][Theorem 2.1]{billingsley1999}). An immediate consequence of the uniform continuity at $P$ of halfspace and simplicial depths and Proposition \ref{sm:PropContSigmatP} is the following corollary.

\begin{corollary} \label{sm:CorContSigminPHalfSimp}
Let $(P_n)_{n=1}^\infty$ be a sequence of probability distributions that converges weakly to a probability distribution $P$. If there is an integrable function $g(\cdot)$ such that $d_H(\vect{x},P_n) \leq  g(\vect{x})$ (resp.\ $d_\Delta(\vect{x},P_n) \leq g(\vect{x})$), for all $n \in \mathbb{N}$, and $P$ satisfies \eqref{sm:Prob0Hyperplanes}, then $\sigma_H(P_n) \to \sigma_H(P)$ (resp.\ $\sigma_\Delta(P_n) \to \sigma_\Delta(P)$) as $n \to \infty$.
\end{corollary}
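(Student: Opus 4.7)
The plan is to derive this as a direct consequence of Proposition~\ref{sm:PropContSigmatP} once pointwise (in fact, uniform) continuity of the relevant depth at $P$ is established. The work reduces to verifying two ingredients: (a) that $d_H(\cdot,P_n)\to d_H(\cdot,P)$ pointwise (respectively for $d_\Delta$), and (b) that the pointwise domination $d_H(\vect{x},P_n)\le g(\vect{x})$ by an integrable $g(\cdot)$ is preserved in the limit. Once these are in place, the dominated convergence theorem packaged in Proposition~\ref{sm:PropContSigmatP} delivers $\sigma_H(P_n)\to\sigma_H(P)$ (respectively $\sigma_\Delta(P_n)\to\sigma_\Delta(P)$).

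For (a), I would appeal directly to the cited uniform-continuity results: Theorem~A.3 in Nagy--Gijbels--Omelka--Hlubinka (2016) for the halfspace depth and Corollary~2 in D\"umbgen (1992) for the simplicial depth. Both theorems state that if $P_n\stackrel{w}{\to}P$ and $P$ assigns zero mass to every hyperplane, i.e.\ condition \eqref{sm:Prob0Hyperplanes} holds, then
\begin{equation*}
\sup_{\vect{x}\in\mathbb{R}^m}\bigl\lvert d_H(\vect{x},P_n)-d_H(\vect{x},P)\bigr\rvert\longrightarrow 0
\end{equation*}
as $n\to\infty$, and similarly for $d_\Delta$. Uniform convergence in $\vect{x}$ implies the pointwise convergence $d_H(\vect{x},P_n)\to d_H(\vect{x},P)$ (and $d_\Delta(\vect{x},P_n)\to d_\Delta(\vect{x},P)$) required by the definition of continuity at $P$.

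Given (a), I then invoke Proposition~\ref{sm:PropContSigmatP} directly, using the hypothesized integrable envelope $g(\cdot)$. Proposition~\ref{sm:PropContSigmatP} is exactly the Lebesgue dominated convergence statement $\int d(\vect{x},P_n)\,d\vect{x}\to\int d(\vect{x},P)\,d\vect{x}$ under pointwise convergence plus an integrable dominating function, so its two hypotheses are met: pointwise continuity at $P$ from step (a) and the bound $d_H(\vect{x},P_n)\le g(\vect{x})$ (resp.\ $d_\Delta(\vect{x},P_n)\le g(\vect{x})$) by assumption. This yields $\sigma_H(P_n)\to\sigma_H(P)$ and $\sigma_\Delta(P_n)\to\sigma_\Delta(P)$.

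There is essentially no hard step here, since the corollary is stated precisely as the combination of Proposition~\ref{sm:PropContSigmatP} with the external uniform-continuity theorems; the only subtle point worth noting is that the hyperplane-null condition \eqref{sm:Prob0Hyperplanes} is what rules out discontinuities of $d_H(\cdot,P)$ and $d_\Delta(\cdot,P)$ (via the Portmanteau theorem applied to boundaries of halfspaces and simplices), and that the dominating envelope $g$ need only bound $d(\vect{x},P_n)$ for the sequence—no separate bound on $d(\vect{x},P)$ is required, since the limit is automatically dominated by $g$ by the pointwise convergence of step (a).
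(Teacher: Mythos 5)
Your proposal is correct and follows exactly the paper's route: the paper presents this corollary as an ``immediate consequence'' of the uniform continuity at $P$ of the halfspace and simplicial depths (Theorem A.3 in \citet{nagy2016} and Corollary 2 in \citet{dumbgen1992}, valid under condition \eqref{sm:Prob0Hyperplanes}) combined with Proposition \ref{sm:PropContSigmatP}, which is precisely your two-step argument. Your closing remark that $g$ need only dominate the sequence $d(\cdot,P_n)$ is also consistent with how Proposition \ref{sm:PropContSigmatP} is stated and used in the paper.
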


As the following corollary shows, the existence of $g(\cdot)$ is ensured if $(P_n)_{n=1}^{\infty}$ decays polynomially.

\begin{corollary} \label{sm:CorPolDecay}
Let $(P_n)_{n=1}^\infty$ be a sequence of probability distributions that decays polynomially and converges weakly to a probability distribution $P$. If $P$ satisfies \eqref{sm:Prob0Hyperplanes}, then $\sigma_H(P_n) \to \sigma_H(P)$ and $\sigma_\Delta(P_n) \to \sigma_\Delta(P)$ as $n \to \infty$.
\end{corollary}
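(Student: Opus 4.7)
The plan is to apply Corollary \ref{sm:CorContSigminPHalfSimp}. The hypothesis \eqref{sm:Prob0Hyperplanes} on the limit $P$ is assumed, so the only remaining task is to exhibit an integrable function that dominates $d_H(\cdot , P_n)$ and $d_\Delta(\cdot , P_n)$ uniformly in $n$.

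I would reuse the pointwise estimates already established in the proof of Proposition \ref{sm:PropDispFinHalfSimp}. Specializing the halfspace defining direction to $\vect{u} = -\vect{x}/\norm{\vect{x}}$ yields $d_H(\vect{x}, P_n) \le \bP(\norm{\vect{X}} \ge \norm{\vect{x}})$ for $\vect{X} \sim P_n$, and the elementary observation that $\vect{x} \in \Delta[\vect{x}_1,\ldots,\vect{x}_{m+1}]$ forces $\norm{\vect{x}} \le \max_i \norm{\vect{x}_i}$, combined with a union bound, gives $d_\Delta(\vect{x}, P_n) \le (m+1)\bP(\norm{\vect{X}} \ge \norm{\vect{x}})$. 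The polynomial decay hypothesis in Definition \ref{sm:DefPolDecay} then furnishes constants $C > 0$ and $\alpha > 1$ that are \emph{independent of $n$} and satisfy $\bP(\norm{\vect{X}} \ge \norm{\vect{x}}) \le C/(1+\norm{\vect{x}})^{\alpha}$. Setting $g(\vect{x}) = (m+1) C/(1+\norm{\vect{x}})^{\alpha}$ therefore dominates both sequences of depths, and the integrability of $g$ over $\mathbb{R}^m$ has already been recorded in Corollary \ref{sm:CorDispFinHalfSimp}.

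With this $g$ in hand, Corollary \ref{sm:CorContSigminPHalfSimp} applies separately to the halfspace and simplicial cases and yields $\sigma_H(P_n) \to \sigma_H(P)$ and $\sigma_\Delta(P_n) \to \sigma_\Delta(P)$. The only point requiring real care is the uniformity in $n$: it is essential that the constants $C$ and $\alpha$ in the definition of a polynomially decaying \emph{family} do not depend on the index, because otherwise no single dominating function would exist and Lebesgue's dominated convergence theorem (invoked inside Proposition \ref{sm:PropContSigmatP}) could not be applied. Once this uniform tail control is in place, the rest is bookkeeping that combines the pointwise depth bounds above with the uniform continuity at $P$ of the halfspace and simplicial depths.
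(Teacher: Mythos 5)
Your proposal is correct and follows essentially the same route as the paper: both reuse the pointwise bounds \eqref{bound_halfspace_depth}--\eqref{bound_simplicial_depth} with $P$ replaced by $P_n$, invoke the uniform-in-$n$ constants $C$ and $\alpha$ from Definition \ref{sm:DefPolDecay} to build the single dominating function $g(\vect{x}) = C/(1+\norm{\vect{x}})^{\alpha}$ (up to the factor $m+1$), and conclude via Corollary \ref{sm:CorContSigminPHalfSimp}. Your explicit remark that the uniformity of $C$ and $\alpha$ across the family is the crux is accurate and matches the paper's implicit use of that fact.
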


\begin{proof}
By \eqref{bound_halfspace_depth} and \eqref{bound_simplicial_depth} with $P$ replaced by $P_{n}$ and $\vect{X}$ replaced by $\vect{X}_{n}$, we obtain that $d_H(\vect{x},P_n) \leq \bP(\norm{\vect{X}_n} \geq \norm{\vect{x}})$ and $d_\Delta(\vect{x},P_n) \leq (m+1) \, \bP(\norm{\vect{X}_n} \geq \norm{\vect{x}})$. Moreover, since $(P_n)_{n=1}^{\infty}$ decays polynomially, there exist $C>0$ and $\alpha>1$ such that
\begin{equation} \label{dominating_function_empirical_halfspace_depth}
 \bP(\norm{\vect{X}_n} \geq \norm{\vect{x}}) \leq g(\vect{x}) = \frac{C}{(1+\norm{\vect{x}})^{\alpha}}.
\end{equation}
The result now follows from Corollary \ref{sm:CorContSigminPHalfSimp}.
\end{proof}

\subsection{Dispersion measure $\sigma(\cdot)$ in a subspace}
\label{sm:subsection_dispersion_measure_in_subspace}

For $1 \leq q \leq m$ let $\mathcal{B}^{q \times m}$ be the set of $q \times m$ matrices $B_q$ with orthonormal row vectors i.e.\ $B_q B_q^{\top} = I_{q}$, where $I_{q}$ is the $q \times q$ identity matrix. $\mathcal{B}^{q \times m}$ is endowed with the matrix norm $\norm{B_q}_{\mathcal{B}^{q \times m}}=\sup_{\vect{z} \in \mathbb{R}^m \setminus \{ \vect{0} \} } \frac{\norm{B_q \vect{z}}}{\norm{\vect{z}}}$. The next result is concerned with preservation of weak convergence under mappings in $\mathcal{B}^{q \times m}$. Results for general mappings are given by \citet{billingsley1967,topsoe1967}. For a probability distribution $P$ on $\mathbb{R}^m$ and $B_{q} \in \mathcal{B}^{q \times m}$, we denote by $P_{B_q}$ the probability distribution on $\mathbb{R}^q$ given by $P_{B_q}(A)=P(B_q^{-1}A)$ for all Borel sets $A \subset \mathbb{R}^q$.

\begin{proposition} \label{sm:PropConvPBq}
  Suppose that $\vect{X}_{n} \sim P_{n}$ satisfies $\limsup_{n \to \infty} \E[\norm{\vect{X}_{n}}] < \infty$. If $P_{n} \stackrel{w}{\rightarrow} P$ and $B_{q}^k \rightarrow B_q$, then $(P_{n})_{B_{q}^{k}} \stackrel{w}{\rightarrow} P_{B_q}$ as $n,k \to \infty$.
\end{proposition}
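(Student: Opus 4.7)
My plan is to verify weak convergence through the Lipschitz characterization already noted in the excerpt: it suffices to show that $\int f \, d(P_n)_{B_q^k} \to \int f \, dP_{B_q}$ for every bounded Lipschitz function $f: \mathbb{R}^q \to \mathbb{R}$. By the pushforward change of variables, this is equivalent to showing
\begin{equation*}
\int f(B_q^k \vect{x}) \, dP_n(\vect{x}) \longrightarrow \int f(B_q \vect{x}) \, dP(\vect{x})
\end{equation*}
as $n, k \to \infty$.

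The main step is a triangle inequality splitting the left-hand side into two pieces, one in which we only vary $n$ and one in which we only vary $k$:
\begin{equation*}
\left\lvert \int f(B_q^k \vect{x}) \, dP_n(\vect{x}) - \int f(B_q \vect{x}) \, dP(\vect{x}) \right\rvert \le A_{n,k} + B_n,
\end{equation*}
where $A_{n,k} = \int \lvert f(B_q^k \vect{x}) - f(B_q \vect{x}) \rvert \, dP_n(\vect{x})$ and $B_n = \lvert \int f(B_q \vect{x}) \, dP_n(\vect{x}) - \int f(B_q \vect{x}) \, dP(\vect{x}) \rvert$.

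For $B_n$, the map $\vect{x} \mapsto f(B_q \vect{x})$ is bounded and continuous (in fact Lipschitz, since $B_q$ acts linearly with operator norm $1$), so $B_n \to 0$ by the weak convergence $P_n \stackrel{w}{\to} P$. For $A_{n,k}$, let $L$ be a Lipschitz constant for $f$. Then $\lvert f(B_q^k \vect{x}) - f(B_q \vect{x}) \rvert \le L \, \lVert (B_q^k - B_q) \vect{x} \rVert \le L \, \lVert B_q^k - B_q \rVert_{\mathcal{B}^{q \times m}} \lVert \vect{x} \rVert$, so that
\begin{equation*}
A_{n,k} \le L \, \lVert B_q^k - B_q \rVert_{\mathcal{B}^{q \times m}} \, \E[\lVert \vect{X}_n \rVert].
\end{equation*}
The hypothesis $\limsup_{n \to \infty} \E[\lVert \vect{X}_n \rVert] < \infty$ gives a uniform bound $M$ on $\E[\lVert \vect{X}_n \rVert]$ for $n$ large, so $A_{n,k} \to 0$ as $k \to \infty$, uniformly in $n$ (for $n$ large).

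The delicate point — the only one requiring real care — is the joint nature of the limit: we want $A_{n,k} + B_n \to 0$ as $n, k \to \infty$ simultaneously, not iteratively. This is resolved by the uniformity just observed: given $\varepsilon > 0$, first pick $N$ so that $B_n < \varepsilon/2$ and $\E[\lVert \vect{X}_n \rVert] \le M$ for $n \ge N$, then pick $K$ so that $L \, \lVert B_q^k - B_q \rVert_{\mathcal{B}^{q \times m}} M < \varepsilon/2$ for $k \ge K$; for all $(n, k)$ with $n \ge N$ and $k \ge K$, the full bound is below $\varepsilon$. Invoking the Lipschitz characterization of weak convergence (Theorem 11.3.3 in \citet{Dudley-2018}, cited in the excerpt) then yields $(P_n)_{B_q^k} \stackrel{w}{\to} P_{B_q}$.
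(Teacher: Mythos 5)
Your proposal is correct and follows essentially the same argument as the paper: the same reduction to bounded Lipschitz test functions, the same triangle-inequality split into a term controlled by $L\,\lVert B_q^k - B_q\rVert_{\mathcal{B}^{q\times m}}\,\E[\lVert\vect{X}_n\rVert]$ and a term handled by $P_n \stackrel{w}{\rightarrow} P$, with the same care about the joint limit via the uniform bound from $\limsup_{n\to\infty}\E[\lVert\vect{X}_n\rVert]<\infty$. No gaps.
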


\begin{proof}
  Let $f : \mathbb{R}^q \rightarrow \mathbb{R}$ be bounded and Lipschitz continuous with Lipschitz constant $0<L<\infty$. For $\vect{x} \in \mathbb{R}^m$ it holds that
\begin{equation} \label{f_lipschitz}
  \left\vert f(B_{q}^{k}\vect{x})-f(B_{q} \vect{x}) \right\vert \leq L \norm{B_{q}^{k} \vect{x} - B_{q} \vect{x} } \, \leq \, L \norm{B_{q}^{k}-B_q}_{\mathcal{B}^{q \times m}} \norm{\vect{x}}.
\end{equation}
Using that $P_{n} \stackrel{w}{\rightarrow} P$ and $C_{0} \coloneqq \limsup_{n \to \infty} \E[\norm{\vect{X}_{n}}]+1<\infty$; for $\epsilon>0$, let $n^{*} \in \mathbb{N}$ such that, for all $n \geq n^{*}$,
\begin{equation*}
  \left\vert \int_{\mathbb{R}^m} f(B_{q}\vect{x}) \, d(P_n(\vect{x}) - P(\vect{x})) \right\vert \leq \epsilon/2
\end{equation*}
and
\begin{equation*}
  \int_{\mathbb{R}^{m}} \norm{\vect{x}} \, dP_{n}(\vect{x}) = \E[\norm{\vect{X}_{n}}] \leq C_{0}.
\end{equation*}
Next, let $k^{*} \in \mathbb{N}$ such that $\norm{B_{q}^{k}-B_q}_{\mathcal{B}^{q \times m}} \leq (2 L C_{0})^{-1} \epsilon$ for all $k \geq k^{*}$. Using the triangle inequality and \eqref{f_lipschitz}, we conclude that
\begin{align*}
  &\left\vert \int_{\mathbb{R}^q} f(\vect{y}) \, d(P_{n})_{B_{q}^{k}}(\vect{y}) - \int_{\mathbb{R}^q} f(\vect{y}) \, dP_{B_{q}}(\vect{y}) \right\vert \\
= &\left\vert \int_{\mathbb{R}^m} f(B_{q}^{k}\vect{x}) \, dP_n(\vect{x}) - \int_{\mathbb{R}^m} f(B_{q} \vect{x}) \, dP(\vect{x}) \right\vert \\
\leq & \int_{\mathbb{R}^m}  \left\vert f(B_{q}^{k}\vect{x})-f(B_{q} \vect{x}) \right\vert \, dP_{n}(\vect{x}) + \left\vert \int_{\mathbb{R}^m} f(B_{q}\vect{x}) \, d(P_n(\vect{x}) - P(\vect{x})) \right\vert \leq \epsilon
\end{align*}
for all $k \geq k^{*}$ and $n \geq n^{*}$.
\end{proof}

\begin{corollary} \label{sm:CorContSigminPBq}
Suppose that $\vect{X}_{n} \sim P_{n}$ satisfies $\limsup_{n \to \infty} \E[\norm{\vect{X}_{n}}] < \infty$, $P_{n} \stackrel{w}{\rightarrow} P$, and $B_{q}^k \rightarrow B_q$. If $d(\cdot , \cdot)$ is continuous at $P_{B_q}$ and there is an integrable function $g(\cdot)$ such that $d(\vect{y},(P_{n})_{B_{q}^{k}}) \leq g(\vect{y})$ for all $\vect{y} \in \mathbb{R}^q$ and $n,k \in \mathbb{N}$, then $\sigma((P_{n})_{B_{q}^{k}})  \rightarrow \sigma(P_{B_q})$ as $n,k \to \infty$.
\end{corollary}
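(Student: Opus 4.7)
The plan is to assemble two ingredients already available in the excerpt: the weak-convergence transfer result of Proposition \ref{sm:PropConvPBq} and the dominated-convergence argument used in Proposition \ref{sm:PropContSigmatP}. The only subtlety is that the limit is indexed jointly by $n$ and $k$, so I would handle it by reducing to an arbitrary diagonal subsequence.

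First, I would pick arbitrary sequences $n_j, k_j \to \infty$ and aim to show $\sigma((P_{n_j})_{B_{q}^{k_j}}) \to \sigma(P_{B_q})$. Since the sequences are arbitrary, this yields the joint convergence stated in the corollary. From the hypotheses $\limsup_{n \to \infty} \E[\norm{\vect{X}_n}] < \infty$, $P_n \stackrel{w}{\rightarrow} P$, and $B_{q}^{k} \to B_q$, Proposition \ref{sm:PropConvPBq} (applied along the chosen sequences) yields $(P_{n_j})_{B_{q}^{k_j}} \stackrel{w}{\rightarrow} P_{B_q}$.

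Next, I would exploit continuity of $d(\cdot,\cdot)$ at $P_{B_q}$: for every fixed $\vect{y} \in \mathbb{R}^q$,
\begin{equation*}
d(\vect{y}, (P_{n_j})_{B_{q}^{k_j}}) \longrightarrow d(\vect{y}, P_{B_q}) \quad \text{as } j \to \infty.
\end{equation*}
Combined with the uniform pointwise bound $d(\vect{y}, (P_n)_{B_{q}^{k}}) \leq g(\vect{y})$ for an integrable $g$, Lebesgue's dominated convergence theorem gives
\begin{equation*}
\sigma((P_{n_j})_{B_{q}^{k_j}}) = \int_{\mathbb{R}^q} d(\vect{y}, (P_{n_j})_{B_{q}^{k_j}})\, d\vect{y} \longrightarrow \int_{\mathbb{R}^q} d(\vect{y}, P_{B_q})\, d\vect{y} = \sigma(P_{B_q}).
\end{equation*}

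The main obstacle is simply the careful handling of the double index: once the diagonal-sequence reduction is in place, the proof is a direct composition of Proposition \ref{sm:PropConvPBq} (which supplies the weak convergence of the projected measures) and the dominated convergence argument from Proposition \ref{sm:PropContSigmatP} (which converts pointwise continuity of the depth into convergence of its integral). No new estimates are required.
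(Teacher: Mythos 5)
Your proof is correct and follows the same route as the paper: apply Proposition \ref{sm:PropConvPBq} to obtain $(P_{n})_{B_{q}^{k}} \stackrel{w}{\rightarrow} P_{B_q}$, then invoke the continuity of the depth and dominated convergence exactly as in Proposition \ref{sm:PropContSigmatP}. The explicit reduction to arbitrary diagonal subsequences is a harmless elaboration of the double-index limit that the paper leaves implicit.
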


\begin{proof}
Using Proposition \ref{sm:PropConvPBq} we have that $(P_{n})_{B_{q}^{k}} \stackrel{w}{\rightarrow} P_{B_q}$ as $n,k \rightarrow \infty$. Now the result follows from Proposition \ref{sm:PropContSigmatP}.
\end{proof} \\

An immediate consequence of Corollary \ref{sm:CorContSigminPHalfSimp} and Corollary \ref{sm:CorContSigminPBq} is the following result for halfspace and simplicial depth.

\begin{corollary} \label{sm:CorgHalfSimp}
Suppose that $\vect{X}_{n} \sim P_{n}$ satisfies $\limsup_{n \to \infty} \E[\norm{\vect{X}_{n}}] < \infty$, $P_{n} \stackrel{w}{\rightarrow} P$, and $B_{q}^k \rightarrow B_q$. Let $g(\cdot)$ be an integrable function such that $d_H(\vect{y},(P_{n})_{B_{q}^{k}}) \leq g(\vect{y})$ (resp.\ $d_\Delta(\vect{y},(P_{n})_{B_{q}^{k}}) \leq g(\vect{y})$) for all $\vect{y} \in \mathbb{R}^q$ and $n,k \in \mathbb{N}$. If $P_{B_{q}}$ satisfies \eqref{sm:Prob0Hyperplanes} in $\mathbb{R}^q$, then, $\sigma_H((P_{n})_{B_{q}^{k}})  \rightarrow \sigma_H(P_{B_q})$ (resp.\ $\sigma_\Delta((P_{n})_{B_{q}^{k}})  \rightarrow \sigma_\Delta(P_{B_q})$) as $n,k \to \infty$.
\end{corollary}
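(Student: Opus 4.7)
The plan is to combine, in a direct way, the weak convergence result of Proposition \ref{sm:PropConvPBq} with the dominated convergence argument underlying Corollary \ref{sm:CorContSigminPBq}. First I would invoke Proposition \ref{sm:PropConvPBq}: its hypotheses $\limsup_{n\to\infty} \E[\norm{\vect{X}_n}] < \infty$, $P_n \stackrel{w}{\to} P$, and $B_q^k \to B_q$ are exactly what is assumed here, so we obtain $(P_n)_{B_q^k} \stackrel{w}{\to} P_{B_q}$ as $n,k \to \infty$. This transfers the problem to showing that the depth integrals converge along this pushforward sequence on $\mathbb{R}^q$.

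Next I would verify continuity at $P_{B_q}$ of the two depths involved. The hypothesis that $P_{B_q}$ satisfies \eqref{sm:Prob0Hyperplanes} in $\mathbb{R}^q$ is precisely what is needed to apply the classical uniform continuity results recalled just before Corollary \ref{sm:CorContSigminPHalfSimp} — Theorem A.3 in \citet{nagy2016} for $d_H$ and Corollary 2 in \citet{dumbgen1992} for $d_\Delta$ — which, a fortiori, yield pointwise continuity of $d_H(\cdot,\cdot)$ and $d_\Delta(\cdot,\cdot)$ at $P_{B_q}$.

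With continuity at $P_{B_q}$ in hand and the integrable envelope $g(\cdot)$ provided in the hypothesis, the conditions of Corollary \ref{sm:CorContSigminPBq} are met separately with $d = d_H$ and with $d = d_\Delta$. Applying that corollary yields $\sigma_H((P_n)_{B_q^k}) \to \sigma_H(P_{B_q})$ and $\sigma_\Delta((P_n)_{B_q^k}) \to \sigma_\Delta(P_{B_q})$ as $n,k \to \infty$, which is the claimed conclusion. There is no real obstacle: the statement is, as the surrounding text says, an immediate consequence, and the proof is essentially bookkeeping — chaining Proposition \ref{sm:PropConvPBq} with Proposition \ref{sm:PropContSigmatP} (the dominated convergence engine behind Corollary \ref{sm:CorContSigminPBq}), after invoking the known continuity-at-$P_{B_q}$ results for halfspace and simplicial depths.
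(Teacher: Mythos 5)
Your proposal is correct and follows exactly the route the paper intends: Proposition \ref{sm:PropConvPBq} gives $(P_n)_{B_q^k}\stackrel{w}{\to}P_{B_q}$, the hypothesis \eqref{sm:Prob0Hyperplanes} on $P_{B_q}$ yields continuity of $d_H$ and $d_\Delta$ at $P_{B_q}$ via the cited results of \citet{nagy2016} and \citet{dumbgen1992}, and Corollary \ref{sm:CorContSigminPBq} (i.e.\ dominated convergence with the envelope $g$) finishes the argument. The paper states the result as an immediate consequence of Corollaries \ref{sm:CorContSigminPHalfSimp} and \ref{sm:CorContSigminPBq}, which is precisely the bookkeeping you carried out.
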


As we see in the next result, the existence of $g(\cdot)$ in Corollary \ref{sm:CorgHalfSimp} is ensured if $(P_n)_{n=1}^{\infty}$ decays polynomially. By Remark \ref{remark_polynomial_decay}, a sufficient condition for this to hold is that $\limsup_{n \to \infty} \E[\norm{\vect{X}_{n}}^{\alpha}] < \infty$, for some $\alpha>1$, where $\vect{X}_{n} \sim P_{n}$. Clearly, this also implies that $\limsup_{n \to \infty} \E[\norm{\vect{X}_{n}}] < \infty$, thus, the assumption of Proposition \ref{sm:PropConvPBq} is satisfied. Furthermore, by Example 2.21 in \citet{vandervaar2000}, if $P_{n} \stackrel{w}{\rightarrow} P$ and $\limsup_{n \to \infty} \E[\norm{\vect{X}_{n}}^{\alpha}] < \infty$, then $\lim_{n \to \infty} \E[\norm{\vect{X}_{n}}]=\E[\norm{\vect{X}}]$, where $\vect{X} \sim P$.

\begin{corollary} \label{sm:CorPolDecayHalfSimp}
Suppose that $\vect{X}_{n} \sim P_{n}$ satisfies $\limsup_{n \to \infty} \E[\norm{\vect{X}_{n}}] < \infty$, $P_{n} \stackrel{w}{\rightarrow} P$, and $B_{q}^k \rightarrow B_q$. If $(P_n)_{n=1}^{\infty}$ decays polynomially and $P_{B_{q}}$ satisfies \eqref{sm:Prob0Hyperplanes} in $\mathbb{R}^q$, then $\sigma_H((P_{n})_{B_{q}^{k}})  \rightarrow \sigma_H(P_{B_q})$ and $\sigma_\Delta((P_{n})_{B_{q}^{k}})  \rightarrow \sigma_\Delta(P_{B_q})$ as $n,k \to \infty$.
\end{corollary}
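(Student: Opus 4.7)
The plan is to reduce everything to Corollary \ref{sm:CorgHalfSimp}, whose hypotheses on weak convergence, convergence of bases, the bounded first moments, and the condition \eqref{sm:Prob0Hyperplanes} on $P_{B_q}$ are already assumed here. The only remaining item is the existence of a single integrable function $g : \mathbb{R}^q \to \mathbb{R}_+$ that dominates both $d_H(\vect{y},(P_n)_{B_q^k})$ and $d_\Delta(\vect{y},(P_n)_{B_q^k})$ uniformly in $n$ and $k$.

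First, I would observe that for $B_q^k \in \mathcal{B}^{q \times m}$ the rows are orthonormal, so $B_q^k (B_q^k)^\top = I_q$ and the operator norm is $\norm{B_q^k}_{\mathcal{B}^{q \times m}} = 1$. Consequently $\norm{B_q^k \vect{x}} \leq \norm{\vect{x}}$ for every $\vect{x} \in \mathbb{R}^m$, and therefore, for $\vect{Y}_n^k := B_q^k \vect{X}_n \sim (P_n)_{B_q^k}$,
\begin{equation*}
\bP(\norm{\vect{Y}_n^k} \geq t) \, \leq \, \bP(\norm{\vect{X}_n} \geq t) \qquad \text{for every } t \geq 0.
\end{equation*}
Since $(P_n)_{n=1}^\infty$ decays polynomially, there exist constants $C>0$ and $\alpha>1$ (with $\alpha$ large enough for integrability in $\mathbb{R}^q$) such that $\bP(\norm{\vect{X}_n} \geq t) \leq C(1+t)^{-\alpha}$ uniformly in $n$.

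Next, I would apply the bounds \eqref{bound_halfspace_depth} and \eqref{bound_simplicial_depth} to the distribution $(P_n)_{B_q^k}$ on $\mathbb{R}^q$ (with the sample size in the simplicial bound now being $q+1$): for all $\vect{y} \in \mathbb{R}^q$,
\begin{equation*}
d_H(\vect{y},(P_n)_{B_q^k}) \, \leq \, \bP(\norm{\vect{Y}_n^k} \geq \norm{\vect{y}}) \, \leq \, \frac{C}{(1+\norm{\vect{y}})^{\alpha}},
\end{equation*}
\begin{equation*}
d_\Delta(\vect{y},(P_n)_{B_q^k}) \, \leq \, (q+1)\,\bP(\norm{\vect{Y}_n^k} \geq \norm{\vect{y}}) \, \leq \, \frac{(q+1)C}{(1+\norm{\vect{y}})^{\alpha}}.
\end{equation*}
Setting $g(\vect{y}) = (q+1)C(1+\norm{\vect{y}})^{-\alpha}$ gives a single integrable majorant on $\mathbb{R}^q$ that is uniform in $n$ and $k$.

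With this $g(\cdot)$ in hand, Corollary \ref{sm:CorgHalfSimp} applies to both depths and yields $\sigma_H((P_n)_{B_q^k}) \to \sigma_H(P_{B_q})$ and $\sigma_\Delta((P_n)_{B_q^k}) \to \sigma_\Delta(P_{B_q})$ as $n,k \to \infty$. The only subtle point is the tail-control step: one needs the polynomial decay to be preserved under the projections $B_q^k$ \emph{uniformly} in $k$, which is exactly where the orthonormality of the rows of $B_q^k$ (and hence the operator-norm bound $\leq 1$) is essential; without this uniform contraction, varying $k$ could in principle enlarge tails and destroy the common dominating function.
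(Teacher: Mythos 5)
Your proof is correct and follows essentially the same route as the paper's: both arguments rest on the contraction $\norm{B_q^k \vect{x}} \leq \norm{\vect{x}}$ to show that the projected family $((P_n)_{B_q^k})_{n,k}$ decays polynomially uniformly in $n$ and $k$, then feed the resulting majorant into the earlier dominated-convergence corollaries. The only cosmetic difference is that the paper cites Proposition \ref{sm:PropConvPBq} together with Corollary \ref{sm:CorPolDecay}, while you construct the dominating function explicitly and invoke Corollary \ref{sm:CorgHalfSimp}; these are the same chain of lemmas packaged differently.
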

\begin{proof}
  By Proposition \ref{sm:PropConvPBq}, $(P_{n})_{B_{q}^{k}} \stackrel{w}{\rightarrow} P_{B_q}$ as $n,k \rightarrow \infty$. We show next that, if $(P_n)_{n=1}^{\infty}$ decays polynomially in $\mathbb{R}^m$, then $((P_{n})_{B_{q}^{k}})_{n,k=1}^{\infty}$ decays polynomially in $\mathbb{R}^q$.  Let $\vect{X}_n \sim P_n$. Since, for all $k \in \mathbb{N}$ and $\vect{x} \in \mathbb{R}^m$, $\norm{B_q^k \vect{x}} \leq \norm{\vect{x}}$, it follows that, for all $t \geq 0$ and $k,n \in \mathbb{N}$,
\begin{equation*}
\bP(\norm{B_q^k \vect{X}_n} \geq t) \leq \bP(\norm{\vect{X}_n} \geq t) \leq \frac{C}{(1+t)^{\alpha}},
\end{equation*}
where $C>0$ and $\alpha>1$. The result now follows from Corollary \ref{sm:CorPolDecay}.
\end{proof} \\

As the next result shows, $P_{B_{q}}$ satisfies \eqref{sm:Prob0Hyperplanes} in Corollaries \ref{sm:CorgHalfSimp} and \ref{sm:CorPolDecayHalfSimp} whenever $P$ is absolutely continuous.

\begin{proposition} \label{proposition_absolutely_continuity_of_projection}
If $P$ is absolutely continuous in $\mathbb{R}^{m}$, then $P_{B_{q}}$ is absolutely continuous in $\mathbb{R}^{q}$ for all $B_{q} \in \mathcal{B}^{q \times m}$.
\end{proposition}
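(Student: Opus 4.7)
The plan is to show absolute continuity of $P_{B_q}$ directly from the definition: take any Borel set $A \subset \mathbb{R}^q$ with $\lambda_q(A) = 0$ and prove $P_{B_q}(A) = P(B_q^{-1} A) = 0$. Since $P$ is absolutely continuous with respect to Lebesgue measure on $\mathbb{R}^m$, it suffices to show that $\lambda_m(B_q^{-1} A) = 0$.

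To compute $\lambda_m(B_q^{-1} A)$, I would complete $B_q$ to an orthogonal $m \times m$ matrix. Concretely, let $B_p$ be a $p \times m$ matrix whose orthonormal rows span the orthogonal complement of the row space of $B_q$, and set $B = (B_q^\top, B_p^\top)^\top$. Then $B$ is orthogonal, hence a Lebesgue-measure-preserving bijection of $\mathbb{R}^m$. Under this change of variables, the preimage $B_q^{-1} A$ maps onto $A \times \mathbb{R}^p \subset \mathbb{R}^q \times \mathbb{R}^p$, so $\lambda_m(B_q^{-1} A) = \lambda_m(A \times \mathbb{R}^p)$.

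The final step is to observe that $\lambda_m(A \times \mathbb{R}^p) = 0$. Writing $A \times \mathbb{R}^p$ as the countable union $\bigcup_{n \in \mathbb{N}} A \times [-n,n]^p$ and applying Fubini to each piece gives $\lambda_m(A \times [-n,n]^p) = \lambda_q(A) \cdot (2n)^p = 0$, and countable subadditivity yields $\lambda_m(A \times \mathbb{R}^p) = 0$. Combining, $\lambda_m(B_q^{-1} A) = 0$, absolute continuity of $P$ forces $P(B_q^{-1} A) = 0$, and hence $P_{B_q}(A) = 0$, proving absolute continuity of $P_{B_q}$ on $\mathbb{R}^q$.

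There is no real obstacle here; the proof is essentially bookkeeping. The only point requiring a little care is the orthogonal extension of $B_q$ to $B$, but this is always possible since the rows of $B_q$ are orthonormal and therefore extend to an orthonormal basis of $\mathbb{R}^m$. Note that this argument is basis-independent in the sense that any alternative orthonormal basis $UB_q$ (with $U$ orthogonal $q \times q$) gives the same pushforward up to an orthogonal relabeling of $\mathbb{R}^q$, which preserves Lebesgue null sets.
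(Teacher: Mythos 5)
Your proof is correct and rests on the same key maneuver as the paper's: completing $B_q$ to an orthogonal matrix $B$ so that the preimage $B_q^{-1}A$ becomes $A \times \mathbb{R}^p$ up to a measure-preserving change of variables, followed by Fubini. The only difference is cosmetic: the paper carries the change of variables through the integral of the density $f$ and thereby exhibits the marginal density $h_{B_q}$ of $P_{B_q}$ explicitly, whereas you verify the null-set characterization of absolute continuity without producing the density; both are complete arguments.
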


\begin{proof}[Proof of Proposition \ref{proposition_absolutely_continuity_of_projection}]
If $P$ has density $f(\cdot)$, then, for all Borel sets $A \subset \mathbb{R}^{q}$,
\begin{equation*}
      P_{B_{q}}(A) = P(B_{q}^{-1}A) = \int_{B_{q}^{-1}A} f(\vect{x}) \, d\vect{x} = \int_{A \times \mathbb{R}^{p}} g_{B_{q}}(\vect{y}) \, d\vect{y} = \int_{A} h_{B_{q}}(\vect{v}) \, d\vect{v},
\end{equation*}
where $B_{p}$ is a $p \times m$ matrix such that $U = \left(\begin{smallmatrix} B_{q} \\ B_{p} \end{smallmatrix}\right)^{-1}$ is orthogonal, $g_{B_{q}}(\cdot)=f(U(\cdot))$ and 
\begin{equation*}
   h_{B_{q}}(\vect{v}) = \int_{\mathbb{R}^{p}} g_{B_{q}}((\vect{v},\vect{w})) \, d \vect{w}.
\end{equation*}
It follows that $P_{B_{q}}$ is absolutely continuous in $\mathbb{R}^{q}$ with density $h_{B_{q}}(\cdot)$.
\end{proof}

\subsection{Asymptotic properties of the dispersion measure $\sigma(\cdot)$}
\label{sm:subsection_asymptotic_properties_of_dispersion_measure}

Let $(\vect{X}_i)_{i=1}^{\infty}$ be i.i.d.\ with distribution $P$ and for $n \in \mathbb{N}$ let $\hat{P}_n$ be the empirical measure based on $(\vect{X}_i)_{i=1}^{n}$.

\begin{proposition}[Continuity at $P$ w.r.t.\ empirical measure] \label{sm:PropContSigmatPemp}
Suppose that $d(\cdot , \cdot)$ is continuous at $P$ and let $g(\cdot)$ be an integrable (random) function such that $d(\vect{x},\hat{P}_n) \leq g(\vect{x})$ a.s.\ for all $n \in \mathbb{N}$ and $\vect{x} \in \mathbb{R}^m$. Then, $\sigma(\hat{P}_n)  \rightarrow \sigma(P)$ a.s.\ as $n \rightarrow \infty$.
\end{proposition}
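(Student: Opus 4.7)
The plan is to combine two ingredients: almost sure weak convergence of the empirical measure $\hat{P}_n$ to $P$, and the continuity result for the dispersion measure already proved in Proposition \ref{sm:PropContSigmatP}. By Varadarajan's theorem (a consequence of the strong law of large numbers applied to a countable convergence-determining class of bounded continuous functions), $\hat{P}_n \stackrel{w}{\rightarrow} P$ almost surely as $n \to \infty$. Fix an $\omega$ in the full-measure event on which this weak convergence holds and on which the domination $d(\vect{x},\hat{P}_n(\omega)) \leq g(\vect{x},\omega)$ is valid for every $n$ and every $\vect{x}$; by assumption $g(\cdot,\omega)$ is then an integrable deterministic function on $\mathbb{R}^m$.

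For this fixed $\omega$, the sequence $(\hat{P}_n(\omega))_{n=1}^\infty$ is a deterministic sequence of probability distributions converging weakly to $P$. The continuity of $d(\cdot,\cdot)$ at $P$ yields the pointwise convergence $d(\vect{x},\hat{P}_n(\omega)) \to d(\vect{x},P)$ for every $\vect{x} \in \mathbb{R}^m$. Together with the domination by the integrable function $g(\cdot,\omega)$, Proposition \ref{sm:PropContSigmatP} (which is essentially Lebesgue's dominated convergence theorem applied to the integral defining $\sigma(\cdot)$) delivers
\begin{equation*}
\sigma(\hat{P}_n(\omega)) = \int_{\mathbb{R}^m} d(\vect{x},\hat{P}_n(\omega)) \, d\vect{x} \longrightarrow \int_{\mathbb{R}^m} d(\vect{x},P) \, d\vect{x} = \sigma(P).
\end{equation*}
Since the full-measure event on which all of this holds is independent of the particular sequence we chose, we conclude $\sigma(\hat{P}_n) \to \sigma(P)$ almost surely.

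The only subtlety I foresee is that the dominating function $g$ is allowed to depend on $\omega$, so Proposition \ref{sm:PropContSigmatP} must be applied path-by-path rather than in a uniform fashion; the measurability of $\sigma(\hat{P}_n)$ as a random variable and the almost sure existence of the common good event should be recorded explicitly, but both are routine once one fixes the countable convergence-determining class used to obtain almost sure weak convergence. The continuity hypothesis on $d(\cdot,\cdot)$ at $P$ does the real work: it transfers weak convergence of measures to pointwise convergence of the depths, after which the dispersion convergence is just dominated convergence.
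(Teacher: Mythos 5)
Your argument is correct and follows essentially the same route as the paper: almost sure weak convergence of $\hat{P}_n$ to $P$ via Varadarajan's theorem, continuity of $d(\cdot,\cdot)$ at $P$ to obtain pointwise convergence of the depths, and dominated convergence with the (pathwise) integrable dominating function $g$. Your extra care in fixing $\omega$ and applying Proposition \ref{sm:PropContSigmatP} path-by-path is exactly the reading the paper intends, merely spelled out more explicitly.
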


\begin{proof}
\citet{varadarajan1958} shows that $\hat{P}_n \stackrel{w}{\rightarrow} P$ a.s. Therefore, $d(\cdot,\hat{P}_n)$ converges pointwise to $d(\cdot,P)$ a.s. Finally, Lebesgue's dominated convergence theorem implies that $\sigma(\hat{P}_n)  \rightarrow \sigma(P)$ a.s.
\end{proof}

\begin{remark}
The function $g(\cdot)$ in Proposition \ref{sm:PropContSigmatPemp} may depend on the sequence $(\hat{P}_n)_{n=1}^{\infty}$, but not on $n$: $\{ g(\vect{x}) \}_{\vect{x} \in \mathbb{R}^m}$ is a random field.
\end{remark}

The following corollary follows immediately from Proposition \ref{sm:PropContSigmatPemp}.

\begin{corollary} \label{sm:CorContSigmatPempHalfSimp}
Let $g(\cdot)$ be an integrable random field such that $d_H(\vect{x},\hat{P}_n) \leq g(\vect{x})$ (resp.\ $d_\Delta(\vect{x},\hat{P}_n) \leq g(\vect{x})$) a.s.\ for all $n \in \mathbb{N}$ and $\vect{x} \in \mathbb{R}^m$. If $P$ satisfies \eqref{sm:Prob0Hyperplanes}, then $\sigma_H(\hat{P}_n)  \rightarrow \sigma_H(P)$ (resp.\ $\sigma_\Delta(\hat{P}_n)  \rightarrow \sigma_\Delta(P)$) a.s.\ as $n \rightarrow \infty$.
\end{corollary}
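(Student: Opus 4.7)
The plan is to reduce this statement directly to Proposition \ref{sm:PropContSigmatPemp}, since that proposition is stated for a generic depth $d(\cdot,\cdot)$ that is continuous at $P$ and admits an integrable dominating function along the sequence $(\hat{P}_n)_{n=1}^\infty$. Here the dominating function is already supplied in the hypothesis (the random field $g(\cdot)$), so the only real task is to verify continuity at $P$ of each of $d_H$ and $d_\Delta$ under the assumption \eqref{sm:Prob0Hyperplanes}.

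The first step is to recall that, as noted in the paragraph preceding Corollary \ref{sm:CorContSigminPHalfSimp}, Theorem A.3 of \citet{nagy2016} and Corollary 2 of \citet{dumbgen1992} give uniform continuity of the halfspace and simplicial depths at any probability measure $P$ satisfying \eqref{sm:Prob0Hyperplanes}. Uniform continuity at $P$ trivially implies continuity at $P$, which is what Proposition \ref{sm:PropContSigmatPemp} requires. So both $d_H(\cdot,\cdot)$ and $d_\Delta(\cdot,\cdot)$ qualify as admissible depth functions in the sense of that proposition.

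The second step is to invoke Proposition \ref{sm:PropContSigmatPemp} twice, once with $d=d_H$ and once with $d=d_\Delta$, taking in each case the integrable random field $g(\cdot)$ provided by the hypothesis as the dominating function. This yields $\sigma_H(\hat{P}_n) \to \sigma_H(P)$ and $\sigma_\Delta(\hat{P}_n) \to \sigma_\Delta(P)$ a.s.\ as $n \to \infty$, which is the claim.

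There is no real obstacle here: the heavy lifting (Varadarajan's almost-sure weak convergence $\hat{P}_n \stackrel{w}{\rightarrow} P$, pointwise continuity of the depth at $P$, and the dominated convergence argument) has already been done inside Proposition \ref{sm:PropContSigmatPemp}, and the depth-specific continuity results are cited from the literature. The proof is therefore essentially a one-line application, and the corollary can be stated as an immediate consequence of Proposition \ref{sm:PropContSigmatPemp} together with the uniform continuity of $d_H$ and $d_\Delta$ at measures satisfying \eqref{sm:Prob0Hyperplanes}.
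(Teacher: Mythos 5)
Your proposal is correct and matches the paper's own argument: the paper states this corollary ``follows immediately from Proposition \ref{sm:PropContSigmatPemp}'', relying on the previously cited uniform continuity of $d_H$ and $d_\Delta$ at measures satisfying \eqref{sm:Prob0Hyperplanes} and on the hypothesized dominating random field $g(\cdot)$, exactly as you do.
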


As the next result shows, the existence of $g(\cdot)$ in Corollary \ref{sm:CorContSigmatPempHalfSimp} is ensured by the finiteness of a moment of order $\alpha>1$ of $\vect{X} \sim P$.

\begin{corollary} \label{sm:CorPolDecayContSigmatPempHalfSimp}
If $\vect{X} \sim P$ satisfies \eqref{sm:Prob0Hyperplanes} and $\E[\norm{\vect{X}}^{\alpha}] < \infty$ for some $\alpha>1$, then $\sigma_H(\hat{P}_n)  \rightarrow \sigma_H(P)$ and $\sigma_\Delta(\hat{P}_n)  \rightarrow \sigma_\Delta(P)$ a.s.\ as $n \rightarrow \infty$.
\end{corollary}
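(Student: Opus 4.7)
The plan is to deduce the result from Corollary \ref{sm:CorContSigmatPempHalfSimp}, whose hypotheses are: (i) condition \eqref{sm:Prob0Hyperplanes}, which is given by assumption on $P$; and (ii) the existence of an integrable random field $g(\cdot)$ such that $d_H(\vect{x},\hat{P}_n) \leq g(\vect{x})$ (and a constant multiple of $g$ dominates $d_\Delta(\vect{x},\hat{P}_n)$) almost surely for all $n \in \mathbb{N}$ and $\vect{x} \in \mathbb{R}^m$. The whole task therefore reduces to producing such a $g$.

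Applying the same arguments used for \eqref{bound_halfspace_depth} and \eqref{bound_simplicial_depth}, but with $P$ replaced by the empirical measure $\hat{P}_n$, yields
\begin{equation*}
d_H(\vect{x},\hat{P}_n) \leq \hat{\bP}_n(\norm{\vect{X}}\geq \norm{\vect{x}}) \quad\text{and}\quad d_\Delta(\vect{x},\hat{P}_n) \leq (m+1)\,\hat{\bP}_n(\norm{\vect{X}}\geq \norm{\vect{x}}),
\end{equation*}
so it suffices to dominate the empirical tail probability $\hat{\bP}_n(\norm{\vect{X}}\geq \norm{\vect{x}})$ by an integrable, $n$-independent (but possibly random) function of $\vect{x}$. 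Markov's inequality applied to $\hat{P}_n$ gives, for $\vect{x}\neq \vect{0}$,
\begin{equation*}
\hat{\bP}_n(\norm{\vect{X}}\geq \norm{\vect{x}}) \leq \frac{1}{\norm{\vect{x}}^\alpha}\cdot \frac{1}{n}\sum_{i=1}^n \norm{\vect{X}_i}^\alpha,
\end{equation*}
and by the strong law of large numbers applied to the i.i.d.\ nonnegative variables $\norm{\vect{X}_i}^\alpha$ with common finite mean $\E[\norm{\vect{X}}^\alpha]$, the sequence $n^{-1}\sum_{i=1}^n \norm{\vect{X}_i}^\alpha$ converges almost surely. Consequently,
\begin{equation*}
C \coloneqq \sup_{n\in\mathbb{N}}\, \frac{1}{n}\sum_{i=1}^n \norm{\vect{X}_i}^\alpha < \infty \quad\text{almost surely,}
\end{equation*}
so combining with the trivial bound $\hat{\bP}_n(\cdot)\leq 1$ produces the random polynomial-decay envelope $g(\vect{x}) = (m+1)\,\min(1, C/\norm{\vect{x}}^\alpha)$, of the same form as in \eqref{dominating_function_empirical_halfspace_depth} but with $C$ random.

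The main obstacle is upgrading the pointwise SLLN to a uniform (in $n$) almost sure bound on the empirical tail; this is precisely what the single constant $C$ accomplishes. The remaining step, almost sure integrability of $g(\cdot)$ on $\mathbb{R}^m$, is the same integrability issue controlled via polynomial decay in the proofs of Corollary \ref{sm:CorDispFinHalfSimp} and Corollary \ref{sm:CorPolDecayHalfSimp}, and the argument is identical here on the almost sure event where $C$ is finite. With this envelope in hand, Corollary \ref{sm:CorContSigmatPempHalfSimp} delivers $\sigma_H(\hat{P}_n)\to \sigma_H(P)$ and $\sigma_\Delta(\hat{P}_n)\to \sigma_\Delta(P)$ almost surely as $n\to\infty$.
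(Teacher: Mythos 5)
Your proposal is correct and follows essentially the same route as the paper's proof: bound the empirical depths by the empirical tail probability via \eqref{bound_halfspace_depth}--\eqref{bound_simplicial_depth}, apply Markov's inequality to $\hat{P}_n$, use the strong law of large numbers to obtain a single almost surely finite random constant bounding $n^{-1}\sum_{i=1}^n \norm{\vect{X}_i}^{\alpha}$ uniformly in $n$, and feed the resulting random polynomial-decay envelope into Corollary \ref{sm:CorContSigmatPempHalfSimp}. The only difference is the cosmetic form of the envelope ($\min(1, C\norm{\vect{x}}^{-\alpha})$ versus $C(1+\norm{\vect{x}})^{-\alpha}$), which changes nothing of substance.
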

\begin{proof}
By \eqref{bound_halfspace_depth} and \eqref{bound_simplicial_depth} with $P$ replaced by $\hat{P}_{n}$, it holds that
\begin{equation*}
d_H(\vect{x},\hat{P}_n) \leq \frac{1}{n} \sum_{i=1}^n I( \norm{\vect{X}_i} \geq \norm{\vect{x}})
\end{equation*}
and
\begin{equation*}
d_\Delta(\vect{x},\hat{P}_n) \leq (m+1) \frac{1}{n} \sum_{i=1}^n I( \norm{\vect{X}_i} \geq \norm{\vect{x}}).
\end{equation*}
Now, 
\begin{equation*}
\frac{1}{n} \sum_{i=1}^n I( \norm{\vect{X}_i} \geq \norm{\vect{x}}) \leq \frac{1}{\norm{\vect{x}}^{\alpha}} \frac{1}{n} \sum_{i=1}^{n} \norm{\vect{X}_i}^{\alpha}
\end{equation*}
and by the strong law of large numbers, as $n \to \infty$,
\begin{equation} \label{sm:ConvEmpMeans}
\frac{1}{n} \sum_{i=1}^{n} \norm{\vect{X}_i}^{\alpha} \to \E \norm{\vect{X}}^{\alpha} \quad \text{a.s.}
\end{equation}
$\E[\norm{\vect{X}}^{\alpha}]<\infty$ implies that, there exists a random variable $C_{0} > \E\norm{\vect{X}}^{\alpha}$ (depending on the sequence $(\vect{X}_i)_{i=1}^{\infty}$) such that $\frac{1}{n} \sum_{i=1}^{n} \norm{\vect{X}_i}^{\alpha} \leq C_0$ a.s.\ for all $n \in \mathbb{N}$. Let $g(\cdot)$ be the random field given by 
\begin{equation} \label{dominating_function_projected_empirical_halfspace_depth}
g(\vect{x}) = \frac{C}{(1+\norm{\vect{x}})^{\alpha}},
\end{equation}
where $C=2^{\alpha} \max(1,C_0)$. Since $g(\cdot)$ (resp.\ $(m+1) g(\cdot)$) dominates $d_H(\cdot,\hat{P}_n)$ (resp.\ $d_\Delta(\cdot,\hat{P}_n)$) a.s., we conclude by Corollary \ref{sm:CorContSigmatPempHalfSimp} that $\sigma_H(\hat{P}_n)  \rightarrow \sigma_H(P)$ and $\sigma_\Delta(\hat{P}_n) \rightarrow \sigma_\Delta(P)$ a.s.
\end{proof} \\

As we show in the next corollaries, the above asymptotic results hold also in subspaces.

\begin{corollary} \label{sm:CorContSigmatPempBqHalfSimp}
  Suppose that $\vect{X} \sim P$ satisfies $\E[\norm{\vect{X}}]<\infty$, $B_{q}^k \rightarrow B_q$ and that $d( \cdot , \cdot)$ is continuous at $P_{B_q}$. If $g(\cdot)$ is an integrable random field such that $d(\vect{y},(\hat{P}_{n})_{B_{q}^{k}}) \leq g(\vect{y})$ a.s.\ for all $k,n \in \mathbb{N}$ and $\vect{y} \in \mathbb{R}^q$, then $\sigma((\hat{P}_{n})_{B_{q}^{k}})  \rightarrow \sigma(P_{B_q})$ a.s.\ as $k,n \to \infty$.
\end{corollary}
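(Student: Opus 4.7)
The plan is to combine Varadarajan's theorem (which gives $\hat{P}_{n} \stackrel{w}{\rightarrow} P$ almost surely) with Proposition \ref{sm:PropConvPBq} to deduce that $(\hat{P}_{n})_{B_{q}^{k}} \stackrel{w}{\rightarrow} P_{B_{q}}$ almost surely as $n,k \to \infty$. From this, the assumed continuity of $d(\cdot,\cdot)$ at $P_{B_{q}}$ yields pointwise almost-sure convergence of $d(\vect{y},(\hat{P}_{n})_{B_{q}^{k}})$ to $d(\vect{y},P_{B_{q}})$ on $\mathbb{R}^{q}$, and the domination hypothesis allows one to pass the limit through the integral via Lebesgue's dominated convergence theorem, yielding $\sigma((\hat{P}_{n})_{B_{q}^{k}}) \to \sigma(P_{B_{q}})$ a.s.

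The one step that requires care, and is the main technical obstacle, is checking the hypothesis of Proposition \ref{sm:PropConvPBq} in the empirical setting: that hypothesis demands $\limsup_{n \to \infty} \E[\norm{\vect{Y}_{n}}] < \infty$, where here $\vect{Y}_{n} \sim \hat{P}_{n}$. Since $\E_{\vect{Y} \sim \hat{P}_{n}}[\norm{\vect{Y}}] = \frac{1}{n}\sum_{i=1}^{n} \norm{\vect{X}_{i}}$, the assumption $\E[\norm{\vect{X}}] < \infty$ together with the strong law of large numbers gives
\begin{equation*}
\frac{1}{n}\sum_{i=1}^{n} \norm{\vect{X}_{i}} \longrightarrow \E[\norm{\vect{X}}] < \infty \quad \text{a.s.},
\end{equation*}
so the required $\limsup$ is finite almost surely. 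Working on the probability-one event on which both Varadarajan's theorem and this SLLN hold, Proposition \ref{sm:PropConvPBq} applies (with $P_{n}$ replaced by $\hat{P}_{n}$) and yields $(\hat{P}_{n})_{B_{q}^{k}} \stackrel{w}{\rightarrow} P_{B_{q}}$ as $n,k \to \infty$.

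Continuity of $d(\cdot,\cdot)$ at $P_{B_{q}}$ then gives, for every fixed $\vect{y} \in \mathbb{R}^{q}$, $d(\vect{y},(\hat{P}_{n})_{B_{q}^{k}}) \to d(\vect{y},P_{B_{q}})$ almost surely as $n,k \to \infty$. Because $d(\vect{y},(\hat{P}_{n})_{B_{q}^{k}}) \le g(\vect{y})$ a.s. for all $n,k$ with $g$ integrable, dominated convergence (applied on the full-measure event above) gives
\begin{equation*}
\sigma((\hat{P}_{n})_{B_{q}^{k}}) = \int_{\mathbb{R}^{q}} d(\vect{y},(\hat{P}_{n})_{B_{q}^{k}}) \, d\vect{y} \longrightarrow \int_{\mathbb{R}^{q}} d(\vect{y},P_{B_{q}}) \, d\vect{y} = \sigma(P_{B_{q}}) \quad \text{a.s.}
\end{equation*}
as $n,k \to \infty$, which is the desired conclusion. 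This proof mirrors the structure of Proposition \ref{sm:PropContSigmatPemp} and Corollary \ref{sm:CorContSigminPBq}, with the SLLN step replacing the deterministic moment hypothesis used in the non-empirical version.
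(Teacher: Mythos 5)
Your proof is correct and follows essentially the same route as the paper's: the strong law of large numbers supplies the first-moment bound for the empirical measures, Varadarajan's theorem gives $\hat{P}_n \stackrel{w}{\rightarrow} P$ a.s., the argument of Proposition \ref{sm:PropConvPBq} (applied pathwise on the almost-sure event) yields $(\hat{P}_{n})_{B_{q}^{k}} \stackrel{w}{\rightarrow} P_{B_q}$, and dominated convergence with the hypothesized $g(\cdot)$ finishes the proof. The only cosmetic difference is that the paper re-runs the estimates of Proposition \ref{sm:PropConvPBq} and then cites Proposition \ref{sm:PropContSigmatPemp} for the last step, whereas you invoke the proposition directly on the good event and spell out the dominated-convergence argument; both are equivalent.
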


\begin{proof}
  Since $\vect{X}_{i} \sim P$ are i.i.d., by the strong law of large numbers, it holds that
\begin{equation} \label{strong_law_large_numbers}
  \int_{\mathbb{R}^m} \norm{\vect{x}} \, d \hat{P}_n(\vect{x}) = \frac{1}{n} \sum_{i=1}^{n} \norm{\vect{X}_{i}} \to \E[\norm{\vect{X}}] \text{ a.s.}
\end{equation}
Also, using that $\hat{P}_n \stackrel{w}{\rightarrow} P$ a.s.\ \citep{varadarajan1958}, we have that, for every Lipschitz continuous and bounded function $f : \mathbb{R}^q \rightarrow \mathbb{R}$,
\begin{equation} \label{weak_convergence_of_projected_empirical_measure}
\vert \int_{\mathbb{R}^m} f(B_{q}\vect{x}) \, d (\hat{P}_n(\vect{x}) - P(\vect{x})) \vert \rightarrow 0 \quad \text{a.s.}
\end{equation}
Using \eqref{f_lipschitz}, \eqref{strong_law_large_numbers}, and \eqref{weak_convergence_of_projected_empirical_measure}, we see, as in the proof of Proposition \ref{sm:PropConvPBq}, that $(\hat{P}_{n})_{B_{q}^{k}} \stackrel{w}{\rightarrow} P_{B_q}$ a.s.\ as $n,k \rightarrow \infty$. By Proposition \ref{sm:PropContSigmatPemp}, we conclude that $\sigma((\hat{P}_{n})_{B_{q}^{k}})  \rightarrow \sigma(P_{B_q})$ a.s.
\end{proof} \\

The next result shows convergence of the dispersion measure based on the empirical halfspace and simplicial depth.

\begin{corollary} \label{sm:CorPolDecayContSigminPempBqHalfSimp}
  Suppose that $\vect{X} \sim P$ satisfies $\E[\norm{\vect{X}}^{\alpha}] < \infty$ for some $\alpha>1$ and $B_{q}^k \rightarrow B_q$. If $P_{B_q}$ satisfies \eqref{sm:Prob0Hyperplanes} in $\mathbb{R}^q$, then $\sigma_H((\hat{P}_{n})_{B_{q}^{k}})  \rightarrow \sigma_H(P_{B_q})$ and $\sigma_\Delta((\hat{P}_{n})_{B_{q}^{k}})  \rightarrow \sigma_\Delta(P_{B_q})$ a.s.\ as $k,n \to \infty$.
\end{corollary}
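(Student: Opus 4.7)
The plan is to apply Corollary \ref{sm:CorContSigmatPempBqHalfSimp} with $d$ taken to be, in turn, the halfspace depth $d_H$ and the simplicial depth $d_\Delta$. Three hypotheses must be verified: (i) $\E[\norm{\vect{X}}] < \infty$; (ii) $d(\cdot,\cdot)$ is continuous at $P_{B_q}$; and (iii) an integrable (random) field $g(\cdot)$ dominates $d(\vect{y},(\hat{P}_n)_{B_q^k})$ a.s.\ uniformly in $n,k \in \mathbb{N}$ and $\vect{y} \in \mathbb{R}^q$. Hypothesis (i) is immediate from $\E[\norm{\vect{X}}^\alpha] < \infty$ together with Jensen's inequality, since $\E[\norm{\vect{X}}] \leq \E[\norm{\vect{X}}^\alpha]^{1/\alpha}$. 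Hypothesis (ii) follows from the assumed property \eqref{sm:Prob0Hyperplanes} for $P_{B_q}$, combined with the uniform continuity results of \citet{nagy2016} (Theorem A.3) and \citet{dumbgen1992} (Corollary 2) invoked just before Corollary \ref{sm:CorContSigminPHalfSimp}.

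The substantive step is hypothesis (iii), and it hinges on the geometric remark that every $B_q^k \in \mathcal{B}^{q\times m}$ is a contraction: because its rows are orthonormal, $\norm{B_q^k \vect{x}} \leq \norm{\vect{x}}$ for all $\vect{x} \in \mathbb{R}^m$. Applying the pointwise bounds \eqref{bound_halfspace_depth} and \eqref{bound_simplicial_depth} to the empirical measure $(\hat{P}_n)_{B_q^k}$ on $\mathbb{R}^q$ and then exploiting the contraction property, we obtain, for every $\vect{y} \neq \vect{0}$,
\begin{equation*}
d_H(\vect{y},(\hat{P}_n)_{B_q^k}) \leq \frac{1}{n} \sum_{i=1}^n I(\norm{B_q^k \vect{X}_i} \geq \norm{\vect{y}}) \leq \frac{1}{n} \sum_{i=1}^n I(\norm{\vect{X}_i} \geq \norm{\vect{y}}),
\end{equation*}
together with the analogous bound for $d_\Delta$ with an extra factor $q+1$. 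The right-hand side no longer involves $B_q^k$ or $k$, reducing the problem to exactly the setting treated in the proof of Corollary \ref{sm:CorPolDecayContSigmatPempHalfSimp}.

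From that point the argument proceeds verbatim along the lines of Corollary \ref{sm:CorPolDecayContSigmatPempHalfSimp}: Markov's inequality gives $I(\norm{\vect{X}_i} \geq \norm{\vect{y}}) \leq \norm{\vect{X}_i}^\alpha/\norm{\vect{y}}^\alpha$, and the strong law of large numbers applied to the i.i.d.\ sequence $(\norm{\vect{X}_i}^\alpha)_{i=1}^\infty$, whose mean $\E[\norm{\vect{X}}^\alpha]$ is finite by hypothesis, produces an a.s.\ finite random constant $C_0$ bounding $n^{-1}\sum_{i=1}^n \norm{\vect{X}_i}^\alpha$ uniformly in $n$. Setting $g(\vect{y}) = 2^\alpha \max(1,C_0)(1+\norm{\vect{y}})^{-\alpha}$, as in \eqref{dominating_function_projected_empirical_halfspace_depth}, yields the required integrable random field dominating $d_H(\vect{y},(\hat{P}_n)_{B_q^k})$ and $(q+1)^{-1} d_\Delta(\vect{y},(\hat{P}_n)_{B_q^k})$ a.s., uniformly in $n$ and $k$. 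Corollary \ref{sm:CorContSigmatPempBqHalfSimp} then delivers both convergence statements. The only genuinely new ingredient relative to the unprojected case is the contraction bound, which is elementary but essential to strip the $k$-dependence out of the dominating field; once that is in hand, the rest is a routine reapplication of the previous corollary.
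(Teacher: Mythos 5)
Your proof is correct and follows essentially the same route as the paper: the paper also deduces the corollary from Corollary \ref{sm:CorContSigmatPempBqHalfSimp}, using continuity of $d_H$ and $d_\Delta$ at $P_{B_q}$ under \eqref{sm:Prob0Hyperplanes} and the contraction bound $\norm{B_q^k \vect{x}} \leq \norm{\vect{x}}$ to obtain the $k$-independent dominating fields $g(\cdot)$ and $(q+1)g(\cdot)$ from \eqref{dominating_function_projected_empirical_halfspace_depth}. You simply spell out the details (Jensen for the first-moment hypothesis, Markov plus the strong law for the random constant $C_0$) that the paper leaves implicit by reference to the proof of Corollary \ref{sm:CorPolDecayContSigmatPempHalfSimp}.
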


Corollary \ref{sm:CorPolDecayContSigminPempBqHalfSimp} follows from Corollary \ref{sm:CorContSigmatPempBqHalfSimp} using the continuity of halfspace and simplicial depth at $P_{B_q}$. Since $\E [\norm{B_{q}^k \vect{X}}^{\alpha}] \leq \E [\norm{\vect{X}}^{\alpha}] < \infty$ for all $k \in \mathbb{N}$, the dominating functions for empirical halfspace and simplicial depth in $\mathbb{R}^{q}$ are $g(\cdot)$ and $(q+1)g(\cdot)$, where $g(\cdot)$ is given by \eqref{dominating_function_projected_empirical_halfspace_depth}. Finally, we recall that a sufficient condition for $P_{B_{q}}$ to satisfy \eqref{sm:Prob0Hyperplanes} is that $P$ is absolutely continuous (cf.\ Proposition \ref{proposition_absolutely_continuity_of_projection}).

\subsection{Existence and uniqueness of minimizers and maximizers for the dispersion measure $\sigma(\cdot)$}
\label{sm:subsection_existence_and_uniqueness_of_minimizers}

In this subsection we give conditions for the existence and uniqueness of minimizers and maximizers for the dispersion measure $\sigma(\cdot)$ with respect to subspaces $S_q \subset \mathbb{R}^m$. For $k,l \in \mathbb{N}$ let $(\mathcal{M}^{k \times l},\norm{\cdot}_{\mathcal{M}^{k \times l}})$ be the set of $k \times l$ matrices endowed with the matrix norm $\norm{M}_{\mathcal{M}^{k \times l}}= \sup_{\vect{z} \in \mathbb{R}^l \setminus \{ \vect{0} \}} \frac{\norm{M \vect{z}}}{\norm{\vect{z}}}$. We begin with the following lemma.

\begin{lemma} \label{sm:LemmaBqmCompact}
$\mathcal{B}^{q \times m}$ is a compact subset of $(\mathcal{M}^{q \times m},\norm{\cdot}_{\mathcal{M}^{q \times m}})$.
\end{lemma}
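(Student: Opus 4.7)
The plan is to invoke the Heine--Borel theorem: since $\mathcal{M}^{q \times m}$ is a finite-dimensional normed vector space (of dimension $qm$, with all norms equivalent), it suffices to show that $\mathcal{B}^{q \times m}$ is both closed and bounded as a subset of $(\mathcal{M}^{q \times m},\norm{\cdot}_{\mathcal{M}^{q \times m}})$.

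For boundedness, I would observe that for any $B_q \in \mathcal{B}^{q \times m}$ and any $\vect{z} \in \mathbb{R}^m$,
\begin{equation*}
\norm{B_q \vect{z}}^2 \;=\; \vect{z}^{\top} B_q^{\top} B_q \vect{z} \;\leq\; \norm{\vect{z}}^2,
\end{equation*}
since $B_q^{\top} B_q$ is the orthogonal projection from $\mathbb{R}^m$ onto the $q$-dimensional row span of $B_q$ (as $B_q B_q^{\top}=I_q$ forces $B_q^{\top} B_q$ to be an idempotent symmetric matrix of rank $q$). Hence $\norm{B_q}_{\mathcal{M}^{q \times m}} \leq 1$, so $\mathcal{B}^{q \times m}$ is bounded (in fact it lies on the unit sphere, since equality holds when $\vect{z}$ lies in the row span).

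For closedness, I would note that the map $\Phi : \mathcal{M}^{q \times m} \to \mathcal{M}^{q \times q}$ defined by $\Phi(B)=B B^{\top}$ is continuous (it is polynomial in the entries, and continuity in entries is equivalent to continuity in the operator norm by the equivalence of norms in finite dimensions). Since $\mathcal{B}^{q \times m} = \Phi^{-1}(\{I_q\})$ and $\{I_q\}$ is closed, the preimage is closed in $\mathcal{M}^{q \times m}$.

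Combining these two facts with Heine--Borel yields compactness. The argument is essentially routine and I do not anticipate any real obstacle; the only minor care needed is to confirm that the operator norm $\norm{\cdot}_{\mathcal{M}^{q \times m}}$ induces the same topology as, say, the Frobenius norm (which is immediate from the equivalence of all norms on the finite-dimensional space $\mathcal{M}^{q \times m}$), so that both the boundedness-implies-precompactness step and the continuity of $\Phi$ can be stated directly in the chosen norm.
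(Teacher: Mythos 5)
Your proposal is correct and follows essentially the same route as the paper: Heine--Borel via closedness (as the preimage of $\{I_q\}$ under the continuous map $B \mapsto B B^{\top}$) and boundedness ($\norm{B_q}_{\mathcal{M}^{q \times m}} \leq 1$). The only cosmetic difference is that you justify continuity of $B \mapsto B B^{\top}$ by polynomiality of the entries and norm equivalence, while the paper gives a direct operator-norm estimate; both are fine.
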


\begin{proof}
  It is enough to show that $\mathcal{B}^{q \times m}$ is closed and bounded. Let $\varphi \, : \, \mathcal{M}^{q \times m} \rightarrow \mathcal{M}^{q \times q}$ be given by $M \mapsto M M^{\top}$. If $M_{n} \rightarrow M$ as $n \to \infty$, then
\begin{align*}
  \norm{ \varphi(M_{n})-\varphi(M)}_{\mathcal{M}_{q \times q}} &\leq \norm{ M_{n} }_{\mathcal{M}^{q \times m}} \norm{ (M_{n} - M)^{\top}}_{\mathcal{M}^{m \times q}} \\
  &+ \, \norm{ M_{n} - M}_{\mathcal{M}^{q \times m}} \norm{M^{\top}}_{\mathcal{M}^{m \times q}} \rightarrow 0.
\end{align*}
Hence, $\varphi(\cdot)$ is continuous. Now, since $\{ I_q \}$ is closed in $(\mathcal{M}^{q \times q},\norm{\cdot}_{\mathcal{M}^{q \times q}})$, we have that $\mathcal{B}^{q \times m}=\varphi^{-1}(\{ I_q \})$ is closed in $(\mathcal{M}^{q \times m},\norm{\cdot}_{\mathcal{M}^{q \times m}})$. Finally, since, for all $B_q \in \mathcal{B}^{q \times m}$, $\norm{B_q \vect{z}} \leq \norm{\vect{z}}$, we have that
\begin{equation*}
\norm{B_q}_{\mathcal{B}^{q \times m}} = \sup_{\vect{z} \in \mathbb{R}^m \setminus \{ \vect{0} \} } \frac{\norm{B_q \vect{z}}}{\norm{\vect{z}}} \leq 1
\end{equation*}
and $\mathcal{B}^{q \times m}$ is bounded.
\end{proof}

\begin{proposition} \label{sm:PropMaxMinDispMeas}
Suppose that $\E[\norm{\vect{X}}]<\infty$, where $\vect{X} \sim P$, $d(\cdot , \cdot)$ is continuous at $P_{B_q}$ for all $B_q \in \mathcal{B}^{q \times m}$, and there exists an integrable function $g(\cdot)$ such that $\sup_{B_q \in \mathcal{B}^{q \times m}} d(\vect{y},P_{B_q}) \leq g(\vect{y})$ for all $\vect{y} \in \mathbb{R}^q$. Then, the function $\phi: \mathcal{B}^{q \times m} \rightarrow \mathbb{R}$ given by $B_q \mapsto \sigma(P_{B_q})$ is continuous and it assumes maximum and minimum values on $\mathcal{B}^{q \times m}$.
\end{proposition}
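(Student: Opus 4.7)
The plan is to first establish that $\phi$ is continuous on $\mathcal{B}^{q \times m}$ and then combine this with the compactness of $\mathcal{B}^{q \times m}$ from Lemma \ref{sm:LemmaBqmCompact} to invoke the Weierstrass extreme value theorem. The second step is automatic once continuity is in hand, so the entire substance of the argument lies in proving sequential continuity of $B_q \mapsto \sigma(P_{B_q})$.

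For continuity, I would fix $B_q \in \mathcal{B}^{q \times m}$, take an arbitrary sequence $B_q^k \to B_q$ in the matrix norm, and then specialize the weak-convergence-under-projection machinery of Proposition \ref{sm:PropConvPBq} to the constant sequence $P_n \equiv P$. The hypothesis $\limsup_{n\to\infty}\E[\norm{\vect{X}_n}]<\infty$ collapses to $\E[\norm{\vect{X}}]<\infty$, which is assumed, and $P_n \stackrel{w}{\to} P$ is trivial. Proposition \ref{sm:PropConvPBq} therefore yields $P_{B_q^k} \stackrel{w}{\to} P_{B_q}$ as $k \to \infty$. The assumption that $d(\cdot,\cdot)$ is continuous at $P_{B_q}$ then upgrades this to pointwise convergence $d(\vect{y},P_{B_q^k}) \to d(\vect{y},P_{B_q})$ for every $\vect{y} \in \mathbb{R}^q$.

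With the uniform bound $d(\vect{y},P_{B_q^k}) \leq g(\vect{y})$ supplied in the hypothesis and $g$ integrable, Lebesgue's dominated convergence theorem gives
\begin{equation*}
\phi(B_q^k) = \int_{\mathbb{R}^q} d(\vect{y},P_{B_q^k})\, d\vect{y} \;\longrightarrow\; \int_{\mathbb{R}^q} d(\vect{y},P_{B_q})\, d\vect{y} = \phi(B_q),
\end{equation*}
which is exactly the content of Corollary \ref{sm:CorContSigminPBq} specialized to a constant sequence. This proves continuity. To finish, I would invoke Lemma \ref{sm:LemmaBqmCompact} (compactness of $\mathcal{B}^{q \times m}$ in $(\mathcal{M}^{q\times m},\norm{\cdot}_{\mathcal{M}^{q\times m}})$) and apply the extreme value theorem to conclude that $\phi$ attains both its maximum and its minimum on $\mathcal{B}^{q \times m}$.

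There is no genuine obstacle here; the proposition is essentially a packaging result that combines three items already established in the section: the weak-convergence transfer under projection (Proposition \ref{sm:PropConvPBq}), the dominated-convergence argument for $\sigma(\cdot)$ (Proposition \ref{sm:PropContSigmatP}), and the compactness of $\mathcal{B}^{q \times m}$ (Lemma \ref{sm:LemmaBqmCompact}). The only care required is verifying that the hypotheses of Proposition \ref{sm:PropConvPBq} hold for a constant probability sequence, which is immediate from $\E[\norm{\vect{X}}]<\infty$.
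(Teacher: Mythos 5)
Your proposal is correct and follows essentially the same route as the paper's proof: apply Proposition \ref{sm:PropConvPBq} with the constant sequence $P_n \equiv P$ to get $P_{B_q^k} \stackrel{w}{\rightarrow} P_{B_q}$, invoke continuity of the depth at $P_{B_q}$ together with the dominating function $g$ and dominated convergence (the content of Proposition \ref{sm:PropContSigmatP}) to obtain continuity of $\phi$, and conclude via compactness of $\mathcal{B}^{q\times m}$ from Lemma \ref{sm:LemmaBqmCompact}. No gaps.
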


\begin{proof} 
By Proposition \ref{sm:PropConvPBq} with $P_{n} = P$, if $B_q^k \rightarrow B_q$, then $P_{B_{q}^{k}} \stackrel{w}{\rightarrow} P_{B_q}$. Since $d(\cdot , \cdot)$ is continuous at $P_{B_q}$, Proposition \ref{sm:PropContSigmatP} implies that $\phi(\cdot)$ is continuous. Since $\mathcal{B}^{q \times m}$ is compact by Lemma \ref{sm:LemmaBqmCompact}, $\phi(\cdot)$ assumes maximum and minimum values on $\mathcal{B}^{q \times m}$.
\end{proof} \\

Let $\phi_H \, : \, \mathcal{B}^{q \times m} \rightarrow \mathbb{R}$ be given by $B_q \mapsto \sigma_H(P_{B_q})$ and $\phi_\Delta \, : \, \mathcal{B}^{q \times m} \rightarrow \mathbb{R}$ be given by $B_q \mapsto \sigma_\Delta(P_{B_q})$. We recall from Remark \ref{remark_polynomial_decay} that a sufficient condition for $\vect{X} \sim P$ to decay polynomially is that $\E[\norm{\vect{X}}^{\alpha}]<\infty$ for some $\alpha>1$.

\begin{corollary} \label{sm:CorMaxMinDispMeas}
  If $\vect{X} \sim P$ is absolutely continuous, decays polynomially and satisfies $\E[\norm{\vect{X}}]<\infty$, then the functions $\phi_H(\cdot)$ and $\phi_\Delta(\cdot)$ are continuous and assume maximum and minimum value in $\mathcal{B}^{q \times m}$.
\end{corollary}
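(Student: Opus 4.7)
The plan is to verify the three hypotheses of Proposition \ref{sm:PropMaxMinDispMeas} for the depth functions $d_H(\cdot,\cdot)$ and $d_\Delta(\cdot,\cdot)$, from which the conclusion follows immediately. The assumption $\E[\norm{\vect{X}}]<\infty$ is part of the statement, so the task reduces to (a) continuity of $d_H$ and $d_\Delta$ at $P_{B_q}$ for every $B_q \in \mathcal{B}^{q \times m}$, and (b) construction of a single integrable function $g(\cdot)$ on $\mathbb{R}^q$ that dominates $d_H(\vect{y},P_{B_q})$ and $d_\Delta(\vect{y},P_{B_q})$ uniformly in $B_q$.

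For (a), since $\vect{X} \sim P$ is absolutely continuous on $\mathbb{R}^m$, Proposition \ref{proposition_absolutely_continuity_of_projection} yields that $P_{B_q}$ is absolutely continuous on $\mathbb{R}^q$ for every $B_q \in \mathcal{B}^{q \times m}$, hence $P_{B_q}$ assigns probability zero to every hyperplane, that is, \eqref{sm:Prob0Hyperplanes} holds in $\mathbb{R}^q$. Together with the uniform continuity results cited after Corollary \ref{sm:CorContSigminPHalfSimp} (Theorem A.3 in \citet{nagy2016} for halfspace depth and Corollary 2 in \citet{dumbgen1992} for simplicial depth), this gives continuity of $d_H(\cdot,\cdot)$ and $d_\Delta(\cdot,\cdot)$ at $P_{B_q}$.

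For (b), the key observation is that projection by $B_q$ is nonexpansive: $\norm{B_q \vect{x}} \leq \norm{\vect{x}}$ for every $\vect{x} \in \mathbb{R}^m$ and every $B_q \in \mathcal{B}^{q \times m}$. Hence, if $P$ decays polynomially with constants $C>0$ and $\alpha>1$ in the sense of Definition \ref{sm:DefPolDecay}, then, for every $t \geq 0$ and every $B_q$,
\begin{equation*}
\bP(\norm{B_q \vect{X}} \geq t) \leq \bP(\norm{\vect{X}} \geq t) \leq \frac{C}{(1+t)^{\alpha}},
\end{equation*}
so the family $\{P_{B_q} : B_q \in \mathcal{B}^{q \times m}\}$ decays polynomially in $\mathbb{R}^q$ with the same constants. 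Applying the bounds \eqref{bound_halfspace_depth} and \eqref{bound_simplicial_depth} to $P_{B_q}$ and $B_q \vect{X}$ gives, for all $\vect{y} \in \mathbb{R}^q \setminus \{\vect{0}\}$,
\begin{equation*}
d_H(\vect{y},P_{B_q}) \leq \frac{C}{(1+\norm{\vect{y}})^{\alpha}}, \qquad d_\Delta(\vect{y},P_{B_q}) \leq \frac{(q+1)C}{(1+\norm{\vect{y}})^{\alpha}},
\end{equation*}
uniformly in $B_q$. By Corollary \ref{sm:CorDispFinHalfSimp} (applied in $\mathbb{R}^q$), the right-hand sides are integrable, providing the required dominating function $g(\cdot)$ for each depth.

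Since all three hypotheses of Proposition \ref{sm:PropMaxMinDispMeas} are satisfied by $d_H$ and by $d_\Delta$, that proposition implies that the maps $B_q \mapsto \sigma_H(P_{B_q})$ and $B_q \mapsto \sigma_\Delta(P_{B_q})$, namely $\phi_H(\cdot)$ and $\phi_\Delta(\cdot)$, are continuous on the compact set $\mathcal{B}^{q \times m}$ (see Lemma \ref{sm:LemmaBqmCompact}), and therefore attain both their maximum and their minimum values. The only non-mechanical step is the transfer of the dominating function from $\mathbb{R}^m$ to $\mathbb{R}^q$, which is handled cleanly by the nonexpansiveness of $B_q$; all other inputs are direct applications of results already established in the Supplemental Material.
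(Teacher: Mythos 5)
Your proof is correct and follows essentially the same route as the paper's: both obtain continuity of the depths at $P_{B_q}$ from Proposition \ref{proposition_absolutely_continuity_of_projection}, transfer the polynomial decay to the projected family via $\norm{B_q\vect{X}}\le\norm{\vect{X}}$ to produce the uniform dominating functions from \eqref{bound_halfspace_depth} and \eqref{bound_simplicial_depth}, and then invoke Proposition \ref{sm:PropMaxMinDispMeas}. The only cosmetic difference is your explicit appeal to Corollary \ref{sm:CorDispFinHalfSimp} for integrability of the dominating function, which the paper leaves implicit.
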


\begin{proof}
  We first observe that, by Proposition \ref{proposition_absolutely_continuity_of_projection}, $P_{B_q}$ is absolutely continuous for all $B_q \in \mathcal{B}^{q \times m}$. In particular, $P_{B_q}$ satisfies \eqref{sm:Prob0Hyperplanes} in $\mathbb{R}^q$, and $d_H(\cdot,\cdot)$, $d_\Delta(\cdot,\cdot)$ are continuous at $P_{B_q}$. Let  $\vect{X} \sim P$. Since $\norm{B_q \vect{X}} \leq \norm{\vect{X}}$ for all $B_q \in \mathcal{B}^{q \times m}$, and $P$ decays polynomially, there exist $C>0$ and $\alpha>1$ such that
\begin{equation*}
P(\norm{{B_q} \vect{X}} \geq t) \leq \bP(\norm{\vect{X}} \geq t) \leq \frac{C}{(1+t)^{\alpha}},
\end{equation*}
that is, $\{ P_{B_q} \}_{B_q \in \mathcal{B}^{q \times m}}$ decays polynomially. It follows that
\begin{equation*}
\sup_{B_q \in \mathcal{B}^{q \times m}} d_H(\vect{y},P_{B_q}) \leq \sup_{B_q \in \mathcal{B}^{q \times m}} \bP(\norm{{B_q} \vect{X}} \geq \norm{\vect{y}}) \leq \frac{C}{(1+\norm{\vect{y}})^{\alpha}}
\end{equation*}
and
\begin{equation*}
\sup_{B_q \in \mathcal{B}^{q \times m}} d_\Delta(\vect{y},P_{B_q}) \leq (q+1) \sup_{B_q \in \mathcal{B}^{q \times m}} \bP(\norm{{B_q} \vect{X}} \geq \norm{\vect{y}}) \leq \frac{(q+1)C}{(1+\norm{\vect{y}})^{\alpha}}.
\end{equation*}
The result now follows from Proposition \ref{sm:PropMaxMinDispMeas}.
\end{proof} \\

Corollary \ref{sm:CorMaxMinDispMeas} gives the existence of maxima and minima for the dispersion measures $\sigma_H(\cdot)$ and $\sigma_\Delta(\cdot)$ with respect to subspaces $S_q \subset \mathbb{R}^m$. In general, they are not unique. For example, if $P$ is spherically symmetric then $\{ P_{B_q} \}_{B_q \in \mathcal{B}^{q \times m}}$ have all the same distribution. 

\begin{definition}[Unique maximizer and minimizer]
$B_{q}^{*}$ is a unique maximizer (resp.\ minimizer) for $\phi(\cdot)$ with respect to $P$ if $\sigma(P_{B_{q}^{*}}) > \sigma(P_{B_{q}})$ (resp.\ $\sigma(P_{B_{q}^{*}}) < \sigma(P_{B_{q}})$) for all $B_{q} \in \mathcal{B}^{q \times m} \setminus \{ B_{q}^{*} \}$.
\end{definition}

\begin{proposition} \label{sm:PropUniqueMaxMinDispMeas}
  Suppose that $\vect{X} \sim P$ is absolutely continuous and satisfies $\E[\norm{\vect{X}}^{\alpha}] < \infty$ for some $\alpha>1$. Let $B_{q}^{*}$ be a unique maximizer (resp.\ minimizer) for $\phi_L(\cdot)$, $L=H,\Delta$. If $(\hat{B}_q^{n})_{n=1}^{\infty}$ satisfies
\begin{equation*}
  \sigma_L((\hat{P}_{n})_{\hat{B}_{q}^{n}})=\max_{B_q \in \mathcal{B}^{q \times m}} \sigma_L((\hat{P}_{n})_{B_{q}}) \text{ (resp.\ } =\min_{B_q \in \mathcal{B}^{q \times m}} \sigma_L((\hat{P}_{n})_{B_{q}})\text{)},
\end{equation*}
then $\hat{B}_q^{n} \to B_{q}^{*}$ a.s.\ as $n \to \infty$.
\end{proposition}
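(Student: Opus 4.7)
The plan is to run a standard argmax (or argmin) consistency argument, leveraging the compactness of $\mathcal{B}^{q \times m}$ established in Lemma \ref{sm:LemmaBqmCompact} together with the joint continuity result in Corollary \ref{sm:CorPolDecayContSigminPempBqHalfSimp}. Since $\mathcal{B}^{q \times m}$ is compact, it suffices to show that every subsequence of $(\hat{B}_q^n)$ admits a further subsequence converging almost surely to $B_q^{*}$; the convergence of the whole sequence then follows by the usual Urysohn-type argument.

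First I would fix a sample path outside the almost-sure null set on which both Varadarajan's theorem ($\hat{P}_n \stackrel{w}{\to} P$) and the strong law of large numbers ($n^{-1}\sum_{i=1}^n \norm{\vect{X}_i}^{\alpha} \to \E[\norm{\vect{X}}^{\alpha}]$) fail; call the good set $\Omega_0$. By Remark \ref{remark_polynomial_decay}, on $\Omega_0$ the empirical measures $(\hat{P}_n)$ decay polynomially with a uniform (random) constant. Then, given any subsequence $(\hat{B}_q^{n_k})$, by compactness I extract a further subsequence (still denoted $(\hat{B}_q^{n_k})$) converging to some $\tilde{B}_q \in \mathcal{B}^{q \times m}$. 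Applying Corollary \ref{sm:CorPolDecayContSigminPempBqHalfSimp} along the convergent sequence $\hat{B}_q^{n_k} \to \tilde{B}_q$ yields
\begin{equation*}
\sigma_L((\hat{P}_{n_k})_{\hat{B}_q^{n_k}}) \to \sigma_L(P_{\tilde{B}_q}) \quad \text{a.s.},
\end{equation*}
and applying it with the constant sequence $B_q^k \equiv B_q^{*}$ gives
\begin{equation*}
\sigma_L((\hat{P}_{n_k})_{B_q^{*}}) \to \sigma_L(P_{B_q^{*}}) \quad \text{a.s.}
\end{equation*}
Using the maximizing (resp.\ minimizing) property of $\hat{B}_q^{n_k}$, one has $\sigma_L((\hat{P}_{n_k})_{\hat{B}_q^{n_k}}) \ge \sigma_L((\hat{P}_{n_k})_{B_q^{*}})$ (resp.\ $\le$) for every $k$. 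Passing to the limit yields $\sigma_L(P_{\tilde{B}_q}) \ge \sigma_L(P_{B_q^{*}})$ (resp.\ $\le$), and uniqueness of $B_q^{*}$ forces $\tilde{B}_q = B_q^{*}$.

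The main obstacle is ensuring that the ``a.s.'' statement can be coordinated across the continuum of limits $\tilde{B}_q$ that the random sequence $(\hat{B}_q^{n_k})$ may produce, so that the same null set works uniformly. The key point is that absolute continuity of $P$ implies, by Proposition \ref{proposition_absolutely_continuity_of_projection}, that $P_{B_q}$ is absolutely continuous for every $B_q \in \mathcal{B}^{q \times m}$; hence condition \eqref{sm:Prob0Hyperplanes} holds simultaneously for all $B_q$, and halfspace/simplicial depth is continuous at $P_{B_q}$ for every $B_q$. Combined with the single null set coming from Varadarajan's theorem and the SLLN, this guarantees that the weak convergence argument underlying Proposition \ref{sm:PropConvPBq} and the dominated convergence step (with the uniform envelope $g(\vect{y}) = C/(1+\norm{\vect{y}})^{\alpha}$ from \eqref{dominating_function_projected_empirical_halfspace_depth}, whose random constant $C$ depends only on the sample path through $n^{-1}\sum \norm{\vect{X}_i}^{\alpha}$) can be carried out on $\Omega_0$ for any realization of $(\hat{B}_q^{n_k})$ and its limit $\tilde{B}_q$, closing the argument.
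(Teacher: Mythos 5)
Your proof is correct, but it takes a genuinely different route from the paper's. You run the compactness/subsequence version of the argmax-consistency argument: extract a convergent subsequence of the random maximizers, pass to the limit in the empirical optimality inequality $\sigma_L((\hat{P}_{n_k})_{\hat{B}_q^{n_k}}) \ge \sigma_L((\hat{P}_{n_k})_{B_q^{*}})$ using the sequential continuity of Corollary \ref{sm:CorPolDecayContSigminPempBqHalfSimp}, and invoke uniqueness of $B_q^{*}$. The one delicate point --- that an almost-sure convergence result is being applied along a \emph{random} sequence of projection matrices --- you handle correctly: the exceptional null set comes only from Varadarajan's theorem and the strong law of large numbers, and outside that set the whole chain (weak convergence of $(\hat{P}_n)_{B_q^k}$ to $P_{\tilde{B}_q}$ via the Lipschitz estimate, uniform continuity of the depth at the absolutely continuous limit $P_{\tilde{B}_q}$ guaranteed by Proposition \ref{proposition_absolutely_continuity_of_projection}, and dominated convergence with the envelope from \eqref{dominating_function_projected_empirical_halfspace_depth}) is deterministic and valid for any $\omega$-dependent convergent sequence. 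The paper proceeds differently: it establishes the \emph{uniform} law $\sup_{B_q \in \mathcal{B}^{q \times m}} \abs{\sigma_L((\hat{P}_{n})_{B_{q}})-\sigma_L(P_{B_q})} \to 0$ a.s.\ (equation \eqref{sm:EqUnifConvInBq}), by showing that preimages $B_q^{-1}H$ of halfspaces and simplices lie in the Glivenko--Cantelli class of intersections of at most $q+1$ halfspaces of $\mathbb{R}^m$, and then combines this with the well-separation constant $\delta>0$ of the unique extremum. The paper's route produces a uniform convergence result over all of $\mathcal{B}^{q \times m}$ that is of independent interest and sidesteps the random-sequence subtlety entirely; yours is more economical, reusing the already-proved sequential corollaries and avoiding empirical process theory, at the cost of the careful pathwise bookkeeping you describe in your final paragraph.
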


\begin{proof}
  Since $B_{q}^{*}$ is a unique maximizer or minimizer it holds that, for all $\epsilon>0$,
\begin{equation*}
  \delta = \inf_{B_q \in \mathcal{B}^{q \times m} \, : \, \norm{B_q^{*}-B_q}_{\mathcal{B}^{q \times m}} \geq \epsilon} \abs{\sigma_L(P_{B_{q}^{*}})-\sigma_L(P_{B_q})}  > 0
\end{equation*}
and, for all $k \in \mathbb{N}$,
\begin{align*}
  \sup_{n \geq k} \abs{\sigma_L(P_{B_q^{*}})-\sigma_L(P_{\hat{B}_q^{n}})} & \leq \sup_{n \geq k} \abs{\sigma_L(P_{B_q^{*}})-\sigma_L((\hat{P}_{n})_{B_{q}^{*}})} \\
  &+ \sup_{n \geq k} \abs{\sigma_L((\hat{P}_{n})_{\hat{B}_{q}^{n}})-\sigma_L(P_{\hat{B}_q^{n}})}.
\end{align*}
Since
\begin{equation*}
  \sup_{n \geq k} \abs{\sigma_L((\hat{P}_{n})_{\hat{B}_{q}^{n}})-\sigma_L(P_{\hat{B}_q^{n}})} \leq \sup_{n \geq k} \sup_{B_q \in \mathcal{B}^{q \times m}} \abs{\sigma_L((\hat{P}_{n})_{B_{q}}))-\sigma_L(P_{B_q})},
\end{equation*}
we have that
\begin{align*}
\bP( \sup_{n \geq k} \norm{B_q^{*}-\hat{B}_q^{n}}_{\mathcal{B}^{q \times m}} \geq \epsilon) & \leq \bP( \sup_{n \geq k} \abs{\sigma_L(P_{B_q^{*}})-\sigma_L(P_{\hat{B}_q^{n}})} \geq \delta) \\
&\leq \bP( \sup_{n \geq k} \abs{\sigma_L(P_{B_q^{*}})-\sigma_L((\hat{P}_{n})_{B_{q}^{*}})} \geq \delta/2) \\
&+ \bP( \sup_{n \geq k} \sup_{B_q \in \mathcal{B}^{q \times m}} \abs{\sigma_L((\hat{P}_{n})_{B_{q}}))-\sigma_L(P_{B_q})} \geq \delta/2).
\end{align*}
We notice that, by Corollary \ref{sm:CorPolDecayContSigminPempBqHalfSimp}, $\abs{\sigma_L(P_{B_q^{*}})-\sigma_L((\hat{P}_{n})_{B_{q}^{*}})} \to 0$ a.s.\ as $n \to \infty$. To conclude the proof, it is enough to show that, as $n \to \infty$,
\begin{equation} \label{sm:EqUnifConvInBq}
\sup_{B_q \in \mathcal{B}^{q \times m}} \abs{\sigma_L((\hat{P}_{n})_{B_{q}}))-\sigma_L(P_{B_q})} \to 0 \quad \text{a.s.}
\end{equation}
Clearly,
\begin{equation*}
\sup_{B_q \in \mathcal{B}^{q \times m}} \abs{\sigma_L((\hat{P}_{n})_{B_{q}}))-\sigma_L(P_{B_q})} \leq \int_{\mathbb{R}^q} \sup_{B_q \in \mathcal{B}^{q \times m}} \abs{d_L(\vect{y},(\hat{P}_{n})_{B_{q}})-d_L(\vect{y},P_{B_q})} \, d \vect{y}.
\end{equation*}
Using that $\E[\norm{\vect{X}}^{\alpha}] < \infty$, as in the proof of Corollaries \ref{sm:CorPolDecayContSigmatPempHalfSimp}-\ref{sm:CorPolDecayContSigminPempBqHalfSimp}, we see that $\sup_{B_q \in \mathcal{B}^{q \times m}} d_L(\cdot,(\hat{P}_{n})_{B_{q}}) \leq g_{1,L}(\cdot)$ a.s., where $g(\cdot)$ is given in \eqref{dominating_function_projected_empirical_halfspace_depth}, $g_{1,H}(\cdot)=g(\cdot)$, and $g_{1,\Delta}(\cdot)=(q+1) g(\cdot)$. Similarly, as in the proof of Corollaries \ref{sm:CorPolDecay}-\ref{sm:CorPolDecayHalfSimp}, it follows that $\sup_{B_q \in \mathcal{B}^{q \times m}} d_L(\cdot ,P_{B_q}) \leq g_{2,L}(\cdot)$, where $g(\cdot)$ is now given in \eqref{dominating_function_empirical_halfspace_depth}, $g_{2,H}(\cdot)=g(\cdot)$, and $g_{2,\Delta}(\cdot)=(q+1) g(\cdot)$. Therefore,
\begin{equation*}
  \sup_{B_q \in \mathcal{B}^{q \times m}} \abs{d_L(\cdot ,(\hat{P}_{n})_{B_{q}}) - d_L(\cdot , P_{B_q})} \leq g_{1,L}(\cdot)+g_{2,L}(\cdot),
\end{equation*}
  where $g_{1,L}(\cdot)+g_{2,L}(\cdot)$ is integrable. Hence, \eqref{sm:EqUnifConvInBq} follows from Lebesgue's dominated convergence theorem if we show that, for $\vect{y} \in \mathbb{R}^q$ and $L=H,\Delta$, as $n \to \infty$,
\begin{equation} \label{sm:EqUnifConvDepthInBq}
\sup_{B_q \in \mathcal{B}^{q \times m}} \abs{d_L(\vect{y},(\hat{P}_{n})_{B_{q}})-d_L(\vect{y},P_{B_q})} \to 0 \quad \text{a.s.}
\end{equation}
Let $\mathcal{A}$ be the class given by intersecting of at most $q+1$ halfspaces in $\mathbb{R}^m$ and notice that $\mathcal{A}$ is Glivenko-Cantelli, i.e.\ $\sup_{A \in \mathcal{A}} \abs{\hat{P}_{n}(A)-P(A)} \to 0$ a.s.\ as $n \to \infty$. Since, for any $B_q \in \mathcal{B}^{q \times m}$ and halfspace $H \subset \mathbb{R}^q$, the preimage $B_q^{-1}H \in \mathcal{A}$, we have that, as $n \to \infty$,
\begin{align*}
\sup_{B_q \in \mathcal{B}^{q \times m}} \abs{d_H(\vect{y},(\hat{P}_{n})_{B_{q}})-d_H(\vect{y},P_{B_q})} &\leq \sup_{B_q \in \mathcal{B}^{q \times m}} \sup_{\vect{v} \in \mathcal{S}^{q-1}} \abs{(\hat{P}_{n})_{B_{q}}(H_{\vect{v},\vect{y}})-P_{B_q}(H_{\vect{v},\vect{y}})} \\
&= \sup_{B_q \in \mathcal{B}^{q \times m}} \sup_{\vect{v} \in \mathcal{S}^{q-1}} \abs{\hat{P}_{n}(B_q^{-1} H_{\vect{v},\vect{y}})-P(B_q^{-1} H_{\vect{v},\vect{y}})} \\
&\leq \sup_{A \in \mathcal{A}} \abs{\hat{P}_{n}(A)-P(A)} \to 0 \quad \text{a.s.}
\end{align*}
Since every simplex $\Delta \subset \mathbb{R}^q$ is given by the intersection of $q+1$ halfspaces $H_i$, i.e.\ $\Delta = \cap_{i=1}^{q+1}H_i$, the preimage $B_q^{-1} \Delta=\cap_{i=1}^{q+1} (B_q^{-1} H_i)$ is the intersection of $q+1$ halfspaces in $\mathbb{R}^m$. As in \citet{dumbgen1992}[Theorem 1], \citet{nagy2016}[Theorem A.7], we conclude that, as $n \to \infty$,
\begin{equation*}
\sup_{B_q \in \mathcal{B}^{q \times m}} \abs{d_\Delta(\vect{y},(\hat{P}_{n})_{B_{q}})-d_\Delta(\vect{y},P_{B_q})} \leq (q+1) \sup_{A \in \mathcal{A}} \abs{\hat{P}_{n}(A)-P(A)}  \to 0 \quad \text{a.s.}
\end{equation*}
\end{proof}

\subsection{Monotonicity of the dispersion measure with respect to the dimension} \label{sm:subsec:monotonicity}

In this subsection we show that the dispersion measure is non-increasing with respect to the projecting dimension if the support of $P$ is contained in the ball $B_{r}(0)$ centered at the origin with radius $r$ and $r$ is sufficiently small.

\begin{proposition} \label{sm:prop_monotonicity}
  Let $r_{H}=1/\sqrt{\pi}$ and $r_{\Delta}=1/(\sqrt{\pi}(m+1))$. If the support of $P$ is contained in $B_{r_{L}}(0)$ for $L \in \{H,\Delta\}$, then $\sigma_L(P) \leq \sigma_L(P_{B_{q}})$ for all dimensions $q \in \{1,\dots,m\}$ and all projection matrices $B_{q}$. In particular,
\begin{align*}
  &\min_{(S_{p^{\prime}}, S_{q^{\prime}}) \in \mathcal{S}_{p^\prime,q^\prime}} \sigma_{L}(P_{B_{q^{\prime}}}) \leq \min_{(S_p, S_q) \in \mathcal{S}_{p,q}} \sigma_{L}(P_{B_{q}}) \text{ and } \\
  &\max_{(S_{p^{\prime}}, S_{q^{\prime}}) \in \mathcal{S}_{p^\prime,q^\prime}} \sigma_{L}(P_{B_{q^{\prime}}}) \leq \max_{(S_p, S_q) \in \mathcal{S}_{p,q}} \sigma_{L}(P_{B_{q}})
\end{align*}
for all $1 \leq q \leq q^{\prime} \leq m$.
\end{proposition}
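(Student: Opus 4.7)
The plan is to establish the pointwise inequality $\sigma_L(P)\le \sigma_L(P_{B_q})$ for every $q\in\{1,\ldots,m\}$, from which both monotonicity statements follow by iteration: since $\|B_{q'}\vect{x}\|\le\|\vect{x}\|$, the pushed-forward measure $P_{B_{q'}}$ is still supported in $B_{r_L}(\vect{0})\subset\mathbb{R}^{q'}$, so the same inequality can be reapplied with $P_{B_{q'}}$ in place of $P$ and with a secondary $q$-dimensional projection chosen to realise the $\min$ (respectively $\max$) over $\mathcal{S}_{p,q}$.

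For the halfspace depth I would start from the elementary inequality $d_H(\vect{x},P)\le d_H(B_q\vect{x},P_{B_q})$, which holds because the infimum defining $d_H(\vect{x},P)$ runs over all of $S^{m-1}$ whereas the infimum for the projected depth only ranges over the subclass $\{B_q^{\top}\vect{v}:\vect{v}\in S^{q-1}\}$. Orthogonally decomposing $\mathbb{R}^m=S_q\oplus S_p$ with $p=m-q$ and writing $\vect{x}=B_q^{\top}\vect{y}+B_p^{\top}\vect{z}$, Fubini combined with the fact that $d_H(\cdot,P)$ vanishes off $\overline{\operatorname{conv}}\,\operatorname{supp}(P)\subset B_{r_H}(\vect{0})$ yields
\begin{equation*}
\sigma_H(P)\le \int_{\mathbb{R}^q} d_H(\vect{y},P_{B_q})\,V_p\!\left(\sqrt{r_H^2-\|\vect{y}\|^2}\right) d\vect{y},
\end{equation*}
where I use the orthogonality identity $\|\vect{x}\|^2=\|\vect{y}\|^2+\|\vect{z}\|^2$ and $V_p(R)$ denotes the volume of a $p$-ball of radius $R$. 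The choice $r_H=1/\sqrt{\pi}$ makes $V_p(r_H)=1/\Gamma(p/2+1)$, so for $p\ge 2$ this fibre weight is at most $1$ and the right-hand side is bounded by $\sigma_H(P_{B_q})$. For the simplicial depth the analogous bound follows from the pointwise inequality $d_\Delta(\vect{x},P)\le(m+1)\,\bP(\|\vect{X}\|\ge\|\vect{x}\|)$ recorded in \eqref{bound_simplicial_depth}; the extra $(m+1)$ factor is precisely what the tighter support requirement $r_\Delta=1/[\sqrt{\pi}(m+1)]$ is designed to absorb.

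The delicate point, and the main obstacle I would expect to handle carefully, is the low-codimension case $p=1$, for which the naive fibre volume $V_1(r_L)$ barely fails the required bound. Here one has to use additional structure, exploiting the quasi-concavity of $d_L$ along the suppressed direction: because the depth restricted to any line through $\vect{y}\in S_q$ is a quasi-concave function supported on a segment of length at most $2\sqrt{r_L^2-\|\vect{y}\|^2}$ and attaining maximum at most $d_L(\vect{y},P_{B_q})$, a sharper integral estimate is available than the crude product bound. Phrased dually via the layer-cake identity $\sigma_L(P)=\int_0^\infty\lambda_m(R_\alpha(P))\,d\alpha$ and Brunn's concavity theorem applied to the convex depth regions $R_\alpha\subset B_{r_L}(\vect{0})$, the sectional volume estimate tightens enough to yield $\lambda_m(R_\alpha(P))\le\lambda_q(R_\alpha(P_{B_q}))$, closing the argument uniformly in $p$.
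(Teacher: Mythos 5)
Your overall strategy for the main inequality --- the pointwise domination $d_H(\vect{x},P)\le d_H(B_q\vect{x},P_{B_q})$, Fubini over the fibres of the projection, and the observation that $V_p(r_H)=1/\Gamma(p/2+1)\le 1$ for $p\ge 2$ --- is exactly the paper's argument (the paper phrases Fubini as the coarea formula), and the iteration yielding the two ``in particular'' inequalities matches the paper's construction of the stacked matrix $\tilde B_{q'}$. However, your simplicial-depth step does not work as written. The bound $d_\Delta(\vect{x},P)\le(m+1)\bP(\norm{\vect{X}}\ge\norm{\vect{x}})$ from \eqref{bound_simplicial_depth} compares the depth to a tail probability, not to the projected depth; integrating it gives only the absolute bound $\sigma_\Delta(P)\le(m+1)\lambda(B_{r_\Delta}(\vect{0}))$, and there is no accompanying lower bound on $\sigma_\Delta(P_{B_q})$ (which can be arbitrarily small, indeed zero for degenerate $P$, under the same support hypothesis), so no comparison $\sigma_\Delta(P)\le\sigma_\Delta(P_{B_q})$ can follow. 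The ingredient you need is combinatorial: if $\vect{x}\in\Delta[\vect{X}_1,\dots,\vect{X}_{m+1}]$ then, by Carath\'eodory, $B_q\vect{x}$ lies in the simplex spanned by some $q+1$ of the projected points, whence $d_\Delta(\vect{x},P)\le\binom{m+1}{q+1}\,d_\Delta(B_q\vect{x},P_{B_q})$; the paper then bounds $\binom{m+1}{q+1}\le(m+1)^{m-q}$, and it is this factor that the smaller radius $r_\Delta=r_H/(m+1)$ is calibrated to absorb fibre by fibre.

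On the case $p=1$: you are right to single it out, since the fibre weight at $\vect{y}=\vect{0}$ is $V_1(r_H)=2/\sqrt{\pi}>1$ (the paper's own proof asserts at this point that the fibre volume is ``bounded above by one,'' which is false precisely when $m-q=1$, e.g.\ the case $m=2$, $q=1$ used throughout the applications). But your proposed repair is not a proof. A quasi-concave function on a segment of length $\ell$ with maximum $M$ can have integral equal to $M\ell$ (take it constant), so quasi-concavity along the suppressed direction recovers only the crude product bound you are trying to improve. Likewise, the sectional inequality $\lambda_m(R_\alpha(P))\le\lambda_q(R_\alpha(P_{B_q}))$ is exactly the statement in need of proof: Brunn's theorem controls the profile of parallel sections of a single convex body, and does not relate $\lambda_m(R_\alpha(P))$ to the measure of the larger set $R_\alpha(P_{B_q})\supseteq B_qR_\alpha(P)$ when the fibres of $R_\alpha(P)$ can have length up to $2/\sqrt{\pi}$. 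As it stands, neither your sketch nor the paper's proof closes the case $q=m-1$; a genuinely sharper estimate of the fibre integrals (or a smaller choice of $r_H$) would be needed there.
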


\begin{proof}
  We first notice that halfspace depth is non-increasing with respect to the projecting dimension $q$. Indeed, we have
\begin{align*}
  d_{H}(\vect{x},P) &= \inf_{\vect{u} \in S^{m-1}} \bP(\vect{X} \in H_{\vect{x},\vect{u}}) \\
  &\leq \inf_{\vect{u} \in S^{q-1}} \bP(B_{q} \vect{X} \in H_{B_{q} \vect{x},\vect{u}}) = d_{H}(B_{q} \vect{x},P_{B_{q}}).
\end{align*}
Using that the support of $P$ is contained in $B_{r_{H}}(0)$ and a change of variable, we obtain
\begin{align*}
  \sigma_{H}(P) &\leq \int_{B_{r_{H}}(0)} d_{H}(B_{q} \vect{x},P_{B_{q}}) \ d\vect{x} \\
  &= \int_{B_{r_{H}}(0)} d_{H}(\vect{x}_{q},P_{B_{q}}) \ d\vect{x},
\end{align*}
where $\vect{x}_{q}$ is given by the first $q$ components of $\vect{x}$. Using the coarea formula we have
\begin{equation*}
  \int_{B_{r_{H}}(0)} d_{H}(\vect{x}_{q},P_{B_{q}}) \ d\vect{x} = \int_{B_{r_{H}}(0)} d_{H}(\vect{x}_{q},P_{B_{q}}) H^{m-q}(B_{\sqrt{r_{H}^{2}-\norm{\vect{x}_{q}}^2}}(0)) \ d\vect{x}_{q},
\end{equation*}
where $\norm{\cdot}$ is the Euclidean norm and the two balls on the right-hand side are in $\mathbb{R}^{q}$ and $\mathbb{R}^{m-q}$, respectively. The volume of the second ball is
\begin{equation*}
  H^{m-q}(B_{\sqrt{r_{H}^{2}-\norm{\vect{x}_{q}}^2}}(0)) = \frac{\pi^{(m-q)/2}}{\Gamma((m-q)/2+1)} (r_{H}^{2}-\norm{\vect{x}_{q}}^2)^{(m-q)/2}.
\end{equation*}
Since this is bounded above by one, we obtain
\begin{equation*}
  \sigma_{H}(P) \leq \int_{B_{r_{H}}(0)} d_{H}(\vect{x}_{q},P_{B_{q}}) \ d\vect{x}_{q} = \sigma_{H}(P_{B_{q}}).
\end{equation*}
Turning to the simplicial depth, using that $\vect{x} \in \Delta[\vect{X}_{1},\dots,\vect{X}_{m+1}]$ implies $B_q \vect{x} \in \Delta[B_q \vect{X}_{i_1},\dots, B_q \vect{X}_{i_{q+1}}]$ for some indices $i_{1}, \dots, i_{q+1} \in \{ 1, \dots, m+1 \}$, we have
\begin{equation*}
  d_{\Delta}(\vect{x},P) \leq { m+1 \choose q+1 } d_{\Delta}(B_{q} \vect{x},P_{B_{q}}).
\end{equation*}
Using that ${ m+1 \choose q+1 } \leq (m+1)^{m-q}$ we obtain as above that $\sigma_{\Delta}(P) \leq \sigma_{\Delta}(P_{B_{q}})$. Finally, let $B_{q}$ be a $q \times m$ matrix with orthonormal row vectors and $C_{q^\prime - q}$ be a $(q^\prime - q) \times m$ matrix such that the matrix
\begin{equation*}
  \tilde{B}_{q^{\prime}} = \begin{pmatrix} B_{q} \\ C_{q^\prime - q} \end{pmatrix}
\end{equation*}  
has again orthonormal rows. Using the first part of the proposition with $P$ replaced by $P_{\tilde{B}_{q^{\prime}}}$, we obtain that $\sigma_{L}(P_{\tilde{B}_{q^{\prime}}}) \leq \sigma_{L}(P_{B_{q}})$. For a minimizer $B_{q^\prime}^{\ast}$ of $B_{q^\prime} \mapsto \sigma_{L}(P_{B_{q^\prime}})$ and a maximizer $B_{q}^{\ast \ast}$ of $B_{q} \mapsto \sigma_{L}(P_{B_{q}})$, it holds that
\begin{equation*}
  \sigma_{L}(P_{B_{q^\prime}^{\ast}}) \leq \sigma_{L}(P_{\tilde{B}_{q^{\prime}}}) \leq \sigma_{L}(P_{B_{q}}) \leq \sigma_{L}(P_{B_{q}^{\ast \ast}}).
\end{equation*}
Since $B_{q}$ and $\tilde{B}_{q^{\prime}}$ are arbitrary, we conclude that
\begin{equation*}
  \sigma_{L}(P_{B_{q^\prime}^{\ast}}) \leq \min_{(S_p, S_q) \in \mathcal{S}_{p,q}} \sigma_{L}(P_{B_{q}}) \text{ and } \max_{(S_{p^{\prime}}, S_{q^{\prime}}) \in \mathcal{S}_{p^\prime,q^\prime}} \sigma_{L}(P_{\tilde{B}_{q^{\prime}}}) \leq \sigma_{L}(P_{B_{q}^{\ast \ast}}).
\end{equation*}
\end{proof}

\subsection{Mixture of bivariate normal distributions with means the vertices of a square} \label{sm:sec:mixture_of_normal_distributions}

In this subsection, we consider a mixture of four bivariate normal distributions with equal weights, variance $\eta^{2} I$, and means the vertices $(1,1)$, $(1,-1)$, $(-1,1)$, and $(-1,-1)$ of a square with side length $2$. We compute the dispersion measure based on the halfspace depth of the projected distribution along the direction $B_{1}(u) = (u, \sqrt{1-u^{2}})$ for all $u \in (-1,1)$. For this purpose, we use that the projected mixture distribution has median zero and the projections of each component have means $\mu_{1}(u) = u+\sqrt{1-u^{2}}$, $\mu_{2}(u) = u-\sqrt{1-u^{2}}$, $\mu_{3}(u) = -\mu_{2}(u)$, $\mu_{4}(u) = -\mu_{1}(u)$, and variance $\eta^{2}$. Using the halfspace depth and the zero median, Proposition 17 of \citet{romanazzi2009} shows that the dispersion measure is equal to the raw absolute moment of the projected distribuion which, in turn, yields
\begin{equation*}
  \sigma(F_{B_{1}(u)}) = \frac{1}{4} \sum_{i=1}^{4} \psi(\mu_{i}(u)),
\end{equation*}
where for $\mu \in \mathbb{R}$
\begin{equation*}
  \psi(\mu) = \mu \left( 2 \Phi \left(\frac{\mu}{\eta} \right) -1 \right) + 2\eta \phi \left( \frac{\mu}{\eta} \right),
\end{equation*}
and $\phi$ and $\Phi$ are the cumulative distribution function and the density function of the standard normal (see (3) of \citet{leone1961}). The function $u \mapsto \sigma(F_{B_{1}(u)})$ is plotted in Figure \ref{figure_dispersion_mixture_normal} of the main paper. Since $\psi^{\prime}(u) = 2 \Phi \left(\frac{\mu}{\eta} \right) -1$ and $\sum_{i=1}^{4} \mu_{i}^{\prime}(u) = 0$, differentiating $\sigma(F_{B_{1}(u)})$ with respect to $u$ yields 
\begin{align*}
  \frac{1}{4} \sum_{i=1}^{4} \psi^{\prime}(\mu_{i}(u)) \mu_{i}^{\prime}(u) &= \frac{1}{2} \sum_{i=1}^{4} \Phi \left(\frac{\mu_{i}(u)}{\eta} \right) \mu_{i}^{\prime}(u) \\
  &= \frac{1}{2} \sum_{i=1}^{2} \left( \Phi \left(\frac{\mu_{i}(u)}{\eta} \right) - \Phi \left(\frac{- \mu_{i}(u)}{\eta} \right) \right) \mu_{i}^{\prime}(u),
\end{align*}
where $\mu_{1}^{\prime}(u) = 1 - u/\sqrt{1-u^{2}}$ and $\mu_{2}^{\prime}(u) = 1 + u/\sqrt{1-u^{2}}$. The derivative of $\sigma(F_{B_{1}(u)})$ is plotted in Figure \ref{figure_dispersion_mixture_normal_derivative}.
\begin{figure}
\centering{    
  \includegraphics[width=0.69\textwidth]{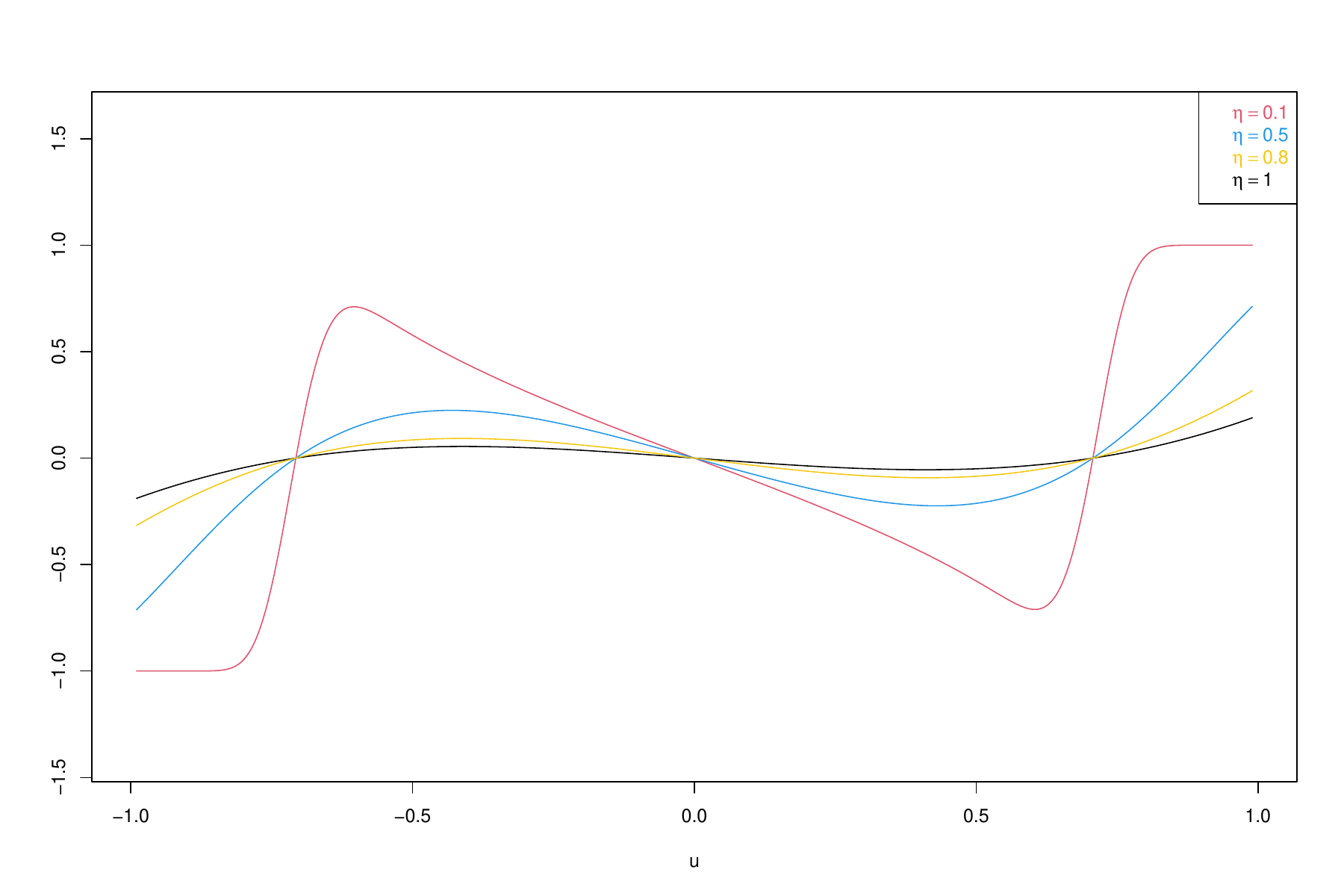}
}
\caption{Mixture of normal distributions. Derivative of the dispersion measure of the projected distribution, as a function of $u \in [-1,1]$, of a mixture of normal distributions with variance $\eta^{2} I$ and means the vertices $(1,1)$, $(1,-1)$, $(-1,1)$, and $(-1,-1)$ of a square. The analysis is performed using halfspace depth.}
\label{figure_dispersion_mixture_normal_derivative}
\end{figure}
One can show that the derivative is zero for $u = -1/\sqrt{2}, 0, 1/\sqrt{2}$, negative for $u \in (-1,-1/\sqrt{2}) \cup (0,1/\sqrt{2})$ and positive for $u \in (-1/\sqrt{2},0) \cup (1/\sqrt{2}, 1)$. Now, for $u=-1,0,1$ the dispersion measure is equal to
\begin{equation} \label{max}
  2 \eta \phi \left( \frac{1}{\eta} \right) + \Phi\left( \frac{1}{\eta} \right) - \Phi \left( -\frac{1}{\eta} \right),
\end{equation}
whereas for $u=-1/\sqrt{2}, 1/\sqrt{2}$ it takes the value
\begin{equation} \label{min}
  \eta \biggl( \phi \biggl( \frac{\sqrt{2}}{\eta} \biggr) + \phi(0) \biggr) + \frac{\sqrt{2}}{2} \biggl( \Phi \biggl( \frac{\sqrt{2}}{\eta} \biggr) - \Phi \biggl( -\frac{\sqrt{2}}{\eta} \biggr) \biggr).
\end{equation}
Since \eqref{min} is strictly smaller than \eqref{max}, the directions $B_{1}(-1) = (-1,0)$, $B_{1}(0) = (0,1)$, and $B_{1}(1) = (1,0)$ yield local maxima, whereas the directions $B_{1}(-1/\sqrt{2}) = (-1/\sqrt{2},1/\sqrt{2})$ and $B_{1}(1/\sqrt{2}) = (1/\sqrt{2},1/\sqrt{2})$ correspond to local minima. This shows that maximization of the dispersion measure yields the directions given by the sides of the square whereas minimization gives the directions given by the diagonals
of the square. The same calculations hold by letting $\eta\downarrow 0$ for the uniform distribution on the vertices of the square.

\section{Further examples}
\label{sm:sec:further_examples}

In Section \ref{sec:data_analysis} of the main paper we have analyzed POD data sets concerning imports in the European Union (UE) of a product to P from an origin (O) to a destination (D). We provide here the analysis of two further POD data sets, POD 30 and POD 54, see Figure \ref{sm:figure_pod_halfspace}.
\begin{figure}
\centering{  
  \includegraphics[width=0.32\textwidth]{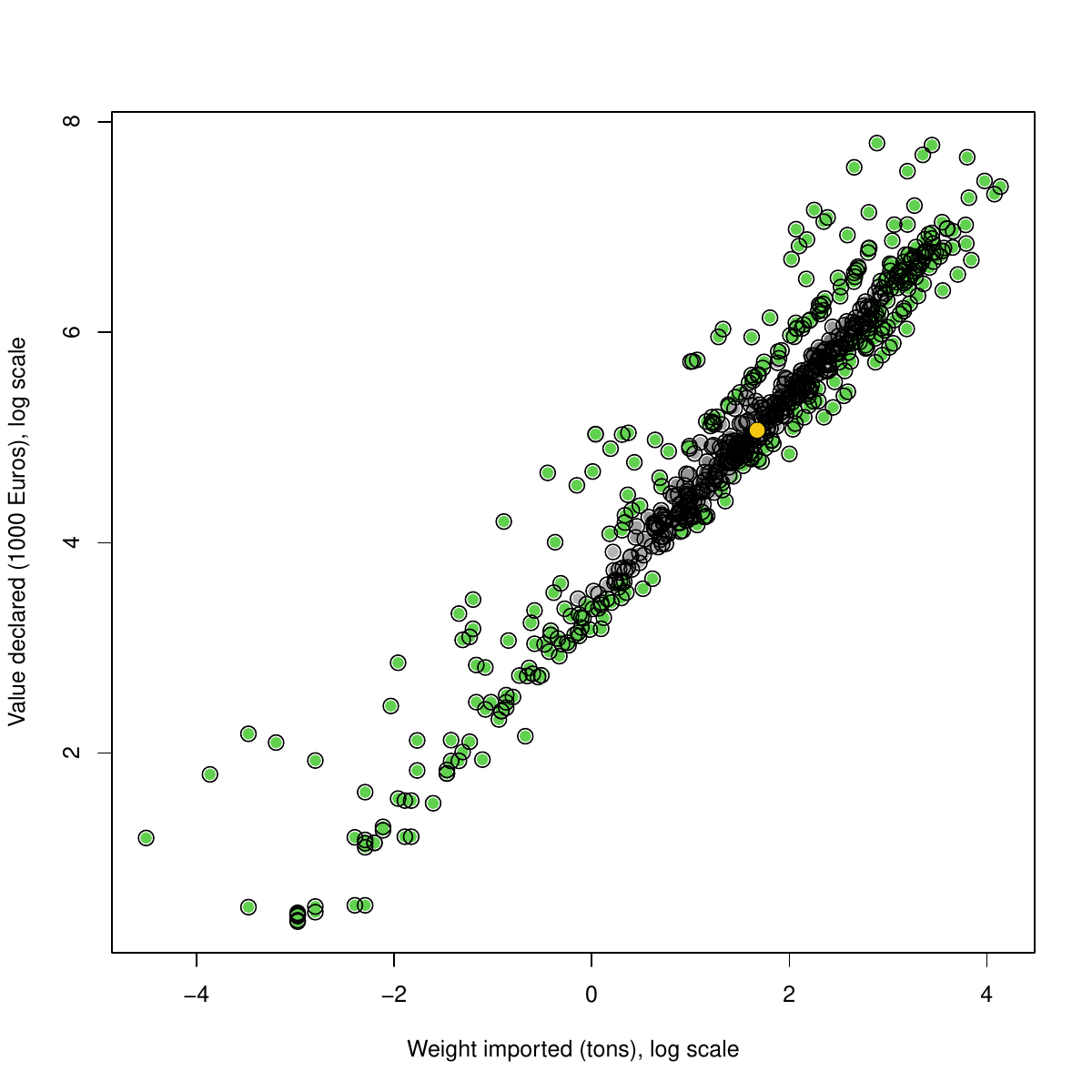}
  \includegraphics[width=0.32\textwidth]{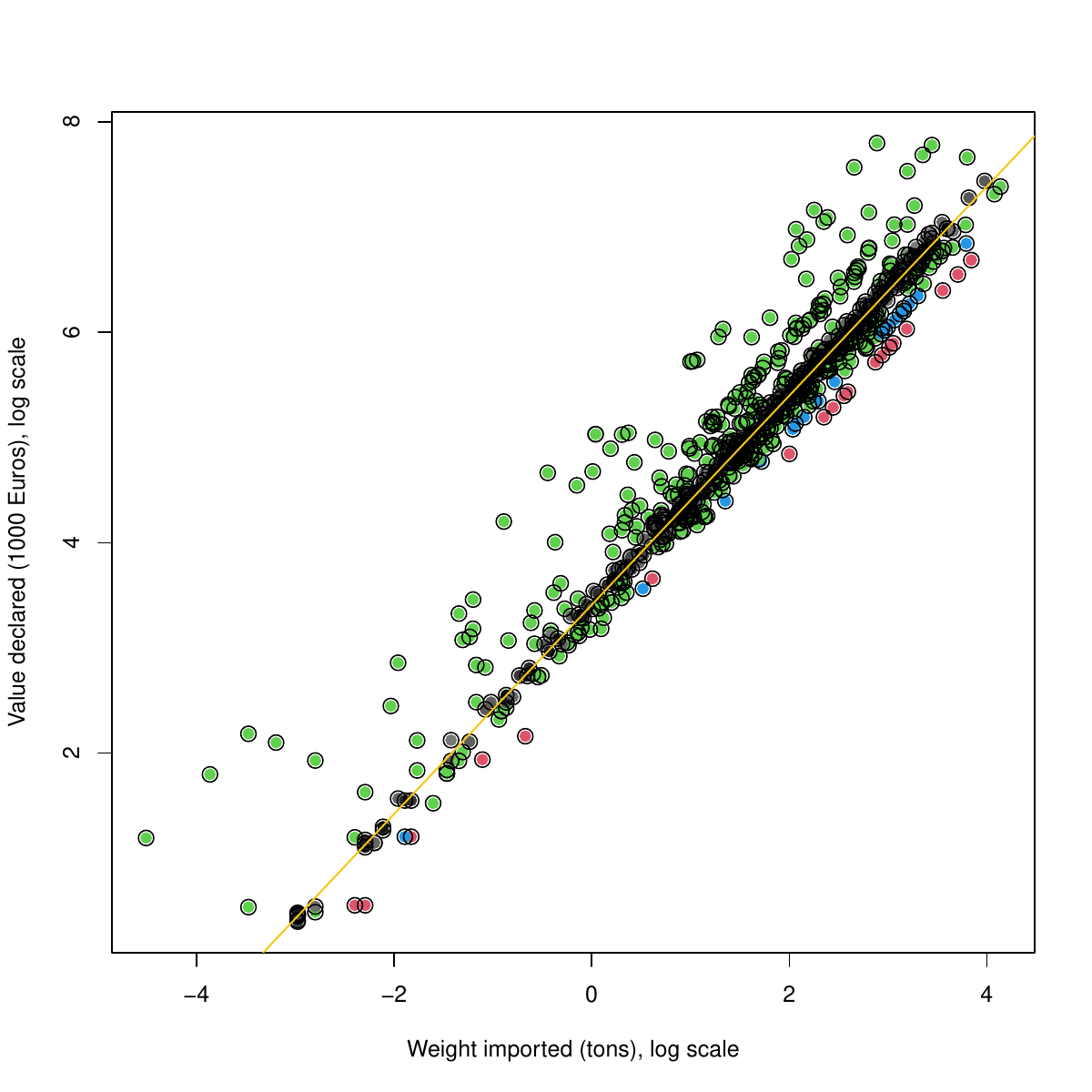}
  \includegraphics[width=0.32\textwidth]{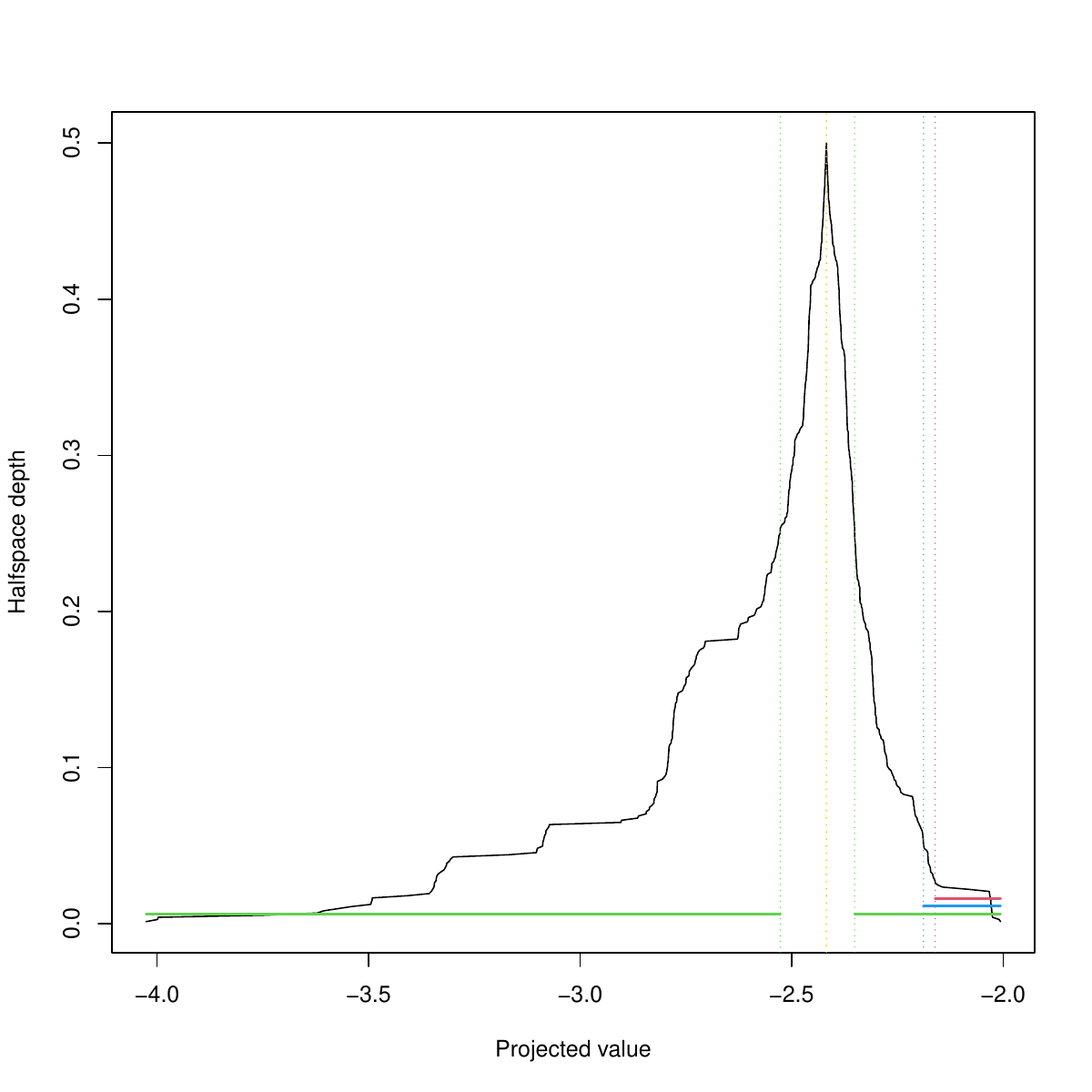}
  \includegraphics[width=0.32\textwidth]{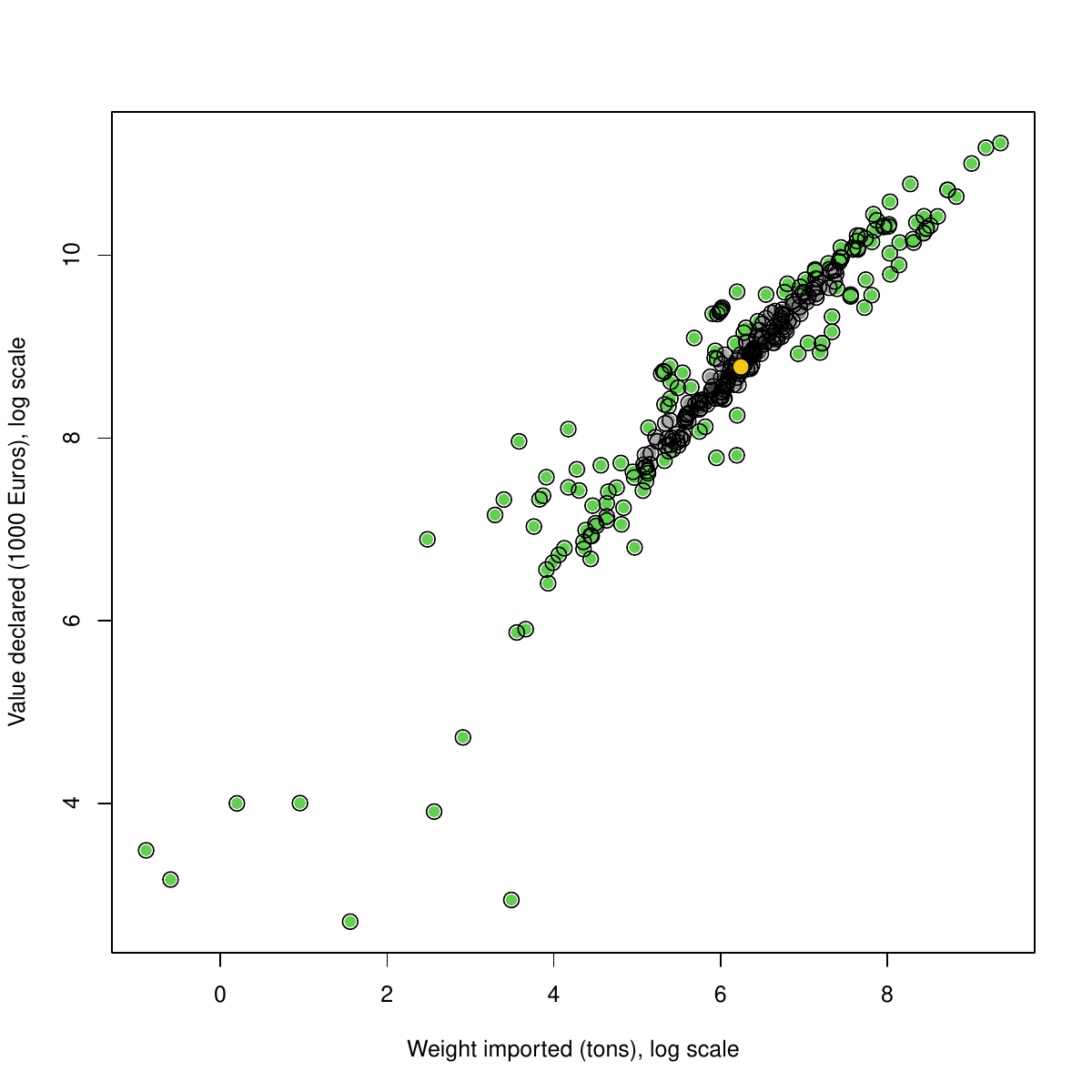}
  \includegraphics[width=0.32\textwidth]{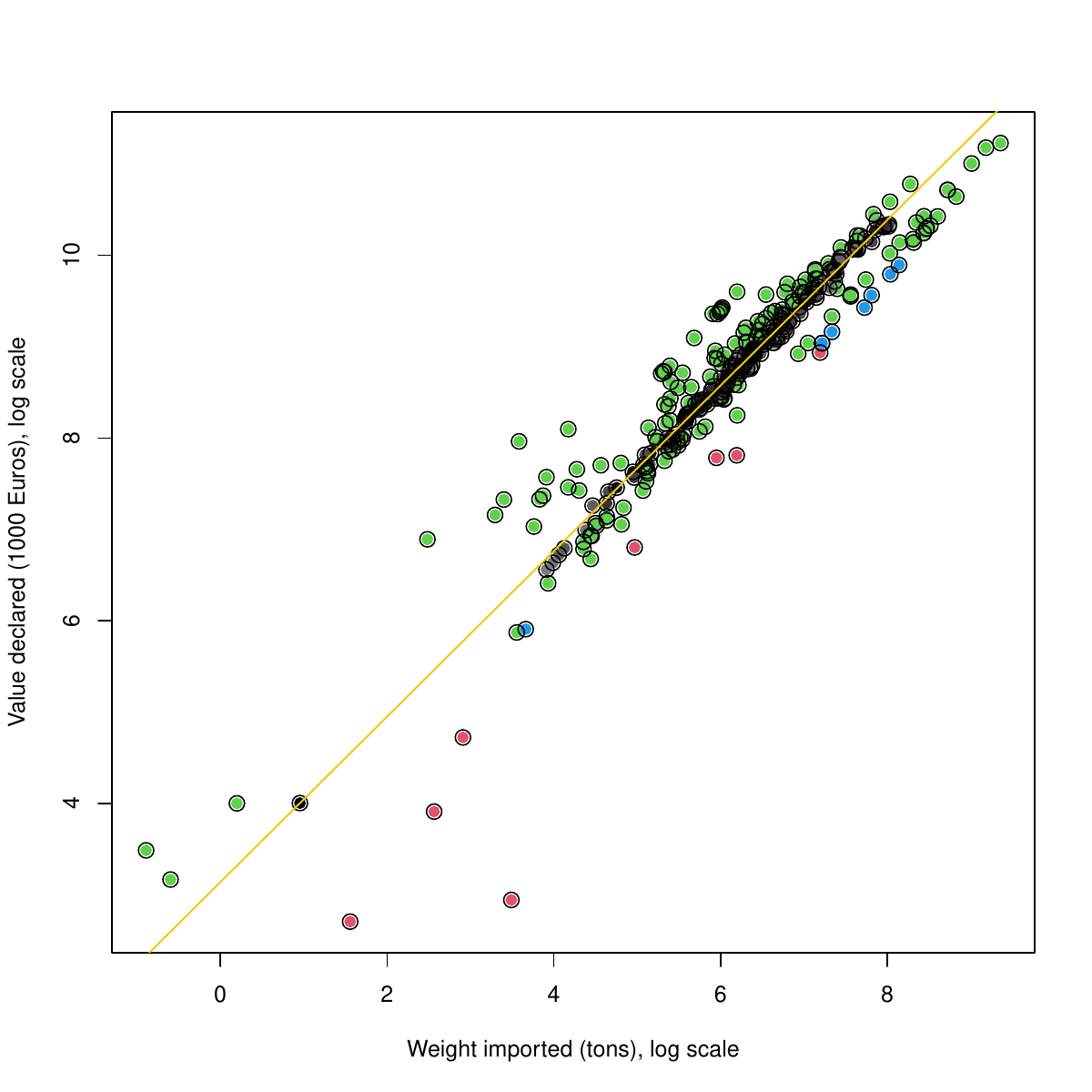}
  \includegraphics[width=0.32\textwidth]{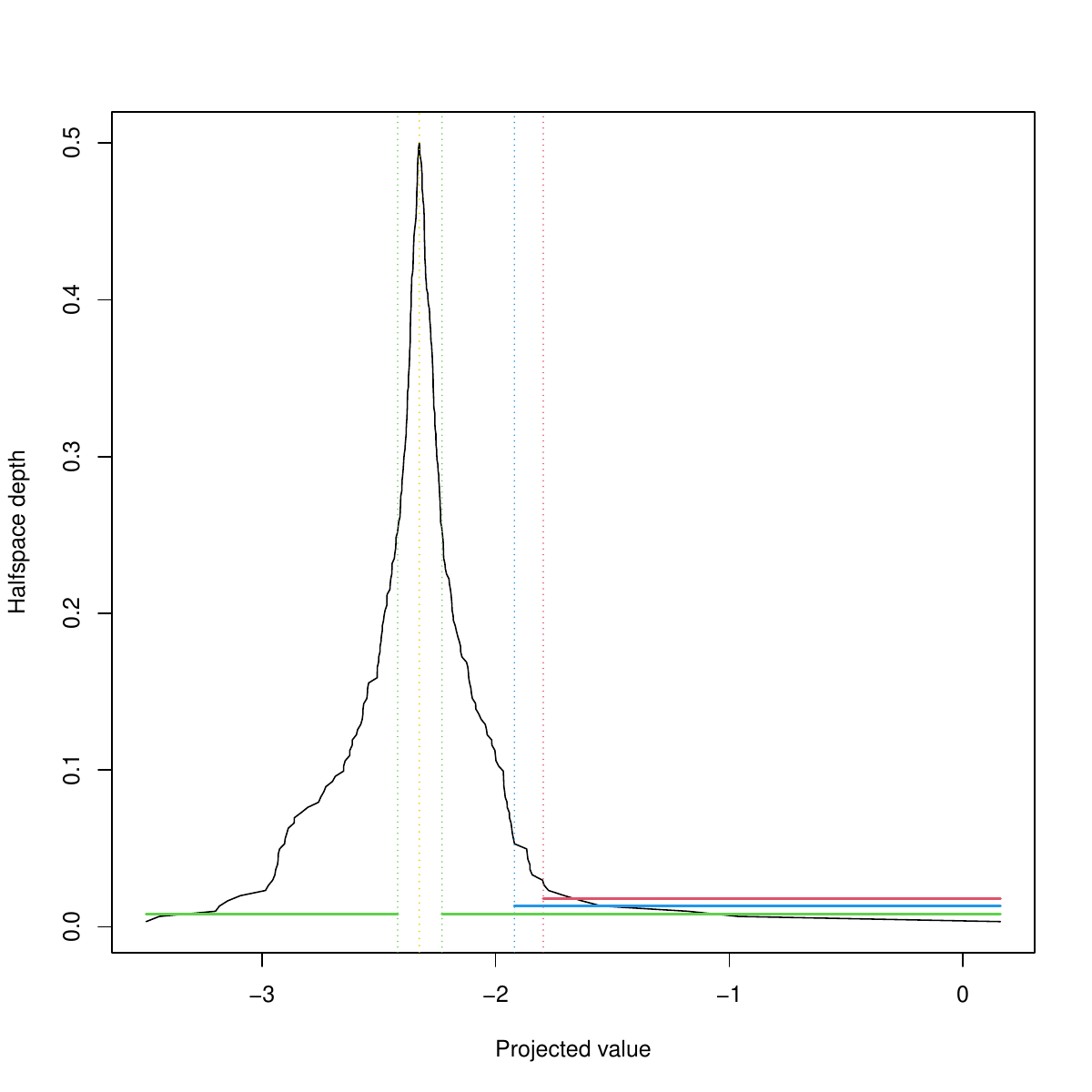}
}
\caption{Product, origin and destination (POD) data. Weights and prices in log scale. POD 30, first row and POD 54, second row. Data depth (on the left), central subspace data depth (on the center) and depth of projected values (on the right). The analysis is performed using halfspace depth.}
\label{sm:figure_pod_halfspace}
\end{figure}
The plots on the left show the depth values of the two data sets in log-scale. The data point of higher depth is shown in yellow, while the $0.5$ points of lower depth are in green. The $50\%$ data points with higher depth are colored in gray scale according to their depth (the darker the higher depth). The plots in the middle show the central subspace depth. Since, in this example, $m=2$ and $q=1$, the remaining dimension ($p=m-q$) is univariate and we can distinguish between lower and upper quantiles. The central straight line is in yellow, the points with quantile order below $0.25$ and between $0.75$ and $0.95$ are in green. Furthermore, the outliers with quantiles order between $0.95$ and $0.975$ are in blue while the outliers with order higher than $0.975$ are in red. The plots on the right shows the depth values of the projected data set and the corresponding quantiles. Both data sets have a strong center with $50\%$ of the projected data points very close to each other (see the region between the two green quantiles). The left tail is heavier, although POD 54 contains a few points far away in the right tail. Similar results are obtained using the simplicial depth for all the analyzed POD as shown in Figure \ref{sm:figure_pod_simplicial}. Finally, the analysis performed on the fishery data set using the simplicial depth is reported in Figure \ref{sm:figure_fishery_simplicial}. 

\begin{figure}
\centering{  
  \includegraphics[width=0.32\textwidth]{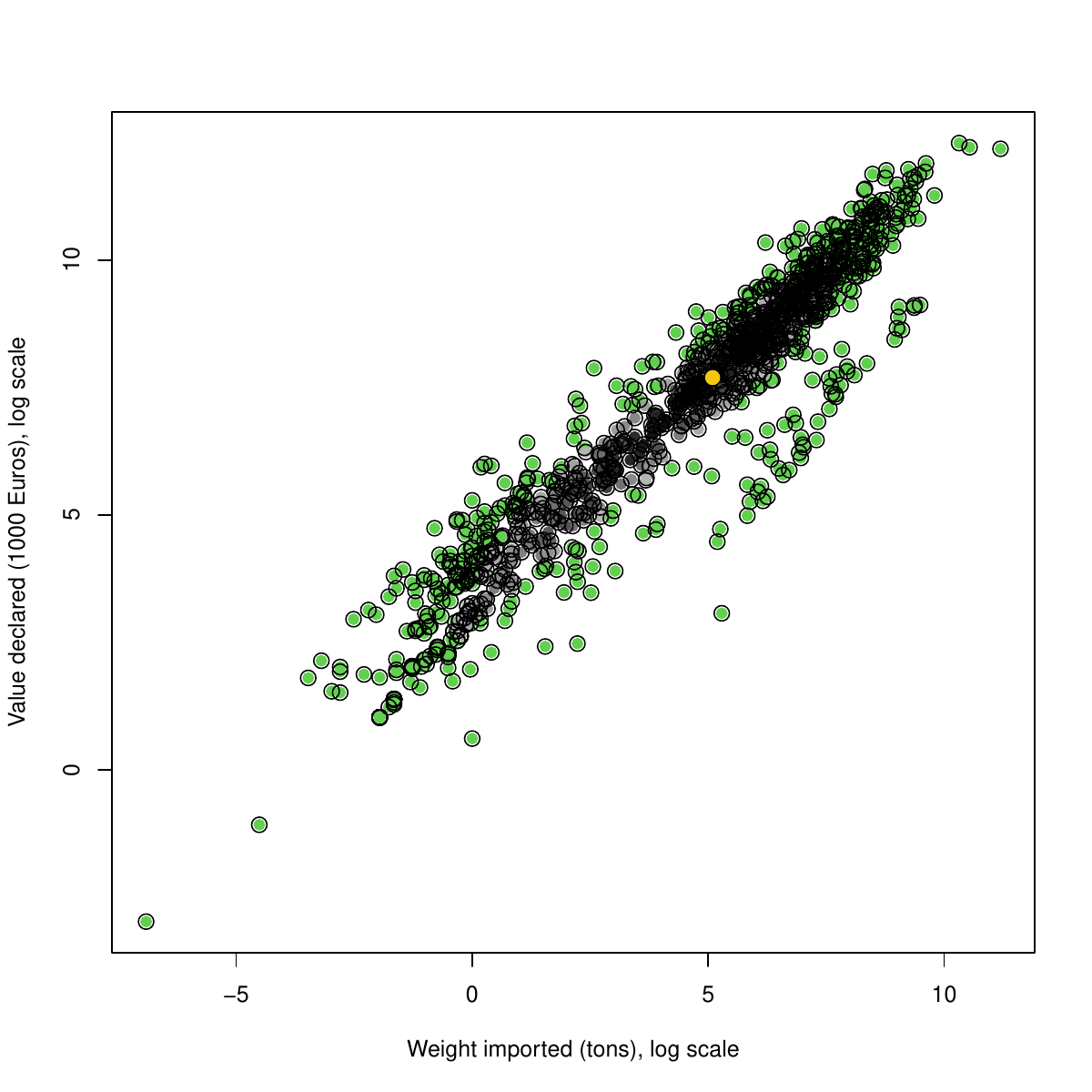}
  \includegraphics[width=0.32\textwidth]{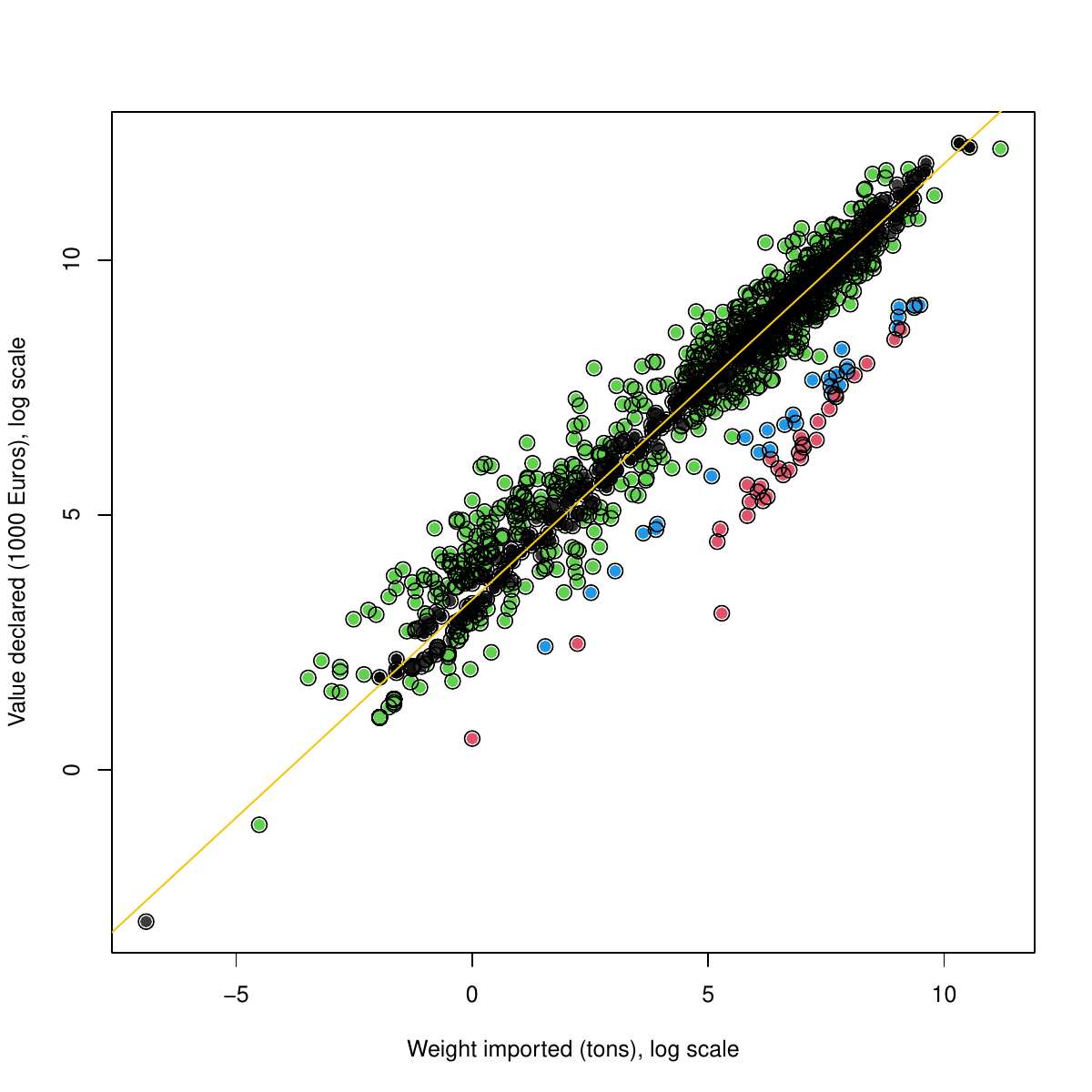}
  \includegraphics[width=0.32\textwidth]{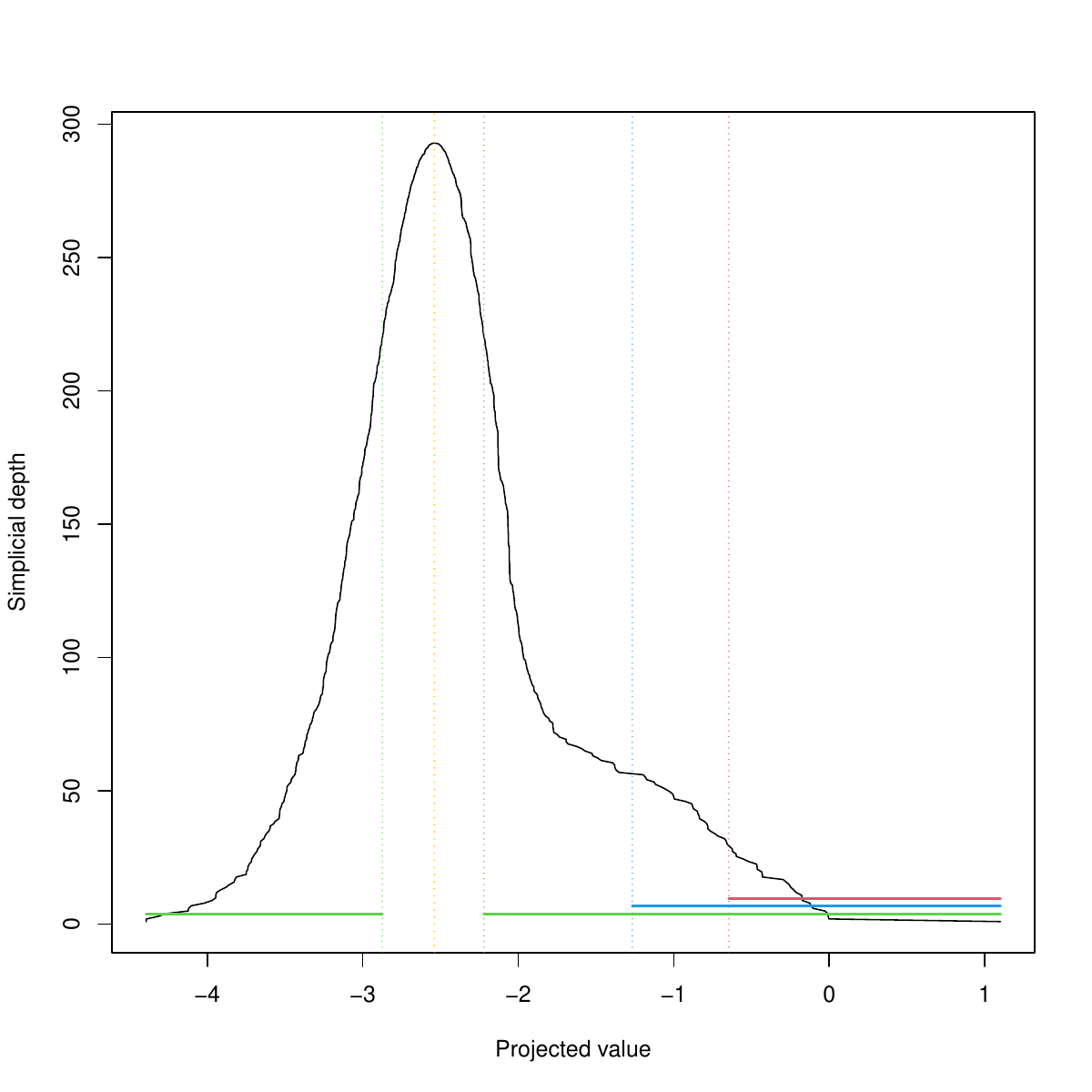}
  \includegraphics[width=0.32\textwidth]{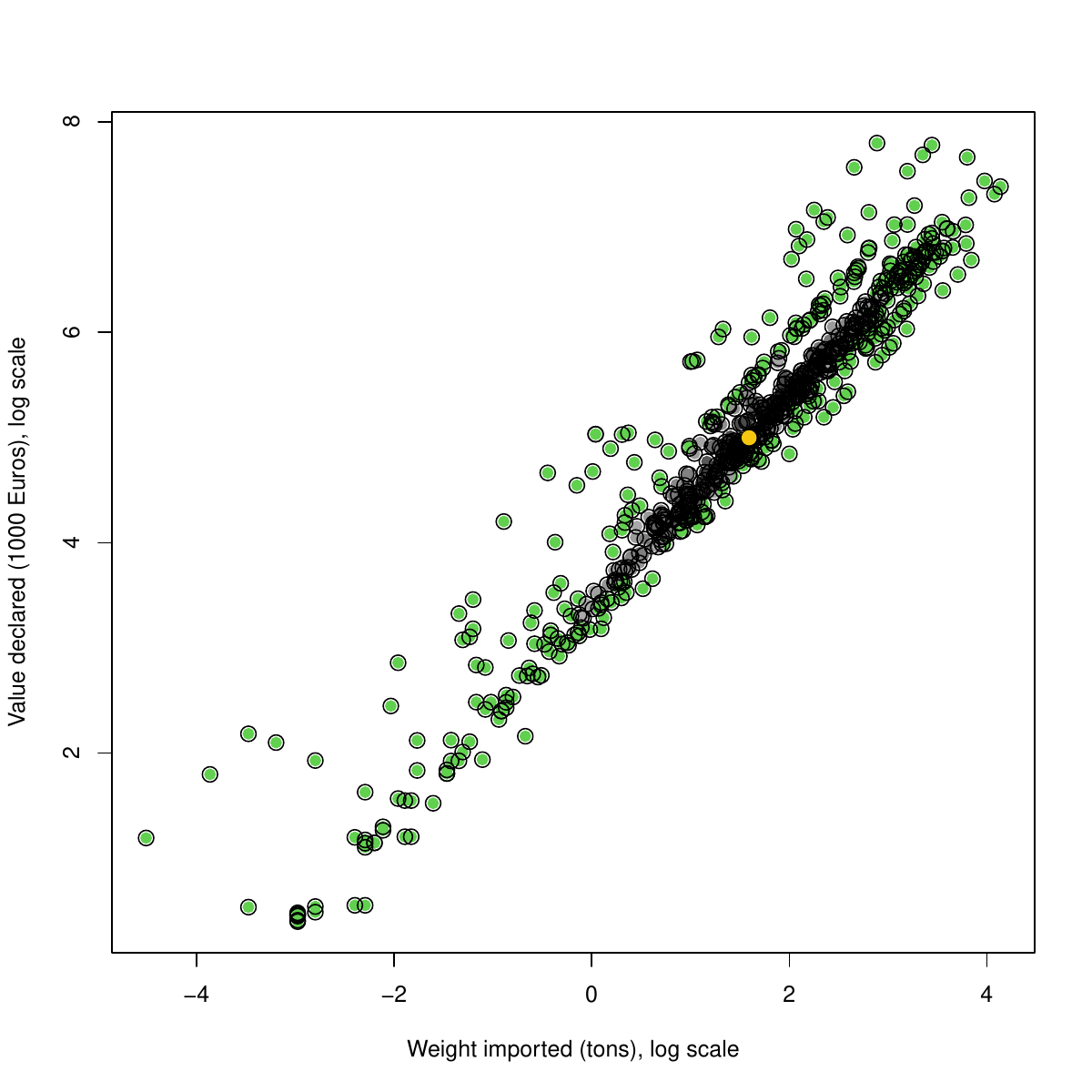}
  \includegraphics[width=0.32\textwidth]{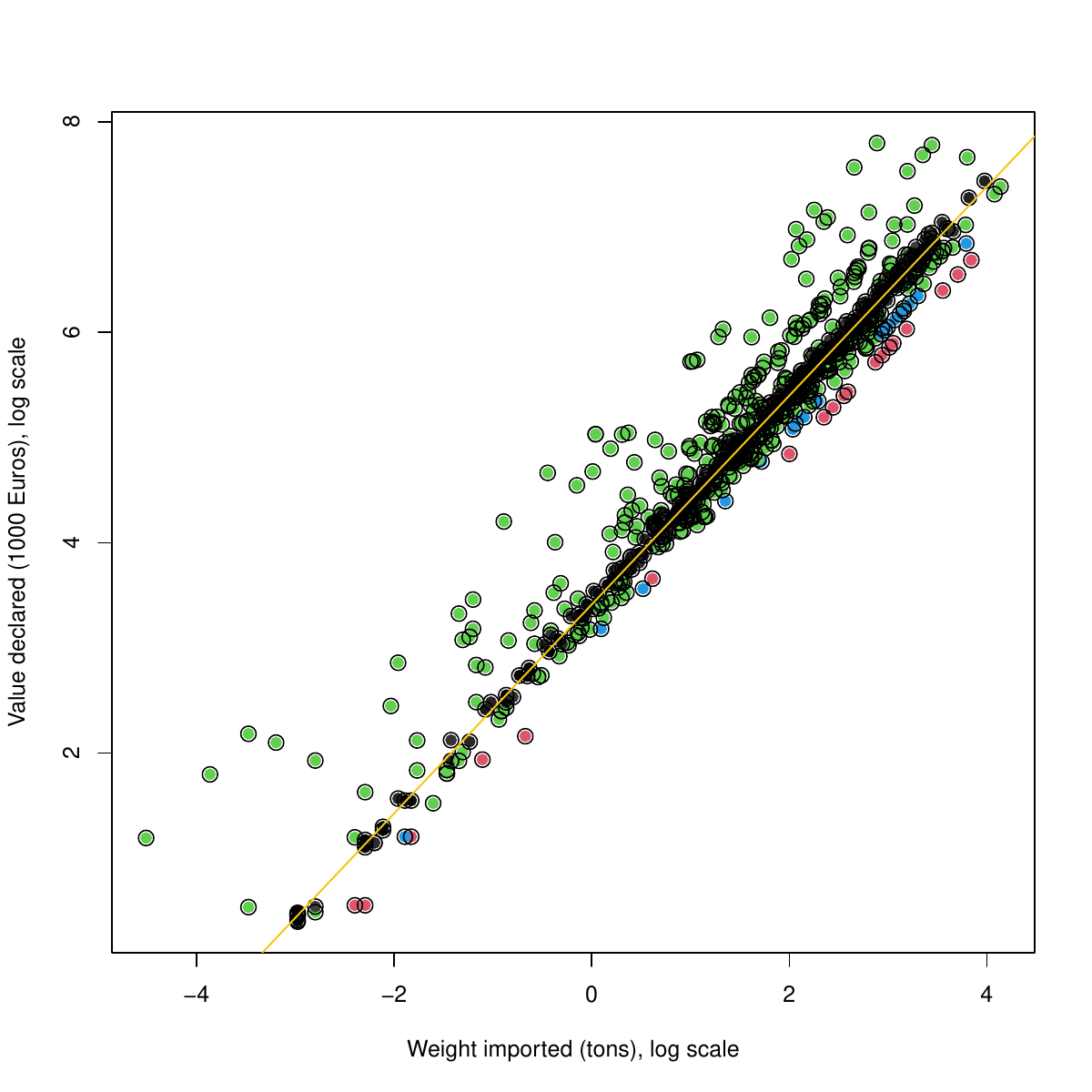}
  \includegraphics[width=0.32\textwidth]{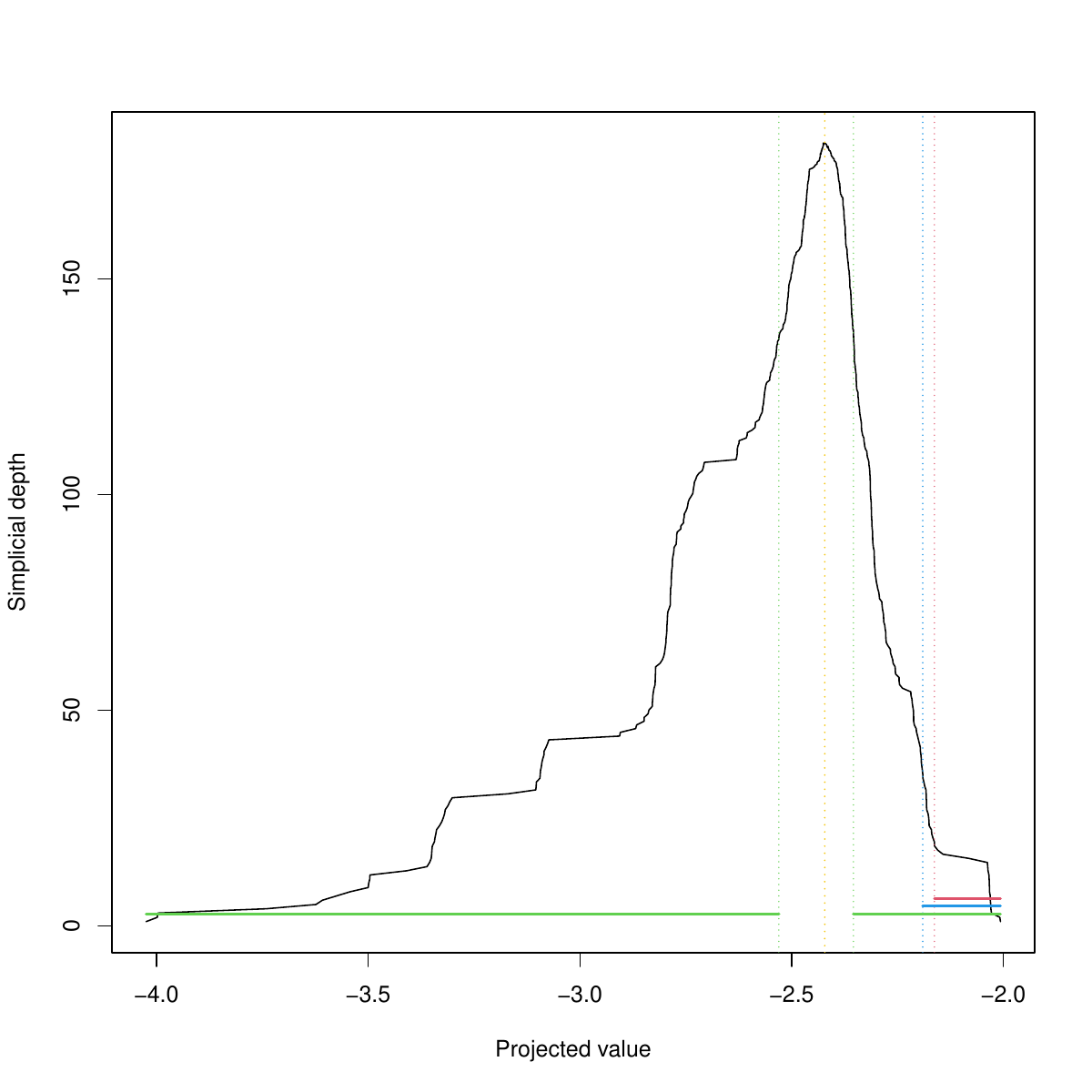}
  \includegraphics[width=0.32\textwidth]{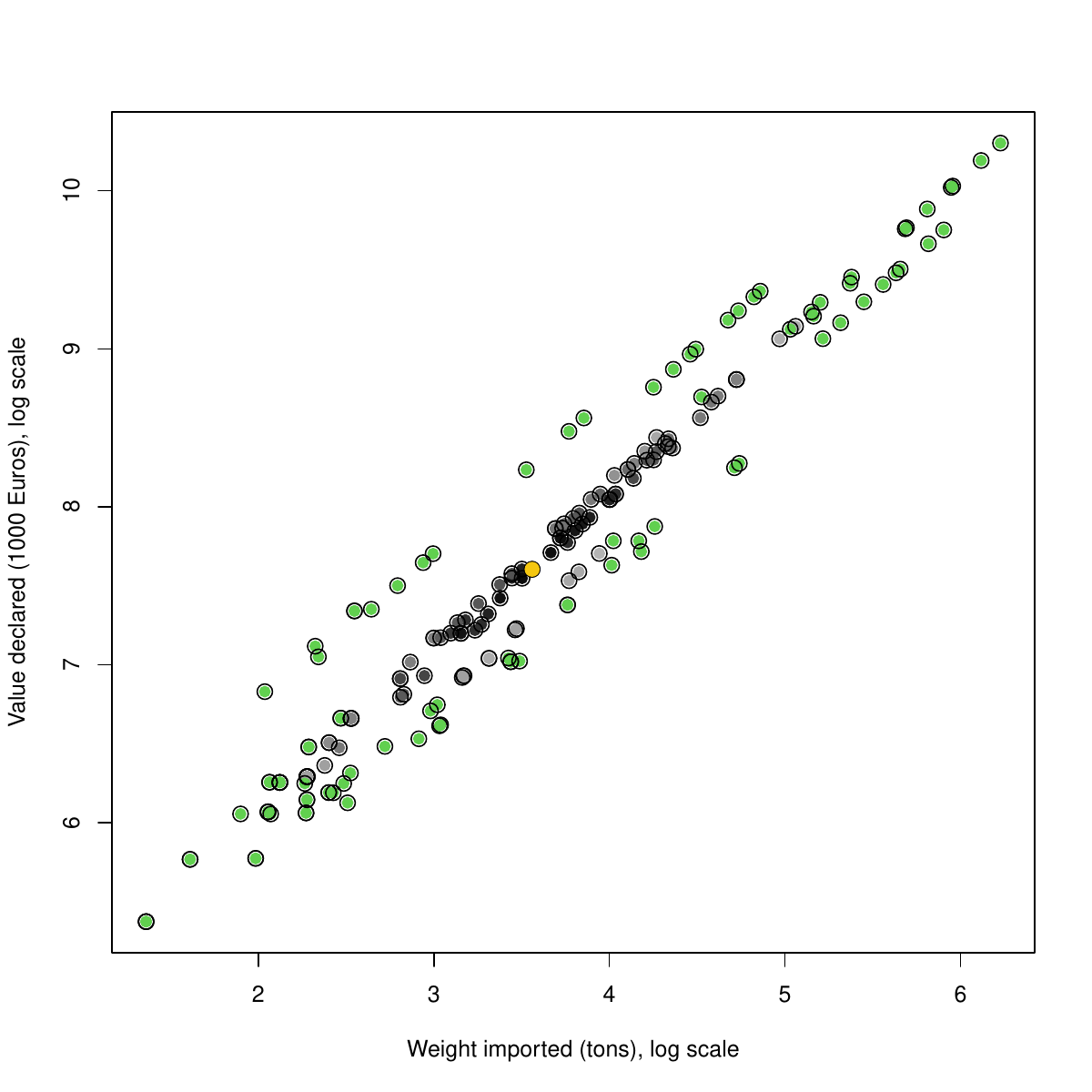}
  \includegraphics[width=0.32\textwidth]{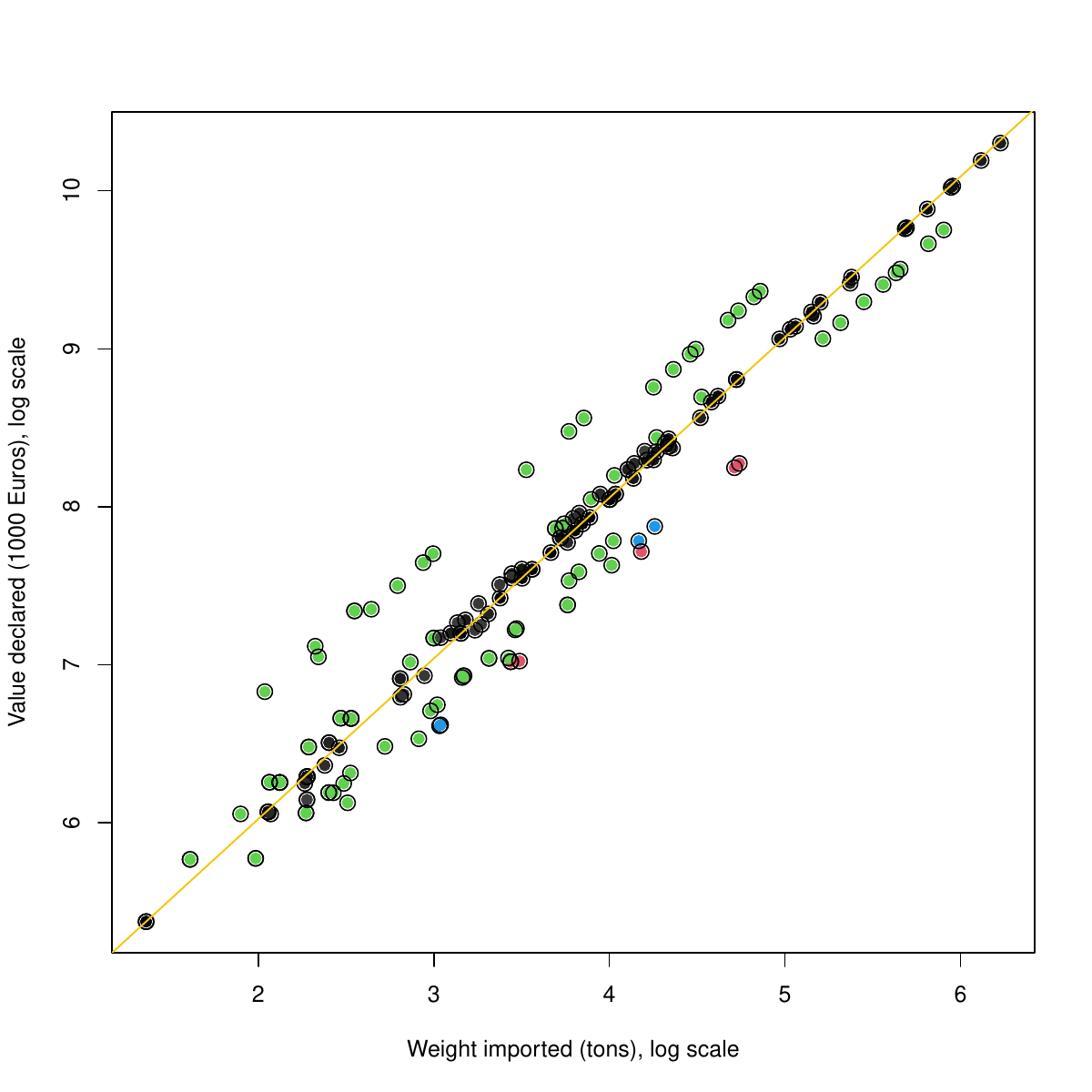}
  \includegraphics[width=0.32\textwidth]{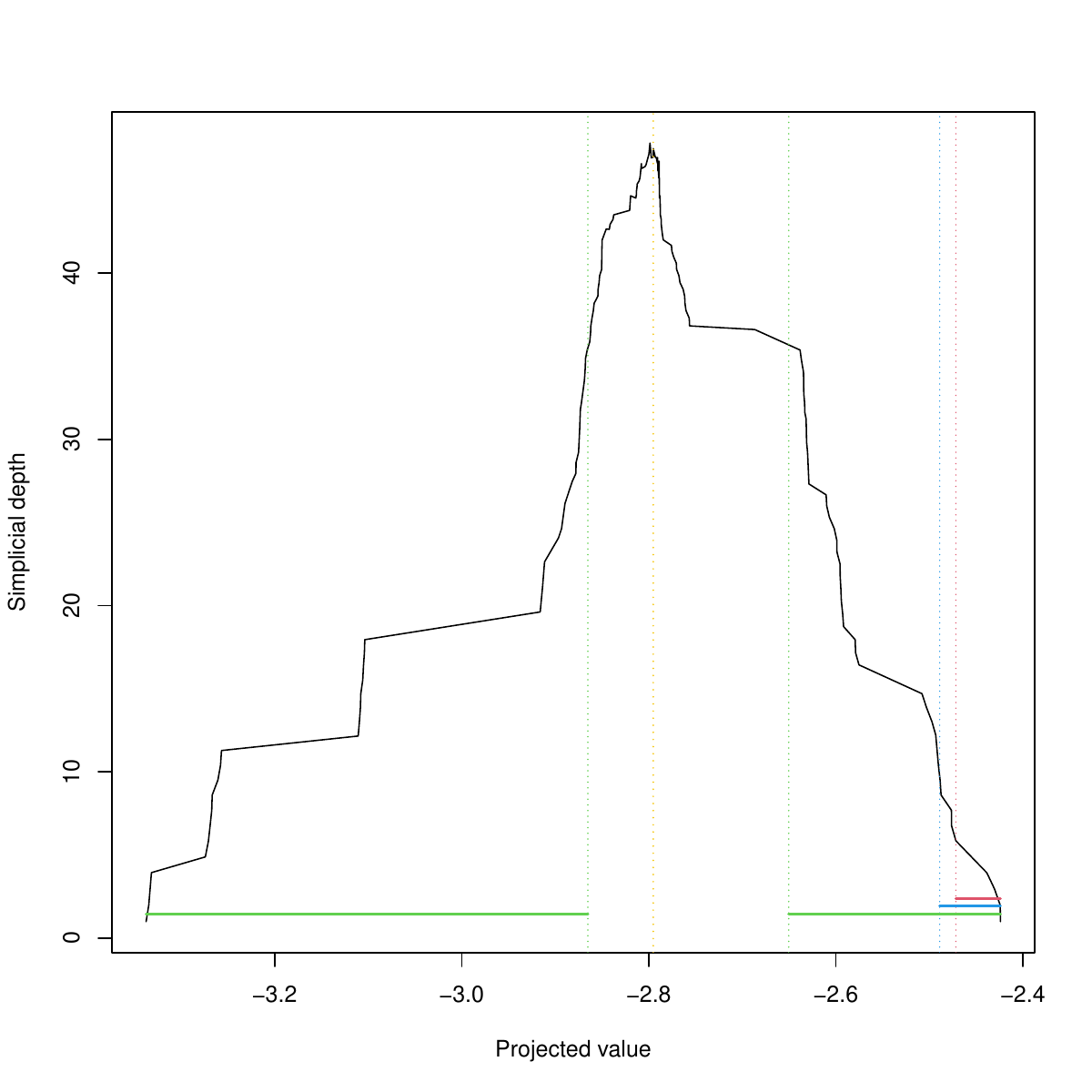}
  \includegraphics[width=0.32\textwidth]{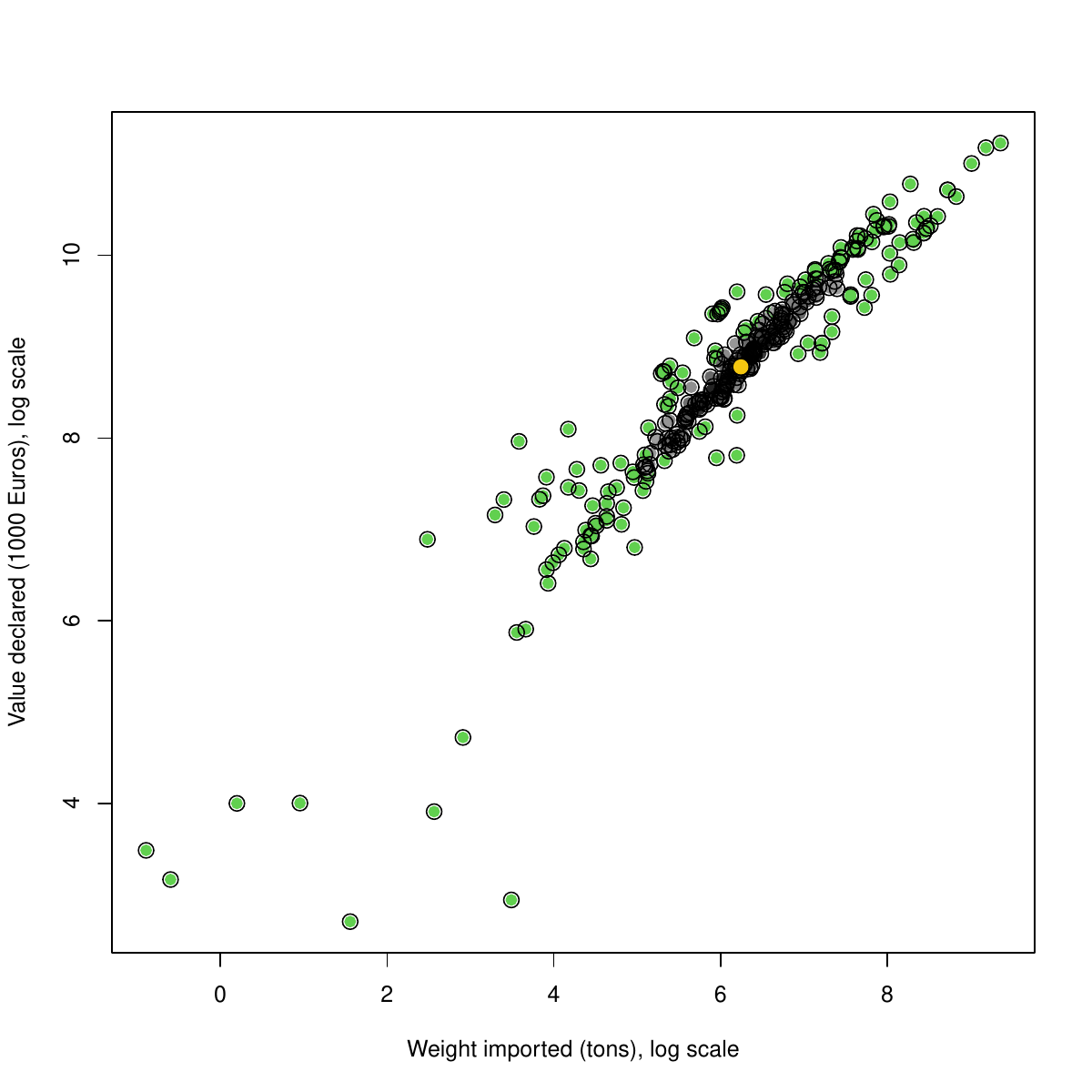}
  \includegraphics[width=0.32\textwidth]{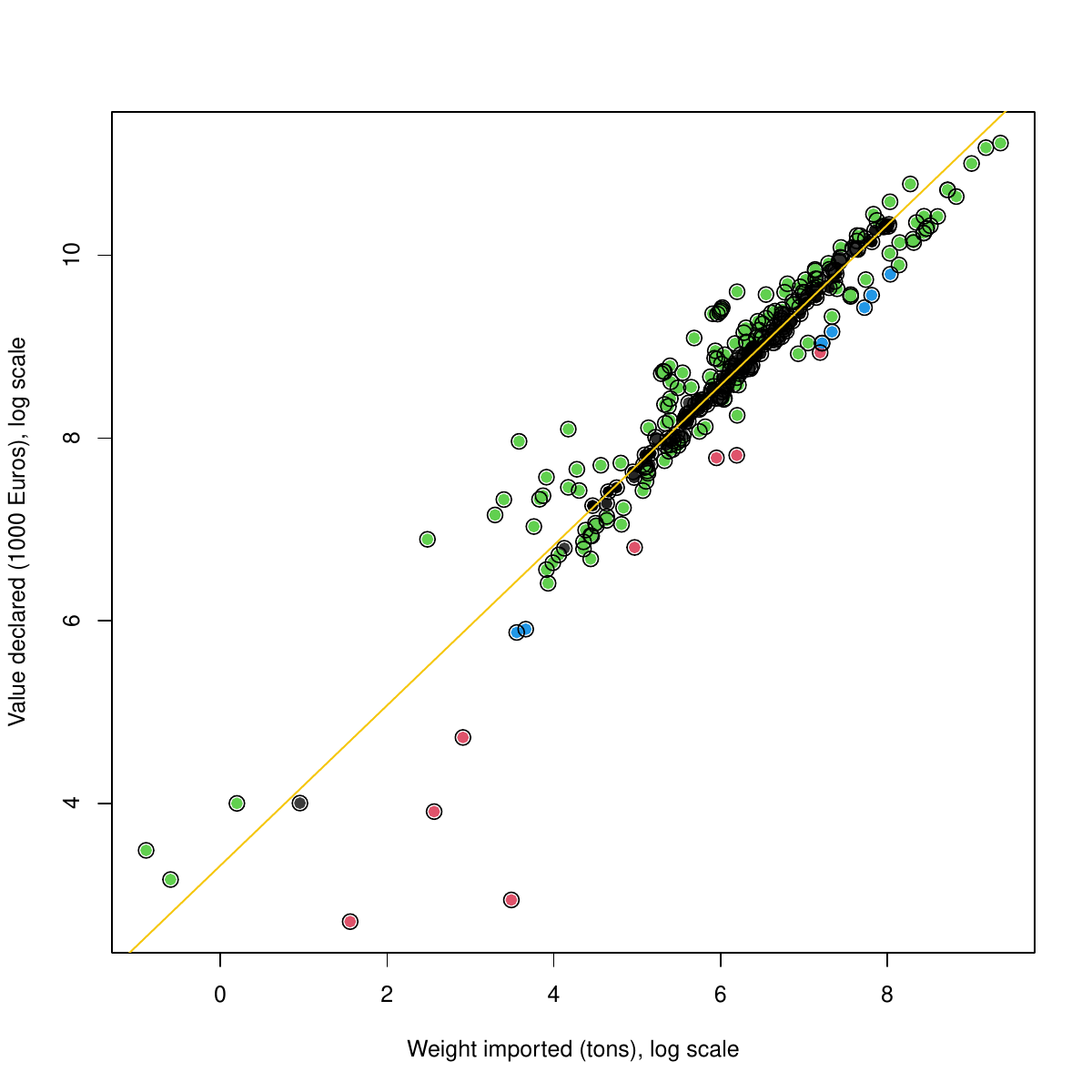}
  \includegraphics[width=0.32\textwidth]{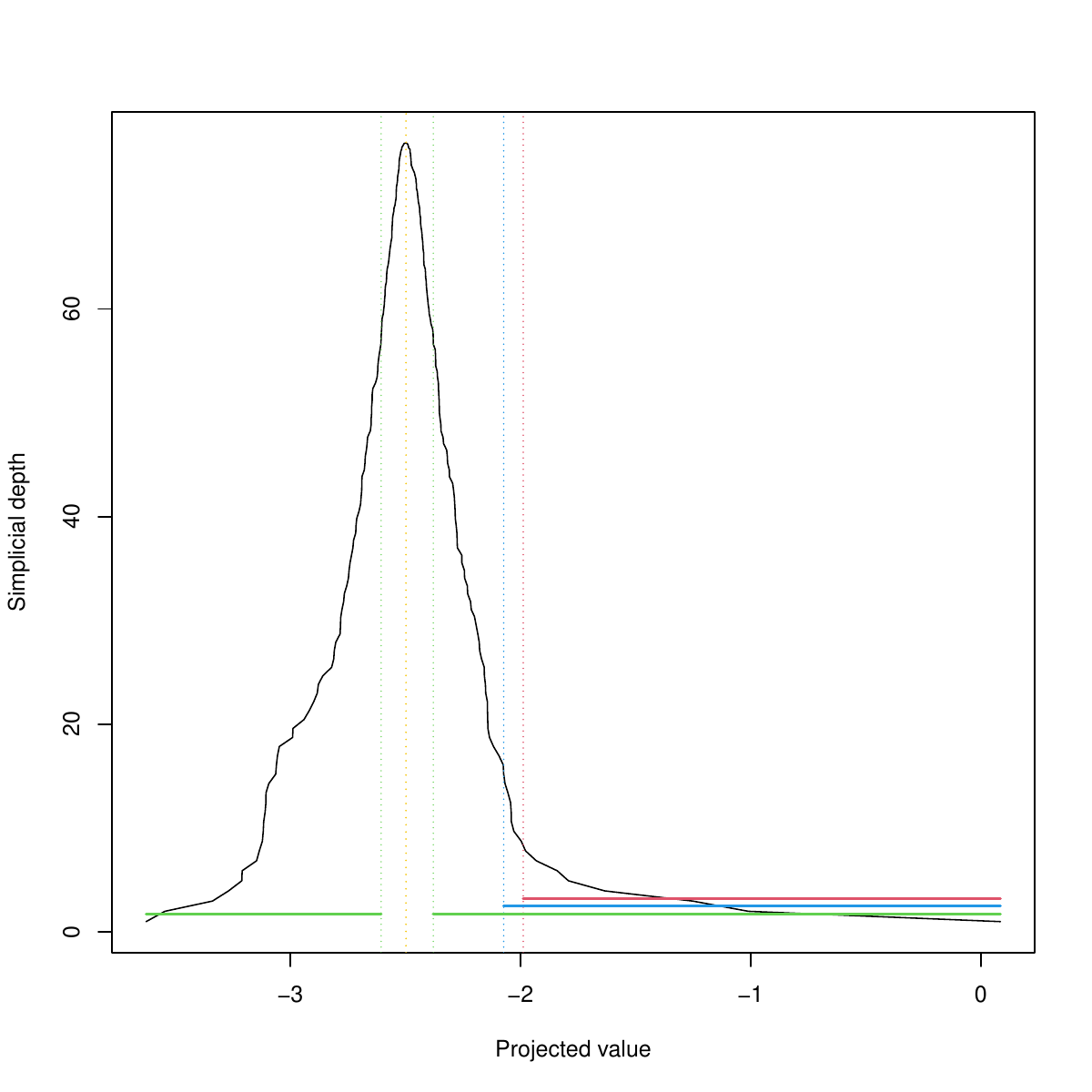}
}
\caption{Product, origin and destination (POD) data. Weights and prices in log scale. POD 19, first row, POD 30, second row, POD 33 third row and POD 54 last row. Data depth (on the left), central subspace data depth (on the center) and depth of projected values (on the right). The analysis is performed using simplicial depth.}
\label{sm:figure_pod_simplicial}
\end{figure}

\begin{figure}
\centering{  
  \includegraphics[width=0.49\textwidth]{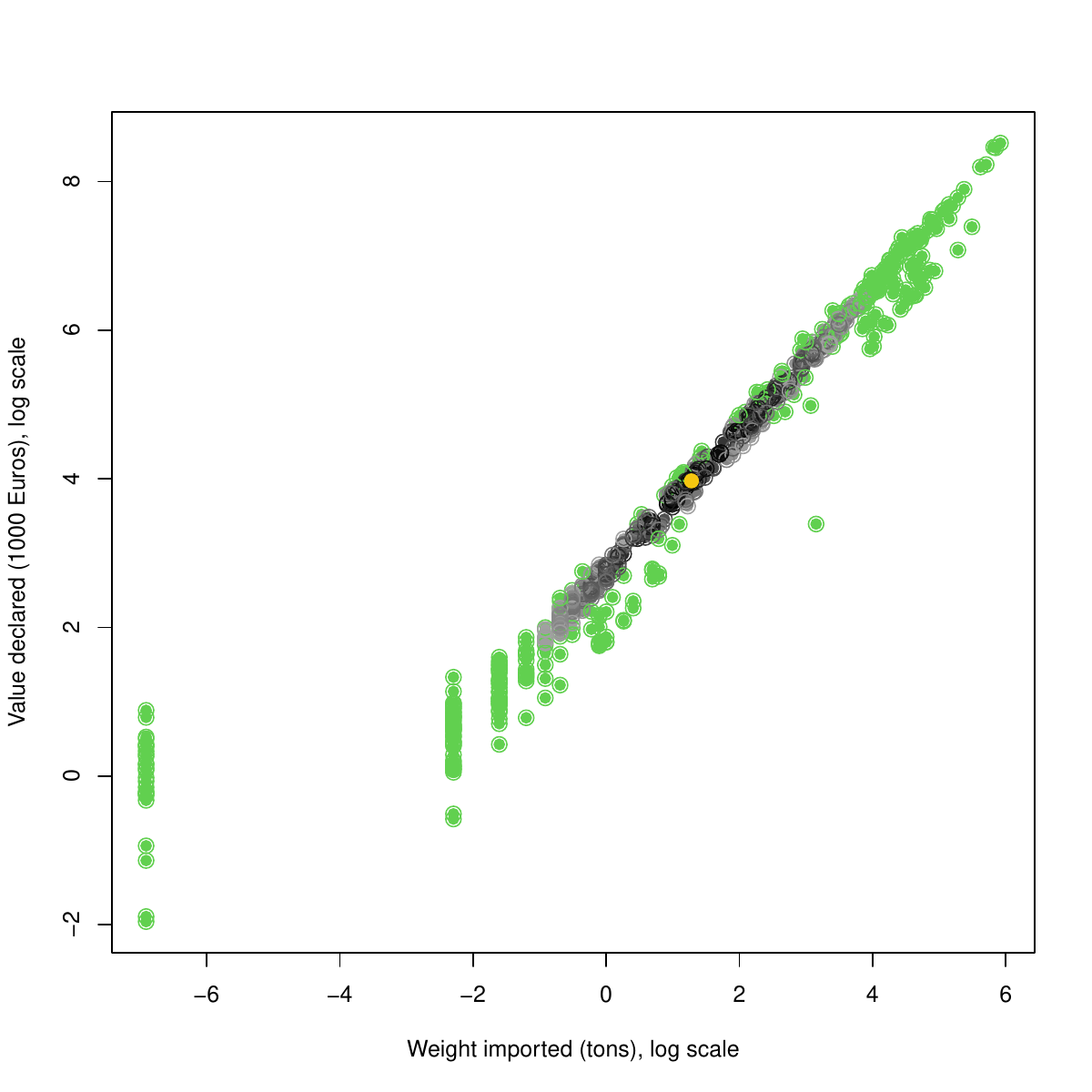}
  \includegraphics[width=0.49\textwidth]{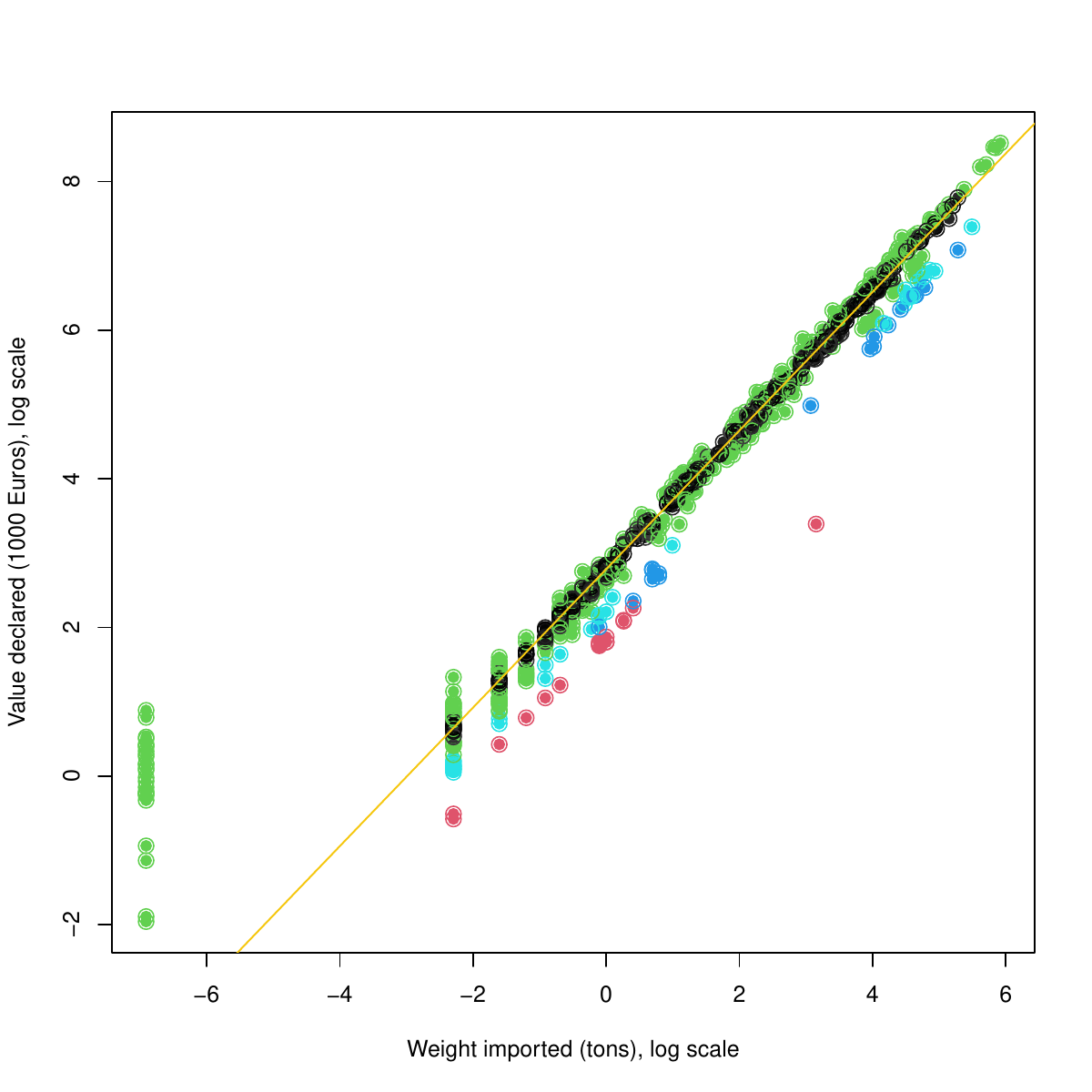}
  \includegraphics[width=0.49\textwidth]{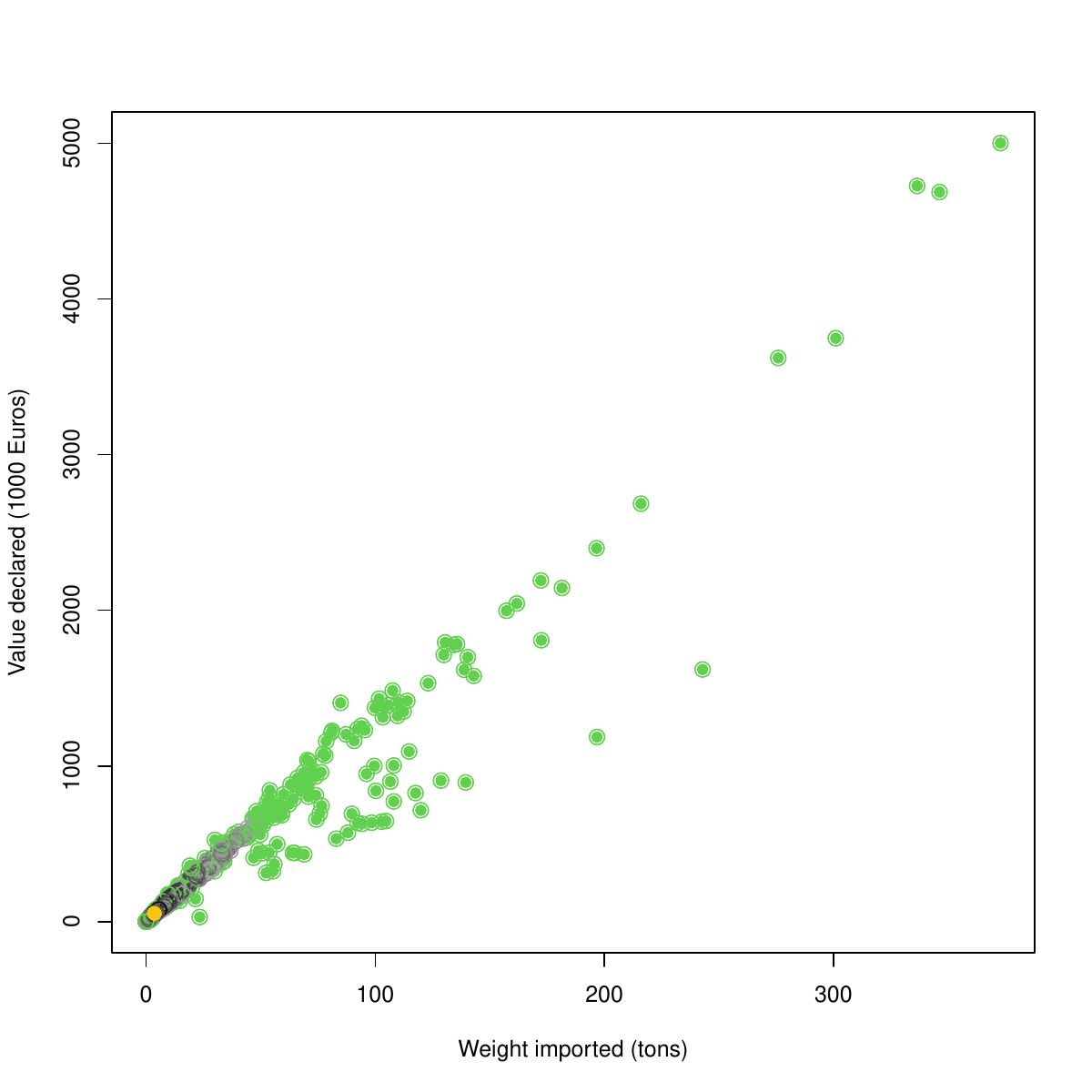}
  \includegraphics[width=0.49\textwidth]{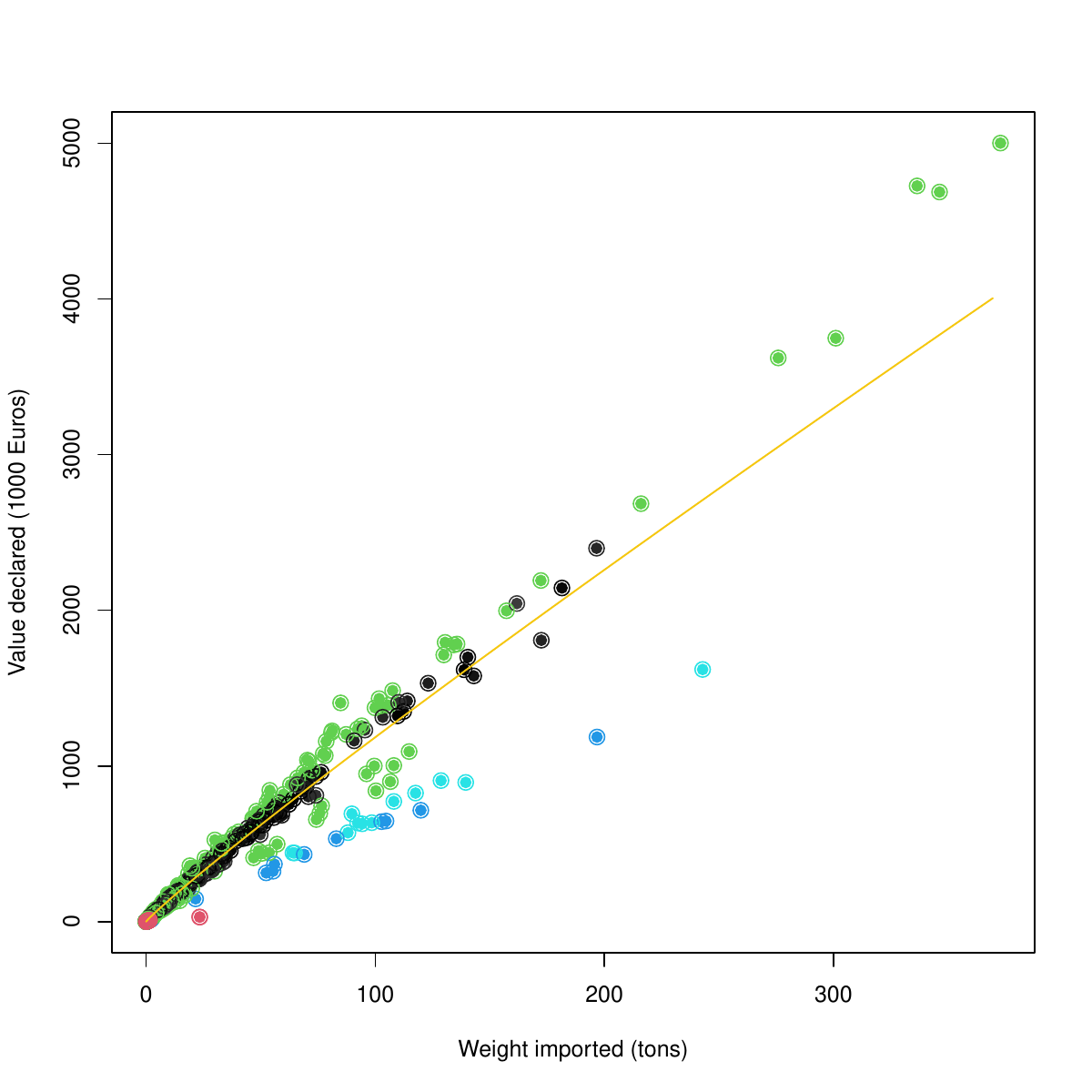}
}
\caption{Fishery data set. In the first row weights and prices are reported in log scale. In the second row the results are back-transformed in the original scale. Data depth (on the left) and central subspace data depth (on the right). The analysis is performed using simplicial depth.}
\label{sm:figure_fishery_simplicial}
\end{figure}

\end{document}